\definecolor{refkey}{rgb}{0.1, 0.2, 1}  
\definecolor{labelkey}{rgb}{0.1,1,0.2}
\newtheorem{theorem}{Theorem}[section]
\newtheorem{lemma}[theorem]{Lemma}
\newtheorem{remark}[theorem]{Remark}
\newtheorem{definition}[theorem]{Definition}
\newcommand{\abs}[1]{\left| #1 \right|}
\def\implies{\Longrightarrow}
\def\bel{\begin{equation}\label}
\def\eeq{\end{equation}}
\def\bega{\begin{array}}
\def\enda{\end{array}}
\def\v{\vskip 1em}
\def\ve{\varepsilon}
\def\R{{\mathbb R}}
\def\O{\mathcal{O}}
\def\P{\mathcal{P}}
\def\dx{\Delta x}
\def\eps{\epsilon}
\def\C{\mathcal{C}}
\def\Hat{\widehat}
\def\cP{\mathcal{P}}
\def\cQ{\mathcal{Q}}
\def\cA{\mathcal{A}}
\title{Traveling Waves for Nonlocal Models of Traffic Flow}
\author{Johanna Ridder and Wen Shen \\ Mathematics Department, Pennsylvania State University \\ Emails: jur436@psu.edu and wxs27@psu.edu}
\begin{document}

\maketitle

\begin{abstract}
We consider several non-local models for traffic flow, 
including both microscopic ODE models and macroscopic PDE models. 
The ODE models describe the movement of individual cars,
where each driver adjusts the speed according to the road condition over 
an interval in the front of the car. 
These models are known as the FtLs (Follow-the-Leaders) models. 
The corresponding PDE models, describing the evolution for the density of cars, 
are conservation laws with non-local flux functions. 
For both types of models, we study stationary traveling wave profiles and
stationary discrete traveling wave profiles.
(See definitions~\ref{def:TW} and~\ref{def:DTW}, respectively.)
We derive delay differential equations satisfied by the profiles for the FtLs models,
and delay integro-differential equations for the traveling waves of the 
nonlocal PDE models. 
The existence and uniqueness (up to horizontal shifts) of the stationary traveling wave
profiles are established. 
Furthermore,  we show that 
the traveling wave profiles are time asymptotic limits for the corresponding Cauchy problems,
under mild assumptions on the smooth initial condition. 
\end{abstract}

\textbf{2010 MSC:} Primary: 35L02, 35L65; Secondary: 34B99, 35Q99.

\textbf{Keywords:} traffic flow, nonlocal models, traveling waves, microscopic models, delay integro-differential equation, local stability.

\section{Introduction}
\setcounter{equation}{0}

We consider the Cauchy problem for two conservation laws with nonlocal flux describing traffic flow, 
\begin{equation}\label{eq:claw1}
\rho_t(t,x) + \left[ \rho(t,x) \cdot v\left(\int_x^{x+h} \rho(t,y) w(y-x) \; dy\right)\right]_x =0,
\end{equation}
and
\begin{equation}\label{eq:claw2}
\rho_t(t,x) + \left[ \rho(t,x) \cdot \left(\int_x^{x+h}v (\rho(t,y)) w(y-x) \; dy\right)\right]_x =0,
\end{equation}
where $x,t\in\mathbb{R}$ and $t\ge 0$. 
In both models,  $\rho$ is the density function of cars, and 
$h\in\mathbb{R}$ satisfies $h>0$.
We have the following assumptions on the functions $w$ and $v$: 
\begin{itemize}
\item[{\bf (A1)}]  {\it
The weight function $w$ is nonnegative and continuous on $[0,h]$, and  satisfies}
\begin{equation}\label{eq:wc1}
\int_0^h w(x)\, dx=1, \qquad 
\mbox{and}\quad 
w(x) =0 \quad \forall x\notin [0,h].
\end{equation}
\end{itemize}

Note that $w$ can be discontinuous at $x=0$ and $x=h$. 
Although in \eqref{eq:claw1}-\eqref{eq:claw2} $w$ is only used on the interval $[0,h]$,
we define it on the whole real line for later use in the particle models. 
\begin{itemize}
\item[{\bf (A2)}] {\it
The velocity function $v\in \mathcal{C}^2$ satisfies}
\begin{equation}\label{eq:vc}
v(0)=1, \quad v(1)=0, \qquad \mbox{and}\quad { v'(\rho) < 0} \quad \forall \rho\in[0,1].
\end{equation}
\end{itemize}
Assumptions (A2) are commonly used in traffic flow, indicating that for higher density the cars travel with lower speed.

The conservation laws~\eqref{eq:claw1}-\eqref{eq:claw2}
are  {\em macroscopic} models for traffic flow.
They can be formally derived as the continuum limit of the corresponding 
{\em particle models}, commonly referred to also as  {\em microscopic} models. 
Particle models consist of systems of ODEs that describe 
the time evolution of the position of  each individual car.

\medskip

\textbf{Particle model for~\eqref{eq:claw1}.}  In connection with the conservation law~\eqref{eq:claw1}, we consider  the following particle model.
Assuming that all cars have the same length $\ell\in\mathbb{R}^+$, 
let  $z_i (t)$ be the position of the $i$-th car at time $t$.  
We order the indices for the cars so that 
\begin{equation}\label{zi}
z_i(t) \le z_{i+1}(t)-\ell\qquad \forall t\ge 0, \quad \mbox{for every}~i\in\mathbb{Z}.
\end{equation}
For the $i$-th car, we define the local traffic density perceived by its driver, 
depending on
the relative position of the car in front, namely
\begin{equation}\label{defrho}
\rho_i(t)  \; \dot= \; \frac{\ell}{z_{i+1}(t)-z_i(t)}
\qquad \forall t\ge 0, \quad \mbox{for every}~i\in\mathbb{Z}.
\end{equation}
Note that if $\rho_i= 1$, then the two cars with indices $i$ and $i+1$ are
bumper-to-bumper.   For the model to be meaningful,
we therefore must have $0 \le \rho_i(t)  \le 1$ for all $i\in\mathbb{Z}$ and $t\ge 0$. 

We consider the {\em ``follow-the-leaders'' (FtLs) model}, defined as follows.
The speed of the $i$-th car  depends solely on 
a weighted average local density $\rho^*_i$, where the average is taken over an interval 
of length $h$ in front of $z_i$.   
More precisely, denoting a time derivative with an upper dot, we assume
\begin{equation}\label{FtLs}
 \dot z_i(t)  = v(\rho^*_i(t)),\quad \mbox{with}\quad 
  \rho^*_i(t)\, \dot= \, \sum_{k=0}^{+\infty} w_{i,k}(t) \rho_{i+k}(t)\,.
\end{equation}
For $k\geq 0$,  by $w_{i,k}(t)$ we denote
the weight assigned at time $t$ by the $i$-th driver to the car at $z_{i+k}(t)$.
In connection with the weight function $w(\cdot)$ in \eqref{eq:claw1}--\eqref{eq:wc1},
these weights are defined as 
 \begin{equation}\label{eq:wik}
w_{i,k}(t)  \,\dot=\, \int_{z_{i+k}(t)}^{z_{i+k+1}(t)} w(y-z_i(t))\; dy,
\qquad k\geq 0,1,2,\cdots.
\end{equation}
Notice that the summation in \eqref{FtLs} actually contains only finitely many non-zero terms.
Indeed, if  $(m+1) \ell \ge h$, then for every $k>m$ one has
$$z_{i+k}(t)~\geq~z_i(t)+(m+1)\ell~\geq~z_i(t)+h.$$
Hence, by \eqref{eq:wc1}, $w_{i,k}=0$. 
With the above definition, from~\eqref{eq:wc1} it also follows
\begin{equation}\label{eq:wp}
\sum_{k=0}^m w_{i,k}(t)=1, \qquad  w_{i,k}(t)\ge 0 \qquad \forall t \ge 0. 
\end{equation}

\medskip

\textbf{Particle model for~\eqref{eq:claw2}.}  
In this model, the speed of the car number $i$ located at $z_i$ is
the weighted average of the function $v(\cdot)$ over an interval in front of $z_i$. 
This leads to the second FtLs model: 
\begin{equation}\label{FtLs2}
 \dot z_i(t)  = v_i^*(\rho;t),\qquad \mbox{where}\quad 
  v_i^*(\rho;t)\;\dot= \;\sum_{k=0}^{+\infty} w_{i,k} (t)\, v(\rho_{i+k}(t)).
\end{equation}
Here the weights $w_{i,k}(t)$ are defined as  in~\eqref{eq:wik}. 
As before, the sum contains only finitely many non-zero terms.

\bigskip

The convergence of microscopic models to their macroscopic equivalents 
is of fundamental interest. 
In this paper, we  study this issue by looking at traveling wave profiles, defined below.
Since the equations~\eqref{eq:claw1} and~\eqref{eq:claw2} are rather similar, 
as well as the systems~\eqref{FtLs} and~\eqref{FtLs2}, 
we analyze in detail~\eqref{eq:claw1} and~\eqref{FtLs}.
The analysis for~\eqref{eq:claw2} and~\eqref{FtLs2} will only be presented briefly.

For~\eqref{eq:claw1} and~\eqref{eq:claw2}, traveling wave solutions 
are special solutions of the Cauchy problem where a profile travels with a constant velocity. 

\begin{definition}\label{def:TW}
We say that  $Q:\mathbb{R}\mapsto [0,1]$ is a {\em ``traveling wave profile'' for~\eqref{eq:claw1} with speed $\sigma$} if the function $\rho(t,x) = Q(x-\sigma t)$ 
provides a solution to \eqref{eq:claw1}.
In the special case where $\sigma=0$, we call $Q(\cdot)$ a {\em ``stationary profile''}.
\end{definition} 

To simplify the discussion, throughout the sequel
we seek stationary profiles $Q(\cdot)$ for~\eqref{eq:claw1}.
Traveling waves with non-zero velocity can be transformed 
into a stationary profile using a coordinate translation (see Section 5.1). 
In Section~\ref{sec:NLCL}
we derive the {\em delay integro-differential equation} \eqref{eq:p2}
satisfied by a stationary profile $Q(\cdot)$. 
The existence and uniqueness (up to horizontal shifts)
of monotone profiles are established. 
The asymptotic stability  of the traveling wave profiles is
important in the study of the long-time behavior of solutions. 
We show that, under mild assumptions on the smooth initial condition,  
the profiles $Q(\cdot)$ are the time asymptotic limits of solutions of~\eqref{eq:claw1},
as $t\to +\infty$.

\medskip

In addition,
we also seek ``stationary  discrete wave profiles'' $P(\cdot)$ 
for the FtLs model~\eqref{FtLs}, defined as follows. 

\begin{definition}\label{def:DTW}  
We say that $P:\mathbb{R}\mapsto [0,1]$ is a {\em ``stationary discrete wave profile''} for~\eqref{FtLs} if there exists a solution  $\{z_i(t); i\in\mathbb{Z}\}$ of~\eqref{FtLs},
such that 
\begin{equation}\label{P1}
P(z_i(t)) = \rho_i(t) = \frac{\ell}{z_{i+1}(t)-z_i(t)}, 
\qquad \forall t\ge 0, \quad \forall i \in\mathbb{Z}.
\end{equation}
\end{definition}

We derive a {\em delay differential equation}
with a summation term, see~\eqref{eq:dP},  satisfied by $P(\cdot)$. 
In a similar way as for $Q(\cdot)$,  we establish 
the existence and uniqueness (up to a horizontal shift) of the discrete 
profiles $P(\cdot)$. 
Furthermore, we show that 
these profiles provide  attractors to the solutions of the FtLs model~\eqref{FtLs},
for a wide family of initial data.

The profile $P(\cdot)$ depends on the length of the cars  $\ell$. 
Taking the limit $\ell\to 0$,
we prove the convergence of traveling wave solutions $P(\cdot)$ for the particle model
to the profile $Q(\cdot)$ for the nonlocal conservation laws.

An entirely similar set of results is proved for the system~\eqref{FtLs2} 
and the conservation law~\eqref{eq:claw2}, with only small modifications in the analysis. 

\medskip

For the local follow-the-leader model, where the speed of each car
is determined solely by the leader ahead,  the traveling wave profiles have been  
studied in a recent work by Shen \& Shikh-Khalil~\cite{ShenKarim2017},
where existence, uniqueness and stability of traveling waves were established.
For the general Cauchy problem, convergence of the local FtL model 
to the corresponding local macroscopic PDE model 
\begin{equation}\label{eq:c1}
\rho_t + f(\rho)_x=0, \qquad   \mbox{where}\quad f(\rho) = \rho v(\rho),
\end{equation}
has been studied in various 
papers~\cite{MR3356989, HoldenRisebro, HoldenRisebro2}.

For the nonlocal model~\eqref{eq:claw1}, 
existence of entropy weak solutions for the Cauchy problem  was proved
in~\cite{BG2016} by utilizing the convergence of a finite difference scheme,
and in~\cite{MR3461737} by means of a finite volume scheme. 
Well-posedness of the solutions for the Cauchy problem is also established in~\cite{BG2016}.
A similar result was proved in~\cite{AmorimColomboTeixeira2015} 
for kernel functions in $\C^2(\R)\cap W^{2,\infty}(\R)$ instead of $\C^1([0,h])$. 
Similar nonlocal conservation laws with symmetric kernel functions have been studied 
by~\cite{Zumbrun1999},
and~\cite{BetancourtBuergerKarlsenTory2011} in the context of sedimentation modeling. 
Multi-dimensional versions and systems were studied as models for crowd dynamics
\cite{AggarwalColomboGoatin2015, ColomboLecureuxMercier2011, AggarwalGoatin2016, ColomboGaravelloLecureuxMercier2011,  ColomboGaravelloLecureuxMercier2012, CrippaLecureuxMercier2013}. 
Other conservation laws with nonlocal flux functions 
include models for slow erosion of granular flow~\cite{ShenZhang, AmadoriShen2012},
synchronization behavior~\cite{AmadoriHaPark2017},
and materials with fading memory~\cite{ChenChristoforou2007}.
An overview over conservation laws with several other types of nonlocal flux functions 
can be found in~\cite{DuKammLehoucqParks2012, ColomboMarcelliniRossi2016} 
and the references therein.
See~\cite{MR3670045} for a recent result on uniqueness and regularity results 
on nonlocal balance laws and \cite{MR381810} for multi-dimensional 
nonlocal balance laws with damping.
Further relevant references can be found in~\cite{MR2860409, MR3672998}.
For classical results on delay differential equations, 
we refer to~\cite{MR0477368, MR0141863}.

The paper is organized as follows. 
In Section~\ref{sec:NLF} we study the non-local FtLs model~\eqref{FtLs}, 
proving existence and uniqueness of the traveling wave profiles. 
We also 
show that these profiles are time asymptotic limits of more general solutions to
the FtLs model. 
Similar results are proved for the non-local PDE model~\eqref{eq:claw1} 
in Section~\ref{sec:NLCL}. 
In Section~\ref{sec:conv} we prove the convergence of the profiles of the FtLs model~\eqref{FtLs}  to
those of the PDE model~\eqref{eq:claw1}, as the car length $\ell$ tends to 0. 
In Section~\ref{sec:insta} we discuss the case of travelling waves with non-zero 
velocity, and we consider a couple of examples where the profiles are unstable.
In Section~\ref{sec:model2} we treat the alternative 
system~\eqref{FtLs2} and the conservation law~\eqref{eq:claw2}, 
and prove similar results. 
Finally, some concluding remarks are given in Section~\ref{sec:cr}.

\section{Non-local Follow-the-Leaders models}\label{sec:NLF}
\setcounter{equation}{0}

We consider the non-local FtLs model in~\eqref{FtLs}, i.e.
\bel{ODEi}
\dot z_i(t) = v\left(  \sum_{k=0}^m \int_{z_{i+k}(t)}^{z_{i+k+1}(t)} w(y-z_i(t)) \, dy \cdot
\frac{\ell}{z_{i+k+1(t)}-z_{i+k}(t)}\right), 
\qquad i\in\mathbb{Z}.
\eeq
Here $m$ is chosen so that $(m+1)\ell \geq h$.
This family of countably many ODEs can be 
regarded as a nonlinear dynamical system on an infinite dimensional space.
For example, we could set $y_i(t)=z_i(t)-z_i(0)$ and write the system \eqref{ODEi}
as an evolution equation on the Banach space of bounded sequences of real numbers
$y=(y_i)_{i\in\mathbb Z}$,
with norm $\|y\|= \sup_i |y_i|$.
For each $i\in\mathbb{Z}$, the right hand side of \eqref{ODEi} is Lipschitz continuous.
For a given initial datum, the existence and uniqueness of solutions to this system 
follow from standard theory of evolution equations in Banach spaces, 
see for example~\cite{MR0492671, MR1873467}. 

We now  derive the equation satisfied by a stationary profile $P(\cdot)$, 
considered in Definition~\ref{def:DTW}.
Note that~\eqref{FtLs} can be rewritten as a system of 
ODEs for the
discrete density functions $\rho_i(t)$, for $i\in\mathbb{Z}$,
\begin{align}\label{rhodot}
    \dot \rho_i(t) 
~=~-\frac{\ell \left( \dot z_{i+1} (t)- \dot z_i(t) \right) }{(z_{i+1}(t)-z_i(t))^2} 
~=~ \frac{1}{\ell} \rho_i^2(t) \cdot \Big[ v(\rho^*_{i}(t)) - v(\rho^*_{i+1}(t))\Big].
\end{align}

Differentiating both sides of~\eqref{P1} w.r.t.~$t$, 
and using~\eqref{FtLs} and~\eqref{rhodot},
one obtains
\begin{equation}\label{eq:dP}
P'(z_i) ~=~ \frac{\dot \rho_i}{\dot z_i} ~= ~
\frac{\rho_i^2}{\ell \cdot v(\rho^*_i)} \Big[v(\rho^*_i)-v(\rho^*_{i+1})\Big]
=
\frac{P^2(z_i)}{\ell \cdot v( P^*(z_i))} \Big[v( P^*(z_i))-v( P^*(z_{i+1}))\Big].
\end{equation}
Here and in the sequel, a prime denotes a derivative w.r.t.~the space variable $x$.
To shorten the notation, we do not explicitly write out the time dependence of
$z_i$ and $\rho_i$.
Furthermore,  $ P^*(z_i)$ is the weighted average of $P$ over an interval in front of $z_i$, 
defined as 
\begin{equation}\label{eq:P*}
 P^*(z_i) \;\dot=\; \sum_{k=0}^m w_{i,k} P(z_{i+k}).
\end{equation}

To proceed with the analysis, 
we need to introduce some  notations. 
Given a profile $P$ with $P(x) >0$ for every $x$, we define an operator $L^P(x)$ for the position of the leader
of the car at $x$,
\begin{equation}\label{eq:LP}
L^P(x) \;\dot=\; x + \frac{\ell}{P(x)}.
\end{equation}
 We also write
\[ 
(L^P)^2=L^P\circ L^P \quad \mbox{and}\quad 
(L^P)^k \;\dot=\; \underbrace{L^P \circ L^P \circ \cdots \circ L^P}_\text{k times}
\]
to denote the composition of $L^P$ with itself multiple times.
We then have
\[
z_{i+1}=L^P(z_i) ,\qquad
z_{i+2}=L^P(z_{i+1})=(L^P)^2 (z_i) =   z_i+ \frac{\ell}{P(z_i)} + \frac{\ell}{P(z_{i+1})} ,
\]
and for a general index $k\in\mathbb{N}$,
\[
z_{i+k} = (L^P)^k(z_i)  = z_i + \sum_{j=0}^{k-1}\frac{\ell}{P(z_{i+j})}.
\]

We also define an averaging operator $A^P$ as 
\begin{equation}\label{eq:AP}
A^P(z_i) \;\dot=\; \sum_{k=0}^m w_{i,k} P\left((L^P)^k(z_i) \right).
\end{equation}

Since $z_i$ is arbitrarily chosen, we now write $x=z_i$. We have
\begin{equation}\label{eq:P}
P'(x)~ =~ - \frac{P^2(x)}{\ell \cdot v(A^P(x)) } \Big[ v(A^P(L^P(x))) - v(A^P(x))\Big].
\end{equation}
We see that the profile $P$ satisfies a {\em delay differential equation}, 
where the delays are introduced 
in the right-hand side of~\eqref{eq:P} by the operators $L^P$ and $A^P$. 
Since $P(x)\in(0,1)$ for all $x$, according to \eqref{eq:LP}
 the delay in~\eqref{eq:P}  will always be larger than $\ell$.

We seek continuous and monotone profiles $P(\cdot)$ 
that satisfy~\eqref{eq:P} with given asymptotic conditions at $x\to\pm\infty$. 
In the analysis below  we also study the initial value problem, 
where the solutions might be non-differentiable at the initial point. 
Therefore,  the derivative $P'(x)$ on the lefthand side of~\eqref{eq:P} 
indicates $P'(x-)$.

\subsection{Technical lemmas}

We assume that $w(\cdot)$ satisfies~\eqref{eq:wc1} and 
$v(\cdot)$ satisfies~\eqref{eq:vc}.  
Let  $\hat \rho$ denote the unique stagnation point of the local conservation law~\eqref{eq:c1}, i.e., the  point where, for $f(\rho)\,\dot=\,\rho v(\rho)$, we have
\begin{equation}\label{eq:stag}
0<\hat\rho<1, \quad 
f'(\hat \rho)=0, \quad \mbox{and}\quad 
\begin{cases} 
f'(\rho)>0 &~\mbox{for} ~\rho<\hat\rho, \\
 f'(\rho)<0 &~\mbox{for} ~\rho>\hat\rho. 
 \end{cases}
\end{equation}

The existence of solutions of~\eqref{eq:P} will be established in Section~\ref{sec:2.2}
for initial value problem and Section~\ref{sec:2.3} for asymptotic value problems. 
Assuming that solutions exist,  in this section we establish several technical lemmas.
We start with a definition. 

\begin{definition}\label{def:1}
Let the function $P:\mathbb{R}\mapsto (0,1) $ be given,
and let $\ell \in\mathbb{R}^+$ be the length of each car. 
Assume that 
\begin{equation}\label{PPc}
\ell P'(x)< P^2(x)\qquad  \forall x\in\mathbb{R}.
\end{equation}
We call a sequence of car positions  $\{z_i\,;~ i\in\mathbb{Z}\}$
{\em ``a distribution generated by $P(\cdot)$''} if
\begin{equation}\label{eq:def1}
z_{i+1}-z_i = \frac{\ell}{P(z_i)}, \qquad \forall i\in\mathbb{Z}.
\end{equation}
\end{definition}

The assumption~\eqref{PPc} ensures that for each car position $z_i$, 
there exists a unique follower $z_{i-1}$ such that 
$z_{i-1} + \ell/P(z_{i-1}) =z_i$. Furthermore, it implies that
\[ x+\ell/P(x) > y+\ell/P(y) \qquad \mbox{for every}  ~x>y.\]
Note that if $P(\cdot)$ satisfies the equation~\eqref{eq:P}, 
then~\eqref{PPc} holds, because 
\begin{equation}\label{PPcc}
P'(x) = \frac{P^2(x)}{\ell}  \cdot \left[ 1 - \frac{v(A^P(L^P(x)))}{v(A^P(x))}\right]
\le \frac{P^2(x)}{\ell}.
\end{equation}

We further note that
there exist infinitely many car distributions for any given profile $P$. 
However, if we fix the position of one car, say $z_0$, then the distribution is unique.

\begin{lemma}[Asymptotic limits]\label{lm:A}
Assume that $P(\cdot)$ is a bounded solution of~\eqref{eq:P} whose 
asymptotic limits satisfy 
\[
\lim_{x\to-\infty} P(x) = \rho^-,\qquad 
\lim_{x\to +\infty} P(x) = \rho^+,\qquad \lim_{x\to\pm\infty} P'(x)=0,
\]
where $\rho^-,\rho^+\in\mathbb{R}$ and $\rho^-, \rho^+\in(0,1)$ are real numbers.
Then, the following holds.
\begin{itemize}
\item
As $x\to+\infty$, $P(x)$  approaches $\rho^+$ with an exponential rate $\lambda_+^\ell \in\mathbb{R}^+$ 
if and only if  $\rho^+ > \hat\rho$,
where $\hat\rho$ satisfies~\eqref{eq:stag}.
The rate $\lambda_+^\ell$  satisfies the estimate
\begin{equation}\label{lambda+e}
\lambda^\ell_+ > \frac{b-1}{bh+a},
\qquad \mbox{where}\quad 
a\;\dot=\; \frac{\ell}{\rho^+},\quad b\;\dot=\; - \frac{\rho^+ v'(\rho^+)}{ v(\rho^+)}.
\end{equation}
\item
As $x\to-\infty$, $P(x)$  approaches $\rho^-$ with an exponential rate $\lambda_-^\ell\in\mathbb{R}^+$ 
if and only if  $\rho^- < \hat\rho$, where $\hat\rho$ satisfies~\eqref{eq:stag}.
The rate $\lambda_-^\ell$  satisfies the estimate
\begin{equation}\label{lambda-e}
\lambda^\ell_- > \frac{b'-1}{b'h+a'},
\qquad \mbox{where}\quad 
a'\;\dot=\; \frac{\ell}{\rho^-},\quad b'\;\dot=\; - \frac{\rho^- v'(\rho^-)}{ v(\rho^-)}.
\end{equation}
\end{itemize}
\end{lemma}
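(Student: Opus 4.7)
My plan is to linearize \eqref{eq:P} about each asymptote and reduce the question to a scalar characteristic equation. For $x\to+\infty$, I set $P(x)=\rho^++q(x)$ with $q,q'\to 0$. Since $L^P(x)\to x+a$ and $(L^P)^k(x)\to x+ka$ as $P\to\rho^+$, and the weights in \eqref{eq:wik} approach $w_k^{(0)}\;\dot=\;\int_{ka}^{(k+1)a}w(s)\,ds$, a first-order expansion of the right-hand side of \eqref{eq:P} produces the linear delay equation
\[
q'(x)\;=\;\frac{b}{a}\sum_{k=0}^{m}w_k^{(0)}\bigl[q(x+(k+1)a)-q(x+ka)\bigr],
\]
with $a,b$ as in \eqref{lambda+e}. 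The ansatz $q(x)=e^{-\mu x}$ with $\mu>0$ then yields the characteristic equation
\[
\mu\;=\;\phi(\mu)\;\dot=\;\frac{b(1-e^{-\mu a})}{a}\sum_{k=0}^{m}w_k^{(0)}e^{-\mu ka}.
\]

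Analyzing $\phi$ then gives the iff. Because $\sum_k w_k^{(0)}=1$, we have $\phi(0)=0$ and $\phi'(0)=b$; combining $1-e^{-\mu a}\le\mu a$ with $e^{-\mu ka}\le 1$ gives $\phi(\mu)<b\mu$ strictly for $\mu>0$, so no positive root exists when $b\le 1$. When $b>1$, the slope $\phi'(0)=b$ exceeds $1$ while $\phi$ stays bounded as $\mu\to+\infty$ (it tends to $bw_0^{(0)}/a$), so the intermediate value theorem produces a positive root $\lambda_+^\ell$. The condition $b>1$ rewrites as $v(\rho^+)+\rho^+v'(\rho^+)<0$, i.e., $f'(\rho^+)<0$, which by \eqref{eq:stag} is exactly $\rho^+>\hat\rho$. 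For the quantitative bound \eqref{lambda+e}, I combine the strict elementary inequalities $1-e^{-\mu a}>\mu a/(1+\mu a)$ and $e^{-\mu ka}\ge 1-\mu ka$ with the bound $\sum_k w_k^{(0)}ka\le h$ (forced by $\mathrm{supp}\,w\subset[0,h]$). This yields $\sum_k w_k^{(0)}e^{-\mu ka}\ge 1-\mu h$, and substitution into the characteristic equation, after dividing by $\mu>0$, gives $1+\mu a>b(1-\mu h)$, which rearranges to $\lambda_+^\ell>(b-1)/(a+bh)$.

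The case $x\to-\infty$ is symmetric. Linearizing \eqref{eq:P} at $\rho^-$ and inserting $q(x)=e^{\lambda x}$ with $\lambda>0$ leads to the dual characteristic equation $\lambda=\psi(\lambda)\;\dot=\;\frac{b'(e^{\lambda a'}-1)}{a'}\sum_k w_k^{(0)}e^{\lambda ka'}$, with $a',b'$ from \eqref{lambda-e}. Because $\psi$ is a product of nonnegative, increasing, convex functions of $\lambda\ge 0$, it is itself convex, with $\psi(0)=0$, $\psi'(0)=b'$, and $\psi(\lambda)\to+\infty$. Since convexity of $\psi$ with $\psi(0)=0$ forces $\psi(\lambda)/\lambda$ to be nondecreasing, the equation $\lambda=\psi(\lambda)$ has a positive root iff $\psi'(0)<1$, i.e., $b'<1$, which is $\rho^-<\hat\rho$. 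In this regime the bound \eqref{lambda-e} holds trivially, as its right-hand side is negative.

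The main obstacle I anticipate is not the algebra above but the step of justifying that a nonlinear profile $P$ actually inherits the exponential rate predicted by the linearization, rather than merely being consistent with it. Given a bounded monotone solution of \eqref{eq:P} with $P(x)\to\rho^+$ and $P'(x)\to 0$, I would treat $q=P-\rho^+$ as a solution of the linear delay equation above forced by a quadratic remainder, and close the argument by a Gronwall/contraction estimate in an exponentially weighted sup-norm $\sup_{x\ge x_0}|q(x)|e^{\mu x}$ for $\mu$ slightly below $\lambda_+^\ell$; the same scheme, run for $x\le x_0$ with weight $e^{-\lambda x}$, handles the left tail.
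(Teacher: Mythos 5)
Your proposal is correct and follows essentially the same route as the paper's proof: linearize at each asymptote, reduce to the characteristic equation $a\lambda/(1-e^{-a\lambda})=b\sum_{k}\hat w_{k}e^{-ka\lambda}$ (your $\mu=\phi(\mu)$ is the same equation in fixed-point form), read off the sign of the root from whether $b>1$ or $b<1$, and extract the bound \eqref{lambda+e} from $x/(1-e^{-x})<1+x$ together with a lower bound on the weighted exponential sum (your direct estimate $\sum_k \hat w_k e^{-\mu ka}\ge 1-\mu h$ replaces the paper's $be^{-h\lambda}$ plus tangent-line step, but lands on the identical inequality). The nonlinear-inheritance issue you flag at the end is genuine, but the paper does not address it either --- its argument, like yours, stays at the level of the formal linearization.
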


\begin{proof} 
We consider the limit $x\to+\infty$, and linearize~\eqref{eq:P} at $\rho^+$. 
We write
\[ P(x) = \rho^+ + \eta(x),\]
where $\eta(\cdot)$ is a first order perturbation.  Below we use ``$\approx$'' to denote
the first order approximation. 
Let $\{z_i: i\in\mathbb{Z}\}$ be a car distribution generated by $P(\cdot)$,
we have,
\[ L^P(x) \approx   x + \frac{\ell}{\rho^+} , \qquad 
z_{i+k+1} - z_{i+k} \approx \frac{\ell}{\rho^+}, \qquad
(L^P)^k(x) \approx  x + \frac{k \ell}{\rho^+}.
\]
For the weights $w_{i,k}$ we have 
\begin{equation}\label{eq:wkhat}
w_{i,k} \approx \int_{z_{i+k}}^{z_{i+k} + \ell/\rho^+} w(y-z_i)\; dy
\approx \int_{k\ell/\rho^+}^{(k+1)\ell/\rho^+} w(s)\; ds \;\dot=\; \hat w_k.
\end{equation}
Note that the approximated weights $\hat w_k$ are independent of the index $i$. 
We have 
\[
\hat w_k \ge 0 \quad (0\le k \le m),  \qquad
 \sum_{k=0}^{m} \hat w_k =1.
\]
We compute
\begin{eqnarray*} 
A^P(x) &\approx& 
\sum_{k=0}^{ m} \hat w_{k} \left[\rho^+ + \eta\left(x+\frac{k\ell}{\rho^+}\right)\right]
= \rho^+ + \sum_{k=0}^{ m} \hat w_{k} \cdot\eta\left(x+\frac{k\ell}{\rho^+}\right),\\
A^P\left(L^P(x)\right) &\approx&
 \rho^+ + \sum_{k=0}^{ m} \hat w_{k} \cdot\eta\left(x+\frac{(k+1)\ell}{\rho^+}\right),\\
v\left(A^P(x)\right) &\approx& 
v(\rho^+) + v'(\rho^+) 
\cdot \sum_{k=0}^{ m} \hat w_{k} \cdot\eta\left(x+\frac{k\ell}{\rho^+}\right),
\\
v\left(A^P(L^P(x))\right)& \approx &
v(\rho^+) + v'(\rho^+) \cdot 
\sum_{k=0}^{ m} \hat w_{k} \cdot\eta\left(x+\frac{(k+1)\ell}{\rho^+}\right).
\end{eqnarray*}
Plugging all these approximations into~\eqref{eq:P}, 
we obtain
\begin{eqnarray*}
\eta'(x) &\approx& 
-v'\left(\rho^+\right) \frac{(\rho^+)^2}{\ell \cdot v(\rho^+)}  \sum_{k=0}^{ m} \hat w_{k} 
\cdot \left[
\eta\left(x+\frac{(k+1)\ell}{\rho^+}\right) - \eta\left(x+\frac{k\ell}{\rho^+}\right)
\right].
\end{eqnarray*}
Using the positive coefficients $a,b$ defined in~\eqref{lambda+e}, we can write
the linearization of~\eqref{eq:P} as 
\begin{equation}\label{eq:eta}
\eta'(x) = \frac{b}{a} \cdot \sum_{k=0}^{ m} \hat w_{k} \Big[
\eta(x+(k+1)a) - \eta(x+ka)
\Big],
\end{equation}
where the linearized weights $\hat w_{k} $ are given in~\eqref{eq:wkhat}. 

Note that \eqref{eq:eta} is a {\em linear delay differential equation}, which can be solved explicitly using its characteristic equation.
Seeking solutions of the form 
\[
\eta(x) = M e^{-\lambda x}, \qquad \lambda\in\mathbb{R},
\]
where $M$ is an arbitrary constant (positive or negative), 
we have the characteristic equation
\[
-\lambda = \frac{b}{a} \cdot \sum_{k=0}^{ m} \hat w_{k}  
\left[  e^{-(k+1)a \lambda} - e^{-ka\lambda}\right]
= - \frac{b}{a}  \left(1- e^{-a\lambda}\right)\cdot \sum_{k=0}^{ m} \hat w_{k}   e^{-ka\lambda} .
\]
Thus, 
the rate $\lambda$ must satisfy the equation 
\begin{equation}\label{eq:G22}
\mathcal{L}(\lambda) = \mathcal{R}(\lambda), \quad \mbox{where}\quad
\begin{cases} \displaystyle
\mathcal{L}(\lambda) \;\dot=\;  b \sum_{k=0}^{ m} \hat w_{k}   e^{-ka\lambda},  \\[4mm]
\displaystyle
  \mathcal{R}(\lambda) \;\dot=\;  \frac{a\lambda}{ 1-e^{-a\lambda}} , 
\quad \mbox{and}\quad
\mathcal{R} (0) \;\dot=\; \lim_{\lambda \to 0} \mathcal{R} (\lambda) = 1. 
\end{cases}
\end{equation}
The functions $\mathcal{L}(\cdot)$ and $\mathcal{R}(\cdot)$ are continuous, 
satisfying the properties
\begin{eqnarray*}
\mathcal{L}(0)= b, \qquad \lim_{\lambda\to\infty}\mathcal{L}(\lambda)=0,
\qquad
\lim_{\lambda\to-\infty}\mathcal{L}(\lambda)=\infty, 
\quad &\mbox{and}&
\mathcal{L}'(\lambda) <0
\quad\forall \lambda\in\mathbb{R}, \\
\mathcal{R}(0)= 1, \qquad 
\lim_{\lambda\to\infty}\mathcal{R}(\lambda)=\infty,
\qquad \lim_{\lambda\to- \infty}\mathcal{R}(\lambda)=0,
\quad
&\mbox{and}&
 \mathcal{R}'(\lambda) >0\quad\forall \lambda\in\mathbb{R}.
\end{eqnarray*}
We see that $\mathcal{L}(\cdot)$ is monotonically decreasing and $\mathcal{R}(\cdot)$ is monotonically increasing, 
and the range of both functions is $(0,\infty)$. 
We conclude that there exists exactly one solution $\lambda$ for~\eqref{eq:G22}. 
Furthermore, we observe that:
\begin{itemize}
\item
If $b>1$, the solution $\lambda$ is positive, which we denote by  $\lambda_+^\ell >0$; 
\item
if $b=1$, the solution is $\lambda=0$, which leads to the trivial solution $P(x)\equiv \rho^+$;
\item
if $b<1$, the solution $\lambda$ is negative, which leads to an unstable asymptote. 
\end{itemize}
Thus, we obtain a stable asymptote at $x\to +\infty$ if and only if $b>1$.
We have 
\[
b>1 \quad \Longleftrightarrow \quad 
-v'(\rho^+) \frac{(\rho^+)}{ v(\rho^+)}>1 \quad \Longleftrightarrow \quad 
f'(\rho^+)=v(\rho^+)+\rho^+v'(\rho^+) <0  \quad \Longleftrightarrow \quad 
\rho^+ > \hat \rho,
\]
where $f(\rho)=\rho v(\rho)$, and $\hat\rho$ satisfies~\eqref{eq:stag}.

\medskip

To get an estimate on $\lambda^\ell_+$, we define the monotone function
$H(\lambda)\,\dot=\,\mathcal{L}(\lambda)-\mathcal{R}(\lambda)$. 
Since $b>1$, we have 
\[
H(0) = b-1 >0, \qquad  H'(\lambda) <0  \quad \forall \lambda\in\mathbb{R},
\]
therefore $H$ has a unique zero which is positive. 
Using the inequality 
\[
\frac{x}{1-e^{-x}} < 1+x \qquad \forall x>0,
\] 
we obtain 
\begin{equation}\label{estR}
-\mathcal{R}(\lambda)  > -(1+a\lambda) \qquad \mbox{for}~~\lambda >0.
\end{equation}
Moreover, since $\hat w_k=0$ if $ ka  \ge h $, 
then for $\lambda>0$ we have 
\begin{equation}\label{estL}
\mathcal{L}(\lambda) > b \sum_{k=0}^m \hat w_k e^{-h\lambda} = b e^{-h\lambda}.
\end{equation}
Combining~\eqref{estR}-\eqref{estL}, we have
\begin{equation}\label{cH}
H(\lambda) > \mathcal{H}(\lambda) \;\dot=\;b e^{-h \lambda} -1 - a \lambda  
\qquad \forall \lambda>0. 
\end{equation}
The function $\mathcal{H}(\cdot)$ has the properties
\[
\mathcal{H}(0) = b-1,  \quad
 \mathcal{H}'(\lambda) = -b he^{-h\lambda} -a  <0 , \quad
 \mathcal{H}''(\lambda) = bh^2 e^{-h\lambda} >0 \qquad \forall \lambda>0.
\]
Therefore $\mathcal{H} $ has a unique positive root $\lambda^\flat$, 
satisfying the rough estimate
\[
\lambda^\flat > -\frac{\mathcal{H}(0)}{\mathcal{H}'(0)} = \frac{b-1}{bh+a}.
\]
Using the fact that $\lambda^\ell_+ > \lambda^\flat$, we obtain the estimate~\eqref{lambda+e}.

A completely similar computation can be carried out for the limit $x\to-\infty$, 
replacing $\rho^+$ with $\rho^-$.  We omit the details.
\end{proof}

\begin{remark}\label{rm:1}
We see that, if $\rho^+=\hat \rho$, then $b=1$ and so $\lambda^\ell_+=0$.
If $\rho^+\to 1$, then $b\to \infty$, so~\eqref{eq:G22} implies 
\[
 \sum_{k=0}^{ m} \hat w_{k}   e^{-ka\lambda_+} \to 0 \qquad 
 \Rightarrow \quad \lambda_+ \to\infty.
\] 
A similar argument shows that if $\rho^-\to0$, then $\lambda_- \to \infty$. 
Therefore, we obtain two trivial cases for the profile $P(\cdot)$:
\begin{itemize}
\item If  $\rho^-=\hat\rho=\rho^+$, then $P(x)\equiv \hat\rho$ for all $x\in\mathbb{R}$;
\item If $\rho^-=0$ and $\rho^+=1$, then $P$ is a unit step function, with a jump at some $x_0\in\mathbb{R}$.
\end{itemize}
\end{remark}
Thanks to Remark~\ref{rm:1}, in the sequel 
we consider only the nontrivial cases:
\begin{equation}\label{eq:Assu}
0 < P(x) < 1 \quad \forall x\in\mathbb{R}, \qquad \mbox{and}\quad  0<\rho^- < \hat \rho< \rho^+<1.
\end{equation}

\begin{definition}\label{def:period}
Let $\{z_i(t): i\in\mathbb{Z}\}$ be the solution of~\eqref{FtLs} with initial condition $\{z_i(0): i\in\mathbb{Z}\}$.
We say that $\{z_i(t): i\in\mathbb{Z}\}$ is \textbf{periodic} if there exists a constant $t_p\in\mathbb{R}^+$,
independent of $i$ and $t$, such that
\begin{equation}\label{PT}
z_i (t+t_p) = z_{i+1}(t), \qquad \forall i\in\mathbb{Z}, ~ \forall t \ge 0.
\end{equation}
We refer to $t_p$ as the \textbf{period}.
\end{definition}

Definition~\ref{def:period} indicates that, in a periodic solution $\{z_i(t): i\in\mathbb{Z}\}$, 
after a time period of $t_p$, each car takes over the position of its leader. 
Intuitively,  if $P$ is a stationary profile, and $\{z_i(t)\}$ satisfies~\eqref{P1}, 
then $\{z_i(t)\}$ must be periodic. 
Indeed, in next Lemma we show that these two situations are equivalent.

\begin{lemma} \label{lm:period}
Let $P(\cdot)$ be a continuous function.
Then, $P(\cdot)$ satisfies~\eqref{eq:P} if and only if
\begin{equation}\label{eq:PE}
\int_x^{x+\ell/P(x)} \frac{1}{v(A^P(z))}\; dz = t_p,\qquad \forall x\in\mathbb{R}
\end{equation}
for some constant $t_p \in \mathbb{R}^+$. 
Moreover we have
\begin{eqnarray}
\lim_{x\to \infty}P(x) = \rho^+ \qquad 
&\implies& \quad  t_p=\frac{\ell}{\bar f}, ~ \mbox{where} ~ \bar f = f(\rho^+) ,
\label{eq:tp1}\\
\lim_{x\to -\infty}P(x) = \rho^- \qquad 
&\implies& \quad  t_p=\frac{\ell}{\bar f}, ~ \mbox{where} ~ \bar f = f(\rho^-) .
\label{eq:tp2}
\end{eqnarray}
Furthermore, let $\{z_i(t): i\in\mathbb{Z}\}$ be a solution of~\eqref{FtLs}. Then, 
$\{z_i(t)\}$ satisfies~\eqref{P1} if and only if $\{z_i(t)\}$ is periodic, with period $t_p$
given in~\eqref{eq:PE}.
\end{lemma}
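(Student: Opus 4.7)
The plan is to handle the three assertions in turn, each reducing to direct computation centered on the travel-time functional
\[
G(x) \;\dot=\; \int_x^{L^P(x)} \frac{dz}{v(A^P(z))}.
\]
For the first equivalence, I differentiate $G$ by Leibniz's rule. Since $L^P(x)=x+\ell/P(x)$ gives $(L^P)'(x) = 1 - \ell P'(x)/P^2(x)$, one has
\[
G'(x) \;=\; \frac{(L^P)'(x)}{v(A^P(L^P(x)))} \,-\, \frac{1}{v(A^P(x))}.
\]
Clearing denominators, the equation $G'(x) = 0$ rearranges algebraically to \eqref{eq:P}. Hence $P$ satisfies \eqref{eq:P} if and only if $G$ is constant, which is exactly \eqref{eq:PE}; the constant $t_p$ is strictly positive since the integrand is strictly positive under \eqref{eq:Assu}.

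For the asymptotic identifications \eqref{eq:tp1}--\eqref{eq:tp2}, I exploit that $t_p = G(x)$ holds for every $x$ and let $x \to \pm\infty$. Under $P(x) \to \rho^\pm$ the length $\ell/P(x) \to \ell/\rho^\pm$, and the integrand $1/v(A^P(z))$ converges uniformly on the translated interval to $1/v(\rho^\pm)$ (using continuity of $v$ together with the fact that $A^P$ is a convex combination of values $P((L^P)^k(\cdot))$, each of which shares the same asymptotic limit). The claim $t_p = \ell/f(\rho^\pm)$ then follows immediately.

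For the final equivalence between \eqref{P1} and periodicity, the key observation is that under \eqref{P1} we have $\rho^*_i(t) = \sum_k w_{i,k}(t) P(z_{i+k}(t)) = A^P(z_i(t))$, so the infinite system \eqref{FtLs} decouples into the scalar autonomous ODE $\dot z_i = v(A^P(z_i))$. The travel time from $z_i(0)$ to $z_{i+1}(0) = L^P(z_i(0))$ is therefore $G(z_i(0)) = t_p$, and by the flow property $z_i(t+t_p) = z_{i+1}(t)$, i.e.\ periodicity. Conversely, given periodicity, strict monotonicity of $t\mapsto z_i(t)$ (from $\dot z_i > 0$) together with the identity $z_{i+1}(t) = z_i(t+t_p)$ allow me to define a function $\widetilde P$ along the orbit via $\widetilde P(z_i(t))\;\dot=\;\ell/(z_{i+1}(t)-z_i(t))$ in a well-posed manner; the derivation leading to \eqref{eq:dP} then shows $\widetilde P$ satisfies \eqref{eq:P}, hence also \eqref{eq:PE} with the same $t_p$, which is to be matched with the given $P$.

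The step I expect to require the most care is this converse in the final part: showing that the profile $\widetilde P$ reconstructed from a periodic orbit must coincide with the given $P$, rather than being an unrelated solution of the same delay equation with the same period. Well-definedness of $\widetilde P$ and the fact that it satisfies \eqref{eq:P} are straightforward; the identification $\widetilde P = P$ rests on a uniqueness property of monotone profiles that is developed in the following subsections. All remaining steps are routine applications of Leibniz's rule and passage to limits.
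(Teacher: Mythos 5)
Your proof follows essentially the same route as the paper's: differentiating the travel-time integral $G$ to obtain the equivalence with \eqref{eq:P}, letting $x\to\pm\infty$ in \eqref{eq:PE} to identify $t_p=\ell/f(\rho^\pm)$, and using the decoupled scalar equation $\dot z_i = v(A^P(z_i))$ to convert \eqref{eq:PE} into the periodicity statement. The only point of divergence is the converse of the final claim, which you flag as the delicate step: the paper reads ``$\{z_i(t)\}$ satisfies \eqref{P1}'' existentially, so its proof simply constructs a continuous profile from the periodic orbit exactly as you construct $\widetilde P$ (continuity following from $z_i(\hat t+t_p)=z_{i+1}(\hat t)$) and stops there; no identification of $\widetilde P$ with a pre-given $P$ is attempted or needed. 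Your deferred matching step is therefore superfluous under the intended reading --- and note that it could not be carried out from the period alone in any case, since profiles sharing the same $t_p$ are unique only up to horizontal shifts, so one would additionally have to anchor the two profiles at a common car position.
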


\begin{proof}
\textbf{1.} Differentiating~\eqref{eq:PE} in $x$ on both sides, we obtain 
\[
\left(1- \frac{\ell}{P^2(x)} P'(x)\right)\frac{1}{v(A^P(L^P(x)))}  - \frac{1}{v(A^P(x))}=0, 
\]
which is equivalent to~\eqref{eq:P}.

\textbf{2.}
Furthermore, since~\eqref{eq:PE} holds for all $x\in\mathbb{R}$, we take the limit $x\to\infty$ and get
\[
t_p = \lim_{x\to\infty}\int_x^{x+\ell/P(x)} \frac{1}{v(A^P(z))}\; dz =
\frac{\ell}{\rho^+} \cdot \frac{1}{v(\rho^+)} = \frac{\ell}{f(\rho^+)},
\] 
proving~\eqref{eq:tp1}. 
A completely similar argument leads to~\eqref{eq:tp2}.

\textbf{3.} Let $\{z_i(t): i\in\mathbb{Z}\}$ be a solution of~\eqref{FtLs}. 
Assume that $\{z_i(t)\}$ satisfies~\eqref{P1}. 
Fix an index $i\in\mathbb{Z}$ and a time $\hat t\ge 0$. 
We have 
\[\frac{d z_i}{dt}   = v(A^P(z_i)) \qquad \implies \quad  \frac{d z_i}{v(A^P(z_i))} = dt,\]
a separable equation which can be solved implicitly.
The time it takes for the car at $z_i$ to reach $z_{i+1}=z_i+\ell/P(z_i)$ is
\[
t_{p,i} =\int_{\hat t}^{\hat t +t_{p,i}} dt =  \int_{z_i}^{z_i +\ell/P(z_i)} \frac{1}{v(A^P(z))}\; dz .
\]
Thanks to~\eqref{eq:PE}, we conclude $t_{p,i} \equiv t_p$ for all $i\in\mathbb{Z}$,
therefore $\{z_i(t)\}$ is periodic. 

On the other hand, assume that $\{z_i(t)\}$ is periodic, and let $t_p$ denote its period.
Fix a time $\hat t\in\mathbb{R}^+$, and consider the interval $t\in[\hat t , \hat t+t_p]$.
On each space interval  $x\in[z_i(\hat t), z_{i+1}(\hat t)]$, we define the function $P$ as  
\[
P(z_i(t)) = \rho_i(t) \qquad  t\in[\hat t , \hat t+t_p].
\]
Since $\{z_i(t)\}$ is periodic, we have 
$P(z_i(\hat t +t_p))=P(z_{i+1}(\hat t))$, 
and $P$ is continuous. 
The period, i.e., the time it takes for car at $z_i(\hat t)$ to reach $z_{i+1}(\hat t)$,
is
\[
t_p = \int_{z_i}^{z_i +\ell/P(z_i)} \frac{1}{v(A^P(z))}\; dz \quad \forall i .
\]
Since $\hat t$ is arbitrarily chosen, we conclude~\eqref{P1}.
\end{proof}

\subsection{Initial value problems} \label{sec:2.2}

We assume that the assumptions~\eqref{eq:wc1}-\eqref{eq:vc} hold. 
Fix any point $x_0\in\mathbb{R}$, and let $\Psi:[x_0, +\infty[\,\mapsto\R$ be 
a smooth increasing function,  satisfying
\begin{equation}\label{eq:P0}
0 <\Psi(x) < 1, \quad \Psi'(x) >0, \quad \forall x\ge x_0.
\end{equation}

We let the ``initial condition''  be given on $x\ge x_0$, i.e.,
\begin{equation}\label{P:int}
P(x)=\Psi(x),\qquad \mbox{for}~x\ge x_0.
\end{equation}
Then,  the delay differential equation~\eqref{eq:P} can be solved backwards in $x$, 
for $x<x_0$. 
We refer to this as the ``initial value problem'' for~\eqref{eq:P}, 
and we seek continuous solutions $P(\cdot)$ on $x<x_0$. 
Note that, since $\Psi(x)$ might not satisfy~\eqref{eq:P} on $x\ge x_0$,  
the derivative  $P'$ might not be continuous at~$x_0$. 
Therefore, it is assumed that the derivative $P'(x)$ in~\eqref{eq:P} denotes the left
derivative $P'(x-)$. 

\begin{lemma}[Monotonicity and positivity]\label{lm:mono}
Let $P(\cdot)$ be a solution of the initial value problem for~\eqref{eq:P},  
with initial condition~\eqref{P:int},  satisfying~\eqref{eq:P0}.  Then, we have 
\begin{equation}\label{eq:Pp}
0 <P(x) < \Psi(x_0), \quad P'(x) >0, \quad \forall x < x_0.
\end{equation}
\end{lemma}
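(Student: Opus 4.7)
The plan is to establish the three inequalities via a continuation (``shooting'') argument starting at $x_0$ and proceeding leftward. The first observation is that, since $v'<0$ on $[0,1]$ and both $v$ and $P$ are positive, the delay ODE \eqref{eq:P} gives
\[
P'(x) \;>\; 0 \qquad \Longleftrightarrow \qquad A^P(L^P(x)) \;>\; A^P(x),
\]
so positivity of $P'$ reduces to strict monotonicity of the forward average $A^P$ under one ``leader-step.'' Positivity $P>0$ is built into the standing assumption that $P$ solves \eqref{eq:P} (otherwise $L^P(x)$ blows up); the only substantive claims are therefore $P(x)<\Psi(x_0)$ and $P'(x)>0$.

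The technical heart of the proof is a monotonicity lemma for $A^P$: if $P$ is strictly increasing on the car chain generated by $P$ starting at $x$, then $A^P(L^P(x))>A^P(x)$. I would establish this by rewriting, for the chain $z_0=x$, $z_{k+1}=L^P(z_k)$,
\[
A^P(z_i) \;=\; \int_0^h w(s)\,\phi_i(s)\, ds, \qquad \phi_i(s) \,\dot=\, P(z_{i+k})\ \text{ for }\ s\in [z_{i+k}-z_i,\, z_{i+k+1}-z_i),
\]
observing the shift-by-one-cell identity $\phi_{i+1}(s)=\phi_i(s+\Delta_i)$ with $\Delta_i=\ell/P(z_i)$, and noting that since the step heights $P(z_{i+k})$ are strictly increasing in $k$ and the step widths $\ell/P(z_{i+k})$ are strictly smaller than $\Delta_i$ for $k\ge 1$, the shifted step function $\phi_i(\cdot+\Delta_i)$ strictly dominates $\phi_i$ almost everywhere on $[0,h]$. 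Combined with $w\ge 0$ and $\int_0^h w=1$, this yields the strict inequality $A^P(z_{i+1})>A^P(z_i)$.

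With this lemma in hand I would close the argument as follows. At $x_0-$ the values $P=\Psi$ on $[x_0,\infty)$ are strictly increasing by \eqref{eq:P0}, so the monotonicity lemma gives $P'(x_0-)>0$; by continuity of the right-hand side of \eqref{eq:P}, this persists on a left neighborhood $(x_0-\delta,x_0)$, on which $P$ is strictly increasing and $P(x)<\Psi(x_0)$. Next, set
\[
x^* \,\dot=\, \inf\bigl\{x<x_0 \,:\, P'(y)>0\ \text{and}\ P(y)<\Psi(x_0)\ \forall\, y\in(x,x_0)\bigr\},
\]
and suppose for contradiction that $x^*>-\infty$. By continuity, either $P(x^*)=\Psi(x_0)$ or $P'(x^*)=0$. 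The first is ruled out since $P$ is strictly increasing on $(x^*,x_0)$ with $P(x_0)=\Psi(x_0)$, forcing $P(x^*)<\Psi(x_0)$. In the second case, $P(x^*)>0$ combined with \eqref{eq:P} forces $A^P(L^P(x^*))=A^P(x^*)$; but $P$ is strictly increasing on $[x^*,\infty)$ (combining strict monotonicity on $(x^*,x_0)$ with $\Psi'>0$ on $[x_0,\infty)$), so the monotonicity lemma yields the strict inequality, a contradiction. Hence $x^*=-\infty$.

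The main obstacle is the monotonicity lemma for $A^P$: because the weights $w_{i,k}$ in $A^P(z_i)$ and $A^P(z_{i+1})$ depend on the non-uniform car distribution generated by $P$, they are different distributions, so a naive term-by-term comparison is unavailable. The integral representation combined with the shift identity $\phi_{i+1}(s)=\phi_i(s+\Delta_i)$ reduces this to a clean pointwise comparison of two step functions, which is the crux.
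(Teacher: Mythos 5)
Your proof is correct and follows essentially the same route as the paper: propagate $P'>0$ leftward from $x_0$ and derive a contradiction at the first point $\hat x$ where $P'(\hat x)=0$, using the fact that the averaging operator applied to a strictly increasing profile satisfies $A^P(L^P(\hat x))>A^P(\hat x)$, which via~\eqref{eq:P} forces $P'(\hat x)>0$. The one substantive addition is your step-function representation of $A^P(z_i)$ together with the shift identity $\phi_{i+1}(s)=\phi_i(s+\Delta_i)$, which rigorously justifies the strict inequality that the paper dispatches with the phrase ``$A^P$ is an averaging operator''; this is a worthwhile detail since the weights $w_{i,k}$ at $z_i$ and $z_{i+1}$ differ and a term-by-term comparison is not available.
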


\begin{proof} 
We first prove the monotonicity.  Observe that, since $\Psi$ is monotone increasing 
on $x\ge x_0$, by~\eqref{eq:P} we have $P'(x_0-)>0$. 
We now proceed with contradiction. 
Assume that $P(\cdot)$ fails to be monotone on $x<x_0$. Then there exists
a point $\hat x < x_0$ such that 
\begin{equation}\label{eq:Pcontra}
P'(\hat x)=0, \qquad P'(x) >0 \quad \forall x > \hat x. 
\end{equation}
Since $P(x)$ is monotone on $x>\hat x$, and $A^P$ is an averaging operator, 
we have 
\[
A^P(\hat x) < A^P\left(L^P(\hat x)\right),\qquad 
v\left(A^P(\hat x)\right) > v\left(A^P(L^P(\hat x))\right).
\]
By~\eqref{eq:P} we get $P'(\hat x) >0$, contradicting~\eqref{eq:Pcontra}. 

The positivity of $P(\cdot)$ follows from the fact 
that equation~\eqref{eq:P} is ``autonomous'' and $P=0$ 
is a critical point. 
\end{proof}

The next theorem states the existence and uniqueness for the  initial value problem. 

\begin{theorem}\label{th:2f}
Consider the initial value problem for~\eqref{eq:P} with initial  condition~\eqref{P:int}, 
satisfying~\eqref{eq:P0}.
Then, there exists a unique continuous solution $P(\cdot)$ on $(-\infty,x_0]$
with the following properties.
\begin{itemize}
\item  $P(\cdot)$ is monotone and Lipschitz continuous, with 
\begin{equation}\label{P-Lip}
P'(x)  \le \ell^{-1} P^2(x) 
\le \ell^{-1} \Psi^2(x_0) <\ell^{-1}.
\end{equation}

\item 
$P(\cdot)$ has the  asymptotic value  
\[
\lim_{x\to-\infty} P(x)=\rho^-_0, 
\]
where $\rho^-_0\in\mathbb{R}$ satisfies
\begin{equation}\label{eq:Ptp}
0<\rho^-_0< \hat\rho, \qquad 
\frac{\ell}{f(\rho^-_0)} = \int_{x_0}^{x_0+\ell/\Psi(x_0)} \frac{1}{v(A^\Psi(z))}\;dz.
\end{equation}
Here $\hat\rho$ is  defined in~\eqref{eq:stag}, and
the operator $A^\Psi$ is defined in~\eqref{eq:AP}, replacing $P$ with $\Psi$.

\item  
Let $L_v$ be the Lipschitz constant for the map $\rho \mapsto (1/v(\rho))$, 
we have 
\begin{equation}\label{eq:LDP}
P\left(x+\frac{\ell}{P(x)} + h\right) - P(x) > \frac{P(x)}{L_v} \cdot
\left[ \frac{1}{ f(\rho_0^-)} - \frac{1}{f(P(x))}\right].
\end{equation}
\end{itemize}
\end{theorem}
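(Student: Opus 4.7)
My plan is to construct the solution by a backward-stepping Picard argument, then read off monotonicity, positivity, and the Lipschitz bound from Lemma~\ref{lm:mono} together with~\eqref{PPcc}, and finally exploit the integral reformulation of Lemma~\ref{lm:period} to identify the asymptotic value and to derive~\eqref{eq:LDP}. The structural observation that unlocks the construction is $P<1 \Rightarrow L^P(x) \geq x+\ell$: the retardation in~\eqref{eq:P} is always at least $\ell$ forward in $x$, so stepping backward only involves values of $P$ that are already known. Concretely, on $[x_0-\delta,x_0]$ with $\delta<\ell$, every iterate $(L^P)^k(x)$, $k\geq 1$, lies in $[x_0,+\infty)$ where $P\equiv \Psi$ is prescribed, so~\eqref{eq:P} collapses to an ordinary equation $P'(x)=F(x,P(x))$. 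The dependence of $F$ on its second argument enters through the factor $P^2$, the sliding point $L^P(x)=x+\ell/P(x)$, and the weights $w_{i,k}$, and is locally Lipschitz on any compact subinterval of $(0,1)$. Picard's theorem then yields a unique continuous solution on $[x_0-\delta,x_0]$; iterating layer by layer (each layer has width $\geq \ell$ since $\ell/P\geq \ell$) and invoking the a~priori bound $P\in(0,\Psi(x_0))$ supplied by Lemma~\ref{lm:mono} extends the solution uniquely to all of $(-\infty,x_0]$.

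With existence settled, monotonicity and the bound $0<P<\Psi(x_0)$ are exactly Lemma~\ref{lm:mono}. For~\eqref{P-Lip}, rewriting~\eqref{eq:P} in the form~\eqref{PPcc} and using that $A^P$ averages the increasing function $P$ (so $A^P(L^P(x))\geq A^P(x)$) while $v$ is decreasing forces the bracket into $[0,1]$, giving $0\leq P'(x)\leq P^2(x)/\ell\leq \Psi^2(x_0)/\ell$. Monotonicity and boundedness produce the limit $\rho_0^-:=\lim_{x\to-\infty}P(x)\in[0,\Psi(x_0))$, and Lemma~\ref{lm:period} provides a constant $t_p$ with $\int_x^{x+\ell/P(x)} dz/v(A^P(z))\equiv t_p$ for all $x\leq x_0$. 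Evaluating at $x=x_0$ (where $P\equiv\Psi$ on $[x_0,+\infty)$ makes $A^P\equiv A^\Psi$ on the integration interval) and letting $x\to-\infty$ yield
\[
t_p = \int_{x_0}^{x_0+\ell/\Psi(x_0)} \frac{dz}{v(A^\Psi(z))} = \frac{\ell}{f(\rho_0^-)},
\]
which is~\eqref{eq:Ptp}. Finiteness of the left integral forces $\rho_0^->0$. For $\rho_0^-<\hat\rho$, the equation $f(\rho)=\ell/t_p$ has at most two roots in $(0,1)$, one on each side of $\hat\rho$; strict monotonicity $\Psi'>0$ gives $A^\Psi(z)>\Psi(x_0)$ for $z>x_0$, hence $t_p>\ell/f(\Psi(x_0))$ and $f(\rho_0^-)<f(\Psi(x_0))$. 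Combined with $\rho_0^-<\Psi(x_0)$ (from strict monotonicity of $P$), this excludes the upper root and forces $\rho_0^-\in(0,\hat\rho)$.

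For~\eqref{eq:LDP} I would subtract $(\ell/P(x))/v(P(x)) = \ell/f(P(x))$ from the integral identity to obtain
\[
\ell \left[\frac{1}{f(\rho_0^-)} - \frac{1}{f(P(x))}\right] = \int_x^{x+\ell/P(x)} \left[\frac{1}{v(A^P(z))} - \frac{1}{v(P(x))}\right] dz.
\]
The integrand is nonnegative (since $A^P(z)\geq P(x)$ and $v$ is decreasing) and pointwise bounded by $L_v(A^P(z)-P(x))$ by the Lipschitz assumption on $1/v$; combined with $A^P(z)\leq P(z+h)$ (from monotonicity of $P$ and the fact that $w$ is supported in $[0,h]$, so every weight in $A^P(z)$ samples $P$ at a point no larger than $z+h$) and $P(z+h)\leq P(x+h+\ell/P(x))$ on the integration interval, this bounds the right-hand side by $L_v\cdot(\ell/P(x))\cdot[P(x+h+\ell/P(x))-P(x)]$, and~\eqref{eq:LDP} follows after rearrangement. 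The main obstacle I anticipate is the rigorous backward construction: verifying that $F$ is genuinely Lipschitz in its second argument through the composition with $\Psi$ at the sliding point $L^P(x)$ and through the weights $w_{i,k}$, whose endpoints themselves depend on $P(x)$, and then matching the Picard extensions cleanly across successive layers while maintaining $P\in(0,\Psi(x_0))$ so that no blow-up occurs. The edge estimate $A^P(z)\leq P(z+h)$ also demands careful bookkeeping when the support $[0,h]$ of $w$ is cut off inside the last non-zero weight $w_{i,m}$.
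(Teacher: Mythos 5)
Your proposal follows essentially the same route as the paper's proof: the method of steps for existence and uniqueness (exploiting that the delay in \eqref{eq:P} is at least $\ell$, so each backward layer reduces to a scalar ODE with already-known delayed data), Lemma~\ref{lm:mono} and \eqref{PPcc} for monotonicity, positivity and the Lipschitz bound, the integral identity of Lemma~\ref{lm:period} evaluated at $x=x_0$ and in the limit $x\to-\infty$ for \eqref{eq:Ptp}, and the same subtraction-and-comparison argument (using $A^P(z)\le P(z+h)$ and monotonicity) for \eqref{eq:LDP}. The only small divergence is your direct unimodality argument for $\rho_0^-<\hat\rho$, where the paper instead relies on Lemma~\ref{lm:A}; your version is sound and slightly more self-contained.
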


\begin{proof} 
\textbf{(1).} 
Since~\eqref{eq:P} is a delay differential equation  
with delay of at least $\ell$, the
existence and uniqueness can be  established by the standard method of steps, 
see~\cite{MR0477368}. 
We define the intervals 
\[
I_k \;\dot=\; [x_0-(k+1)\ell, ~x_0-k \ell], \qquad k\in\mathbb{Z}^+.
\]
Consider first the interval $I_0=[x_0-\ell,x_0]$. 
For $x\in I_0$, the right hand side of~\eqref{eq:P} involve only values with $x\ge x_0$,
which are given as initial condition.  The existence and uniqueness of solutions follow from 
standard theory for scalar ODEs. 
Furthermore, since $\Psi$ is monotone and positive, by Lemma~\ref{lm:mono} 
the solution $P(\cdot)$ is monotone and positive on $I_0$.
Finally, thanks to~\eqref{PPcc} we have
\[
P'(x)  \le \ell^{-1} P^2(x)  \le \ell^{-1} \Psi^2(x_0),
\]
proving~\eqref{P-Lip}.

\textbf{(2).}  
The argument can be repeated on  all subsequent intervals $I_k$ for $k\in\mathbb{Z}^+$,
leading to the existence and uniqueness of Lipschitz solution on  $x\le x_0$. 

\textbf{(3).}  
Furthermore, by the periodic property in Lemma~\ref{lm:period}, we have 
\[
\int_{x}^{x+\ell/P(x)} \frac{1}{v(A^P(z))}\;dz = t_p, \qquad \forall x \le x_0.
\]
Setting $x=x_0$, we have 
\[
t_p = \int_{x_0}^{x_0+\ell/\Psi(x_0)} \frac{1}{v(A^\Psi(z))}\;dz,
\]
and together with Lemma~\ref{lm:period}, we obtain~\eqref{eq:Ptp}. 

\textbf{(4).} By the periodic property in step (3), we have, for every $x<x_0$, 
\[
\frac{\ell}{\bar f_0} = \int_x^{x+\ell/P(x)}  \frac{1}{v(A^{P}(z))} dz , \qquad \bar f_0=f(\rho^-_0).
\]
Subtracting from it the identity
\[
\frac{\ell}{f(P(x))} = \frac{\ell}{P(x) v(P(x))}  =
\int_x^{x+\ell/P(x)}  \frac{1}{v(P(x))} dz,
\]
we get
\begin{equation}\label{vv}
\frac{\ell}{\bar f_0} - \frac{\ell}{f(P(x))} =
\int_x^{x+\ell/P(x)} \left[  \frac{1}{v\left(A^{P}(z)\right)} -\frac{1}{v(P(x))} \right] dz .
\end{equation}
Since $P$ is monotone increasing, we have, for any $z\in[x, x+\ell/P(x)]$, 
\[
A^{P}(z) <  A^{P}(x+\ell/P(x)) < P(x+\ell/P(x) + h).
\]
Combining this with~\eqref{vv},  we get
\begin{eqnarray*}
\frac{\ell}{\bar f_0} - \frac{\ell}{f(P(x))}
&< &
\frac{\ell}{P(x)}
\left[\frac{1}{v(P(x+\ell/P(x) + h))} - \frac{1}{v(P(x))} \right] \\
&< &
 \frac{\ell}{P(x)}\cdot L_v \cdot
\left[P(x+\ell/P(x) + h) - P(x)\right] ,
\end{eqnarray*}
where $L_v$ is the Lipschitz constant for the map $(1/v)$.
This proves~\eqref{eq:LDP}.
\end{proof}

\subsection{Asymptotic value problems}\label{sec:2.3}

\begin{theorem}[Asymptotic value problem]\label{th2b}
Let $w(\cdot)$ satisfy~\eqref{eq:wc1} and let $v(\cdot)$ satisfy~\eqref{eq:vc}.
Consider the asymptotic value problem for~\eqref{eq:P},
with asymptotic conditions
\begin{equation}\label{eq:BC}
\lim_{x\to -\infty} P(x)~=~\rho^-,\qquad 
\lim_{x\to \infty} P(x) ~=~\rho^+.
\end{equation}
If $\rho^-,\rho^+ \in\mathbb{R}$ satisfy
\bel{rpm}
0<\rho^-< \hat\rho<\rho^+<1, \qquad f(\rho^-)=f(\rho^+)=\bar f,
\eeq
then there exists  a monotone increasing, Lipschitz continuous solution $P(\cdot)$,
defined for all  $x\in\R$.

Furthermore, the solutions are unique up to horizontal shifts, in the following sense. 
If $P_1$ and $P_2$ are two solutions of the same asymptotic value problem,
then there exists a constant $c\in\mathbb{R}$ such that 
$ P_1 (x) = P_2(x+c)$ for all $x\in\mathbb{R}$.
\end{theorem}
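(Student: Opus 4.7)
The plan is to reduce the asymptotic two-point boundary value problem to the backward initial value problem of Theorem~\ref{th:2f}, using the periodicity identity of Lemma~\ref{lm:period} to pin down the left asymptote automatically. For the forward piece on $[x_0,+\infty)$, the delay equation~\eqref{eq:P} has no usable forward initial value problem since its right-hand side depends on $P$ at points to the right of $x$. Instead I would construct the required $\Psi$ by a fixed-point argument localized near the equilibrium $\rho^+$. Writing $\Psi(x)=\rho^+-e^{-\lambda_+^\ell x}\eta(x)$, where $\lambda_+^\ell>0$ is the characteristic root from Lemma~\ref{lm:A}, substituting into~\eqref{eq:P} and integrating once recasts the problem as a fixed-point equation for $\eta$ on an exponentially weighted Banach space on $[x_0,+\infty)$. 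For $x_0$ sufficiently large the linear part is a strict contraction, by the simplicity of the positive root of the characteristic equation~\eqref{eq:G22}, while the nonlinear remainder is of order $\eta^2$; Banach's fixed-point theorem then produces a monotone $\Psi\in(0,1)$ with the prescribed exponential decay, meeting the hypotheses of Theorem~\ref{th:2f}.

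With $\Psi$ in hand, I would feed it into Theorem~\ref{th:2f} as the initial datum, obtaining a Lipschitz monotone extension $P$ on $(-\infty,x_0]$ that agrees with $\Psi$ on $[x_0,+\infty)$. Since $\Psi$ already solves~\eqref{eq:P} on $[x_0,+\infty)$, the one-sided derivatives of $P$ match at $x_0$ and the concatenation is a solution of the delay equation on all of $\mathbb{R}$, with $P(+\infty)=\rho^+$ by construction and $P(-\infty)=\rho_0^-\in(0,\hat\rho)$. To identify $\rho_0^-=\rho^-$ I would invoke Lemma~\ref{lm:period}: the period $t_p$ is constant along $P$, formula~\eqref{eq:tp1} at $+\infty$ gives $t_p=\ell/f(\rho^+)=\ell/\bar f$, and formula~\eqref{eq:tp2} at $-\infty$ gives $t_p=\ell/f(\rho_0^-)$; hence $f(\rho_0^-)=\bar f=f(\rho^-)$, and the unimodality of $f$ recalled in~\eqref{eq:stag}, together with $\rho_0^-<\hat\rho$, forces $\rho_0^-=\rho^-$.

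For uniqueness up to horizontal shifts I would use a sliding comparison argument. Given two solutions $P_1,P_2$, consider the family of translates $P_2^c(x)=P_2(x+c)$. The exponential tail control of Lemma~\ref{lm:A} yields $P_2^c>P_1$ on $\mathbb{R}$ for $c$ sufficiently large and the reverse inequality for $c$ sufficiently negative, so $c^*=\inf\{c:P_2^c\ge P_1\}$ is finite. At $c^*$ the profiles $P_1$ and $P_2^{c^*}$ touch from above without crossing. A strong comparison argument applied to~\eqref{eq:P} at an interior touching point, using the strict monotonicity of $v(\cdot)$ together with the delay structure, forces $P_1\equiv P_2^{c^*}$; if the touching is only asymptotic, the same conclusion follows because the asymptotic expansion of Lemma~\ref{lm:A} has a unique leading rate $\lambda_\pm^\ell$, so matching the leading coefficient pins down $c^*$ and hence $P_1=P_2^{c^*}$.

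The main obstacle I expect is the forward construction in the first step: one must simultaneously control the nonlocal delay structure (where $L^P$ and $A^P$ both depend on $P$ itself), enforce the prescribed exponential decay, and preserve monotonicity and positivity, all within a single fixed-point scheme. These requirements are naturally coupled through the choice of an exponentially weighted function space that is large enough to accommodate the nonlinear iteration yet tight enough for the linear part to contract, which in turn rests on the characteristic analysis of Lemma~\ref{lm:A}. The sliding uniqueness argument is, by comparison, more routine, but verifying the strong comparison principle at a touching point still requires careful use of the fact that $A^P(L^P(x))-A^P(x)$ is strictly ordered between the two profiles unless they coincide on the whole averaging window.
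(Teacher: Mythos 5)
Your overall architecture is genuinely different from the paper's, and its load-bearing step is exactly the one the paper is engineered to avoid. You propose to first build an \emph{exact} solution $\Psi$ of~\eqref{eq:P} on $[x_0,+\infty)$ by a contraction in an exponentially weighted space, and only then extend backward via Theorem~\ref{th:2f}. The paper never solves the equation forward: it takes $\Psi(x)=\rho^+-e^{-\lambda_+^\ell x}$ merely as \emph{initial data} (not a solution) posed on $[x_n,+\infty)$ for a sequence $x_n\to+\infty$, solves backward on $(-\infty,x_n]$ by Theorem~\ref{th:2f}, normalizes each profile so that $\hat P^{(n)}(0)=\hat\rho$, extracts a limit by Helly and Arzel\`a--Ascoli, passes to the limit in the integral (periodicity) form of the equation via Lemma~\ref{lm:period}, and finally verifies the asymptotes $\rho^\pm$ using the quantitative estimate~\eqref{eq:LDP}. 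Your forward construction, by contrast, is asserted but not supported by the tools available. The claim that ``the linear part is a strict contraction for $x_0$ large, by the simplicity of the positive root of~\eqref{eq:G22}'' does not follow from Lemma~\ref{lm:A}: that lemma locates the unique \emph{real} root of the characteristic equation, whereas a stable-manifold/contraction argument for a functional differential equation with advanced arguments requires control of the full set of complex characteristic roots (a spectral gap) to invert the linearization on the weighted space. On top of that, the delays here are state-dependent ($L^P$ and $A^P$ depend on $P$ itself), and translation invariance means solutions decaying to $\rho^+$ come in a one-parameter family, so the map cannot have a unique fixed point without an additional normalization that your scheme does not specify. As written, the first step of your existence proof is a genuine gap, not a routine verification.

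The rest of the plan is sound in outline and partly overlaps with the paper: identifying the left asymptote through the period identity of Lemma~\ref{lm:period} (so that $f(\rho_0^-)=f(\rho^+)=\bar f$ and unimodality of $f$ with $\rho_0^-<\hat\rho$ forces $\rho_0^-=\rho^-$) is essentially what the paper does, and is clean. For uniqueness, your sliding argument is close in spirit to the paper's, which shifts the profiles until their graphs cross, takes the rightmost crossing point $\hat x$, notes $L^{P_1}(\hat x)=L^{P_2}(\hat x)$ while $A^{P_1}>A^{P_2}$ on the relevant window, and contradicts the equality of periods. However, your fallback for the case where the translates only ``touch asymptotically'' leans on matching a leading coefficient in an asymptotic expansion $\rho^+-Me^{-\lambda_+^\ell x}+o(e^{-\lambda_+^\ell x})$; Lemma~\ref{lm:A} is a formal linearization and does not establish that every solution admits such an expansion with a well-defined coefficient $M$, so this case is not actually covered by anything proved in the paper. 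You would either need to prove that expansion or, as the paper does, argue directly with a crossing configuration.
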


\begin{proof}
\textbf{Existence of solutions.}  The proof for the existence of solutions take several steps.

{\bf 1.}  A solution will be constructed by taking the limit of a sequence of approximations.
Let $\lambda_+^\ell$ be the exponential rate given in Lemma~\ref{lm:A}
for the asymptotic condition $\lim_ {x\to+\infty}P(x)=\rho^+$,  and let
\begin{equation}\label{eq:Psi}
\Psi(x) = \rho^+ - e^{-\lambda_+^\ell x}.
\end{equation}
Let $\{x_n: n\in\mathbb{N}, x_n\in\mathbb{R}\}$  
satisfy $x_n<x_{n+1}$ for all $n$, and $\lim_{n\to\infty} x_n = \infty$, 
and let $P^{(n)}(\cdot)$ be the unique solution for the initial value problem of~\eqref{eq:P} 
with initial condition $P^{(n)}(x)=\Psi(x)$  for $x\ge x_n$,
established in Theorem~\ref{th:2f}. 
Then $P^{(n)}(\cdot)$ is Lipschitz,  
positive and monotone increasing for  $x\in \,]-\infty, x_n]$. 
Denoting
\[
\rho^-_n \;\dot=\; \lim_{x\to-\infty} P^{(n)}(x),
\]
by Lemma~\ref{lm:A} we have $\rho^-_n < \hat\rho$. 
We further claim that 
\begin{equation}\label{eq:ff}
\lim_{n\to \infty}  f(\rho^-_n) =  f(\rho^+) = f(\rho^-).
\end{equation}
Indeed, denoting 
\[
\bar f_n \;\dot=\; \ell \cdot 
\left[\int_{x_n}^{x_n+\ell/\Psi(x_n)} \frac{1}{v(A^\Psi(z))}\;dz\right]^{-1}, 
\]
by Theorem~\ref{th:2f}, we have 
\[
\frac{\ell}{f(\rho^-_n)} = \frac{\ell}{\bar f_n}.
\]
Let $\ve>0$. There exists an $N$, sufficiently large,  such that 
$e^{-\lambda_+^\ell x_n} < \ve$ for all $n>N$. 
Using~\eqref{eq:Psi}, we compute, for $n>N$, 
\begin{eqnarray*}
\frac{\ell}{f(\rho^-_n)} ~<~ \int_{x_n}^{x_n+\ell/(\rho^+-\ve)} \frac{1}{v(\rho^+)}\;dz
&=&\frac{\ell}{(\rho^+-\ve)v(\rho^+)}, \\
\frac{\ell}{f(\rho^-_n)} ~>~\int_{x_n}^{x_n+\ell/\rho^+} \frac{1}{v(\rho^+-\ve)}\;dz
&=& \frac{\ell}{\rho^+v(\rho^+-\ve)}. 
\end{eqnarray*}
Since $\ve>0$ is arbitrary, we conclude that 
\[ \lim_{n\to\infty} \frac{\ell}{\bar f_n} =
\lim_{n\to\infty} \frac{\ell}{f(\rho^-_n)} = \frac{\ell}{\rho^+ v(\rho^+)}=\frac{\ell}{f(\rho^+)},
\]
which proves~\eqref{eq:ff}. This further implies that 
\bel{r-l}
\lim_{n\to\infty} \rho^-_n=\rho^-, \qquad
\lim_{n\to\infty} \bar f_n = \bar f = f(\rho^-) = f(\rho^+).
\eeq
\v
{\bf 2.} By Theorem~\ref{th:2f}, $P^{(n)}$ is Lipschitz continuous, with Lipschitz
constant $\ell^{-1}$. 
Since we are assuming $\rho^-<\hat\rho<\rho^+$, by \eqref{r-l} and \eqref{P-Lip}
it follows that, for every $n$ large enough, there exists some point $\xi_n$ 
such that $P^{(n)}(\xi_n)= \hat\rho$.
We can thus consider the sequence of shifted profiles 
\bel{HPn} \Hat P^{(n)}(x)~\doteq~P^{(n)}(x-\xi_n).\eeq
This guarantees that $\Hat P^{(n)}(0)=\hat\rho$, for every $n$ large enough.
\v
{\bf 3.} Since all functions $\Hat P^{(n)}$ are uniformly bounded and increasing, 
using Helly's compactess theorem,  by possibly taking a subsequence  we obtain
the pointwise convergence 
\bel{pnp}\Hat P^{(n)}(x)~\to~P(x)\eeq
for some limit function $P(\cdot)$.  Since the functions $ \Hat P^{(n)}(\cdot)$ are 
also uniformly Lipschitz continuous, by the 
Arzel\`{a}-Ascoli theorem   the convergence \eqref{pnp}  is uniform for $x$ in bounded
intervals.  

From the properties of all  $ \Hat P^{(n)}$ it immediately follows that $P$ 
is nondecreasing and uniformly Lipschitz continuous.
Moreover 
\bel{pn0}P(0) ~=~ \lim_{n\to\infty} \Hat P^{(n)}(0)~=~\hat\rho.\eeq 

By Lemma~\ref{lm:period}, the differential equation \eqref{eq:P}  
can be written in the integral form for $\Hat P^{(n)}$,
\[{\color{black}
\int_x^{x+\ell/\Hat P^{(n)}(x)}  v^{-1}(A^{\Hat P^{(n)}} (z))\; dz = \bar f_n}.
\]
Recalling that the convergence $\Hat P^{(n)}(x)~\to~P(x) $ is uniform on 
 bounded sets, we conclude that 
the limit function $P$ satisfies the integral equation
\[
\int_x^{x+\ell/ P(x)}  v^{-1}(A^{ P} (z))\; dz = \bar f.
\]
By Lemma~\ref{lm:period},  $P$  provides a solution to~\eqref{eq:P}.

\medskip

{\bf 4.} It remains to prove the asymptotic limits 
\bel{Plim}\lim_{x\to -\infty}P(x)~=~\rho^-,\qquad\qquad\lim_{x\to +\infty}P(x)~=~\rho^+.\eeq
Since $P(\cdot)$ is nondecreasing and bounded, it is clear that these two limits exists.
Assume that 
\[ 
P^+ \;\dot=\; \lim_{x\to\infty} P(x) , \qquad  \mbox{and}~ P^+ < \rho^+. 
\]
From Lemma~\ref{lm:A} we have $P^+ > \hat \rho$. 
Let $\epsilon>0$. There exist $M\in\mathbb{R}$ sufficiently large, such that
$P^+ -P(x) < \epsilon$ for every $x>M$. This implies
\[ P(x+\ell/P(x)+h) - P(x) < \epsilon\qquad \forall x>M.\]
However, using~\eqref{eq:LDP} we have 
\[
P(x+\ell/P(x)+h) - P(x)  > 
\frac{P^+-\epsilon}{L_v} \left[ \frac{1}{f(\rho^+)} - \frac{1}{f(P^+)}\right],
\]
a contradiction. We conclude that $P^+=\rho^+$. 
A similar analysis yields the asymptotic limit at $x\to-\infty$.
This proves the existence of the asymptotic value problem. 

We remark that, if $P(\cdot)$ is a solution to the asymptotic value problem, then 
any horizontal shift of $P(\cdot)$ is also a  solution.

\bigskip

\noindent\textbf{Uniqueness.}  
Assume that there exist two solutions $P_1$ and $P_2$ 
that are distinct after any horizontal shift. 
Then, we can consider some shifted versions of $P_1,P_2$ 
such that their graphs cross each other.
Let $\hat x$ be the rightmost point where they cross, and assume
\begin{equation}\label{PA}
P_1(\hat x) =P_2(\hat x),
\qquad P_1(x) > P_2(x) \quad \forall x>\hat x.
\end{equation}

Denote by $A^{P_1}, A^{P_2}$ and $L^{P_1}, L^{P_2}$ 
the averaging operators and the leader operators corresponding to $P_1,P_2$, respectively.
By the assumptions~\eqref{PA}, we have, for all $x\ge \hat x$, 
\begin{equation}\label{eq:ag}
A^{P_1} (x) > A^{P_2}(x), 
\qquad  \mbox{therefore}
\qquad \frac{1}{v\left(A^{P_1}(x)\right)} >\frac{1}{v\left(A^{P_2}(x)\right)}.
\end{equation}
Observe that we have 
\[
L^{P_1}(\hat x) =\hat x + \frac{\ell}{P_1(\hat x)} = \hat x + \frac{\ell}{P_2(\hat x)} = L^{P_2}(\hat x).
\]
Since both profiles admit the same period, we have
\[
\int_{\hat x}^{L^{P_1}(\hat x)} \frac{1}{v(A^{P_1}(z))}\; dz =
\int_{\hat x}^{L^{P_2}(\hat x)} \frac{1}{v(A^{P_2}(z))}\; dz,
\]
a contradiction to~\eqref{eq:ag}. Thus, we conclude that the solutions of the  
asymptotic value problem are unique, up to horizontal shifts. 
\end{proof}

\paragraph{Sample profiles.}
Sample profiles for $P(\cdot)$ with various $(\rho^-,\rho^+)$ values
and $w(\cdot)$ functions are given in Figure~\ref{fig:Rs}.  
The profiles are generated using the approximate solutions described in 
Theorem~\ref{th2b}. 

\begin{figure}[htbp]
\begin{center}
\includegraphics[width=7.5cm]{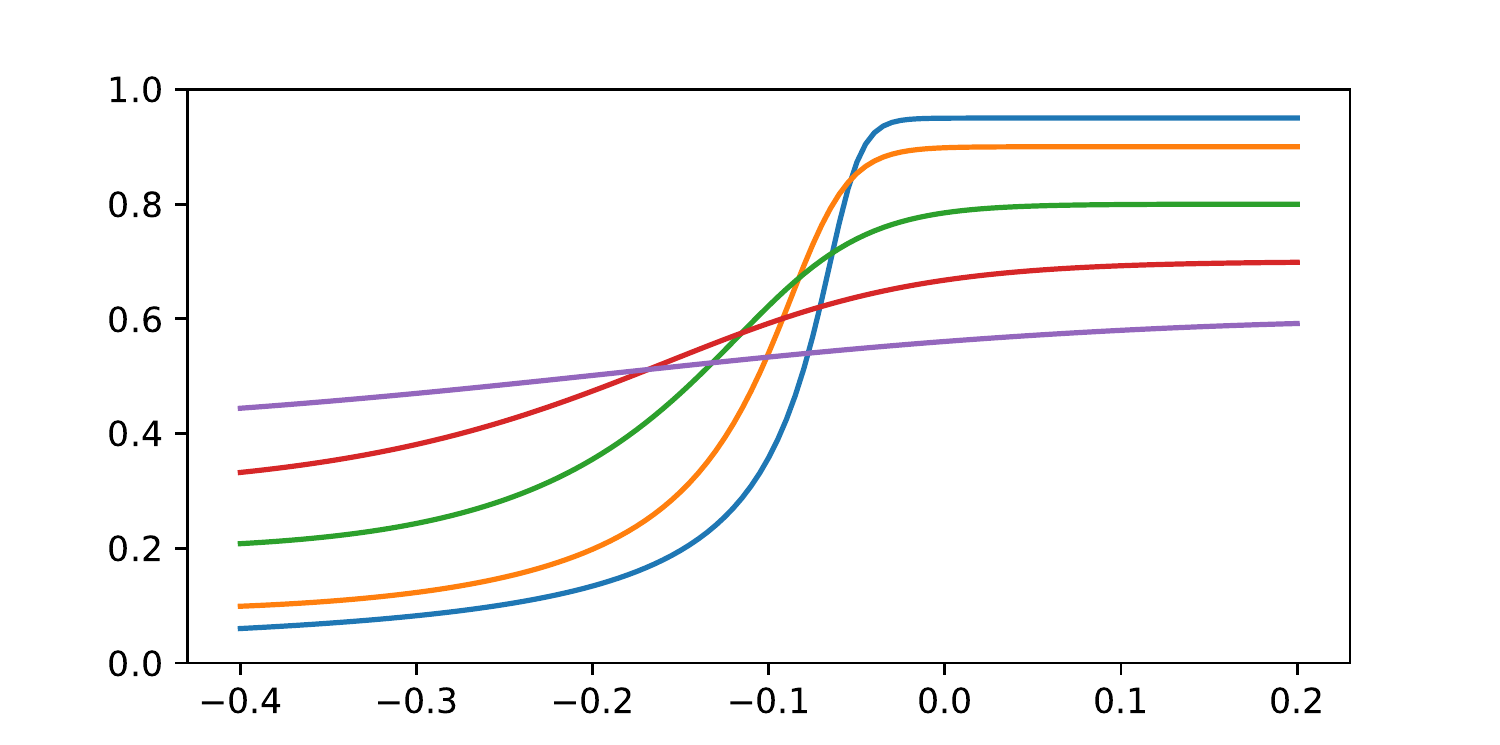}$\quad$
\includegraphics[width=7.5cm]{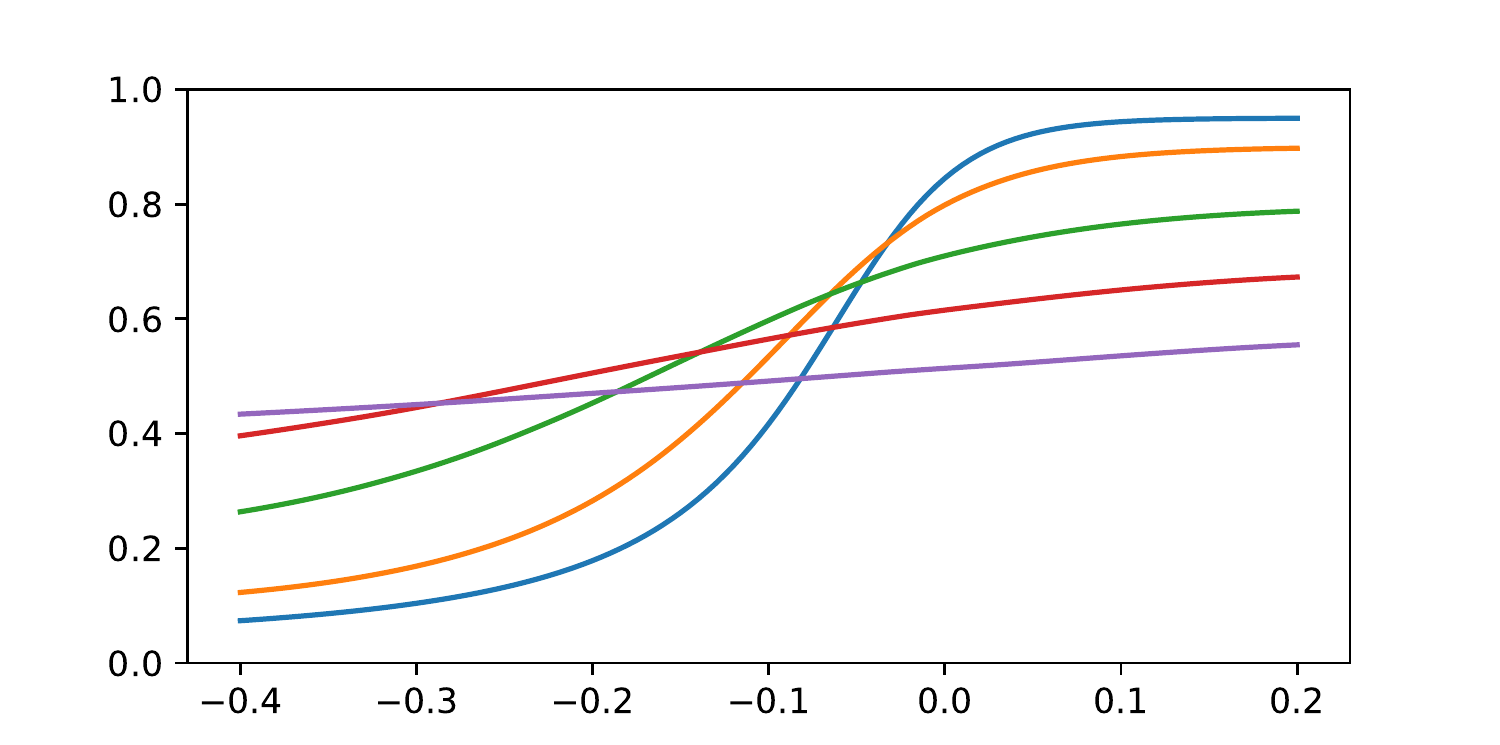}
\caption{Typical profiles $P(\cdot)$ with $v(\rho)=1-\rho, h=0.2, \ell=0.01$ 
and various $\rho^\pm$ values.
Left: $w(x)=\frac{2}{h}-\frac{2x}{h^2}$ on $(0,h)$, where $w'<0$. 
Right: $w(x) = \frac{2x}{h^2}$ on $(0,h)$, where $w'>0$. }
\label{fig:Rs}
\end{center}
\end{figure}

\subsection{Stability of the traveling waves}

It is natural to assume that the road condition right in front of the driver is 
more important than 
the condition further ahead. This leads to the additional assumption 
\begin{equation}\label{eq:wc2}
  w'(x) \le 0\quad\forall x\in(0,h).
\end{equation}
Furthermore, we also  assume  
\begin{equation}\label{eq:vc2}
 v''(\rho)\le0 \quad \forall \rho\in[0,1].
\end{equation}
This assumption gives $f''(\rho) <0$ for all $\rho\in[0,1]$, where $f(\rho)=\rho v(\rho)$. 
With these additional assumptions, 
the traveling wave profiles turn out to be local attractors for solutions of 
the FtLs model~\eqref{FtLs}.
Specifically, with mild assumptions on the initial condition,
the traveling wave profiles provide time asymptotic limits for 
the solutions of the FtLs model~\eqref{FtLs}.

\begin{theorem}\label{th3b}
Let $w(\cdot)$ satisfy~\eqref{eq:wc1} and~\eqref{eq:wc2}, and $v(\cdot)$ satisfy~\eqref{eq:vc} and~\eqref{eq:vc2}, and let  $\rho^-,\rho^+\in\mathbb{R}$ be given with
  \[
0< \rho^-<\hat\rho<\rho^+<1,\quad
f(\rho^-)=f(\rho^+) = \bar f. 
\]
Let $P(\cdot)$ be the solution for the asymptotic value problem, satisfying~\eqref{eq:P},
the asymptotic conditions~\eqref{eq:BC},
and the additional condition $P(0)=\hat\rho$.
Let $\{z_i(t): i\in\mathbb{Z}\}$ be the solution of~\eqref{FtLs} with initial condition $\{z_i(0): i\in\mathbb{Z}\}$,
and  let $\{\rho_i(t): i\in\mathbb{Z}\}$
be the corresponding discrete densities, defined in~\eqref{defrho}.

Assume that there exist two constants $c_1,c_2 \in\mathbb{R}$, 
such that the initial condition satisfies
\begin{equation}\label{P1P2}
P(z_i +c_1)  \ge \rho_i(0) \ge P(z_i+c_2), \qquad \forall i\in\mathbb{Z},
\end{equation}
Then, there exists a constant $\hat c \in\mathbb{R}$, 
such that 
\begin{equation}\label{hatc}
\lim_{t\to\infty} \left[\rho_i(t)  - P(z_i(t)+\hat c) \right] =0, \qquad \forall i\in\mathbb{Z}.
\end{equation}
\end{theorem}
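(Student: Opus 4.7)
The plan is to sandwich the given solution $\{\rho_i(t)\}$ between two exact traveling wave solutions of \eqref{FtLs} corresponding to the shifts $c_1$ and $c_2$, and then show the sandwich contracts to a single shift $\hat c$. The argument has three parts: construction of reference traveling wave solutions parametrized by shift, a comparison principle for the density dynamics \eqref{rhodot}, and an asymptotic contraction argument forcing the upper and lower shifts to merge.

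\textbf{Reference solutions and comparison principle.} By Lemma~\ref{lm:period}, for each $c\in\mathbb{R}$ there is an exact solution $\{\bar z_i^{(c)}(t)\}$ of the FtLs model whose densities satisfy $\bar\rho_i^{(c)}(t) = P(\bar z_i^{(c)}(t)+c)$, constructed from the spacing rule $\bar z_{i+1}^{(c)} - \bar z_i^{(c)} = \ell/P(\bar z_i^{(c)}+c)$ evolved under \eqref{FtLs}. Anchoring these references to $\{z_i(0)\}$, hypothesis \eqref{P1P2} reads $\bar\rho_i^{(c_2)}(0) \le \rho_i(0) \le \bar\rho_i^{(c_1)}(0)$ for every $i$. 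The next step is a comparison principle for \eqref{rhodot}: if two FtLs solutions satisfy $\rho_i^A(0) \le \rho_i^B(0)$ for all $i$, then $\rho_i^A(t)\le\rho_i^B(t)$ for every $t\ge 0$. I would argue by contradiction, examining the first time $t^\ast$ at which the ordering fails at some index $i^\ast$: the monotone-weight hypothesis \eqref{eq:wc2} ensures the nonlocal averages $\rho_i^\ast$ preserve the pointwise ordering, and combined with the sign of $v(\rho_{i^\ast}^\ast)-v(\rho_{i^\ast+1}^\ast)$ in \eqref{rhodot} this yields $\dot\rho_{i^\ast}^B\ge\dot\rho_{i^\ast}^A$ at $t^\ast$, contradicting the breakdown.

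\textbf{Collapse of the sandwich.} The comparison principle gives $\bar\rho_i^{(c_2)}(t)\le\rho_i(t)\le\bar\rho_i^{(c_1)}(t)$ for all $t, i$, and strict monotonicity of $P$ allows us to define the local shift $c_i(t)$ by $\rho_i(t) = P(z_i(t)+c_i(t))$. Set $c^+(t) = \sup_i c_i(t)$ and $c^-(t) = \inf_i c_i(t)$; applying the comparison principle repeatedly with reference shifts $c^\pm(t_0)$ anchored at increasing times $t_0$, I would show $c^+$ is non-increasing and $c^-$ non-decreasing. The crucial final step is to prove $c^+(t) - c^-(t) \to 0$ as $t\to\infty$: this should follow from a strict contraction estimate exploiting the concavity \eqref{eq:vc2} of $v$ and the exponential relaxation rates $\lambda_\pm^\ell$ from Lemma~\ref{lm:A}, which together force any persistent spread between $c^+$ and $c^-$ to dissipate. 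The common limit $\hat c$ then satisfies \eqref{hatc}.

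\textbf{Main obstacle.} The principal difficulty is this final contraction step. Showing that a nontrivial spread $c^+-c^->0$ strictly decays rather than persists requires a delicate analysis of how local perturbations of the shift field $c_i(t)$ propagate through the nonlocal coupling, and is precisely where both \eqref{eq:wc2} and \eqref{eq:vc2} enter non-trivially: the monotone weight $w$ smears local differences while the concavity of $v$ damps them. By comparison, the comparison principle and the sandwich itself are relatively standard once the structural assumptions are in place.
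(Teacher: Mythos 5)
Your overall strategy (squeeze the solution between shifted profiles and show the upper and lower shift envelopes merge) matches the spirit of the paper's proof, but both load-bearing steps are unsound as you have set them up. The index-wise comparison principle between two distinct FtLs solutions does not follow from your first-crossing argument and is the wrong tool here. At the first crossing time the prefactor $\rho_{i^*}^2$ in \eqref{rhodot} agrees for the two solutions, so you would need
\[
v\bigl(\rho^{*,A}_{i^*}\bigr) - v\bigl(\rho^{*,A}_{i^*+1}\bigr) \;\le\; v\bigl(\rho^{*,B}_{i^*}\bigr) - v\bigl(\rho^{*,B}_{i^*+1}\bigr),
\]
and this is a difference of two terms that both move the same way under the ordering: even if $\rho^{*,A}_j\le\rho^{*,B}_j$ for $j=i^*,i^*+1$, you get $v(\rho^{*,A}_{i^*})\ge v(\rho^{*,B}_{i^*})$ \emph{and} $v(\rho^{*,A}_{i^*+1})\ge v(\rho^{*,B}_{i^*+1})$, so the difference can go either way. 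Worse, the weights $w_{i,k}$ in \eqref{eq:wik} depend on the car positions, which differ between the two solutions, so even the ordering of the averages is not guaranteed by an index-wise ordering of the densities. The paper sidesteps all of this by never comparing two dynamic solutions: it compares the single solution $\{\rho_i(t)\}$ against a \emph{stationary} profile $\hat P$ evaluated along the solution's own positions $z_i(t)$, touching from above at a rightmost index $k$, and proves the strict inequality $\dot\rho_k/\dot z_k<\hat P'(z_k)$ by factoring both sides as $A\,B\,C$ and ordering each factor separately --- $A$ from $v'\le0$, $B$ from $v''\le0$ (\eqref{eq:vc2}), and $C$ from $w'\le0$ (\eqref{eq:wc2}) together with an explicit rewriting of $\rho^*_{k+1}-\rho^*_k$ as an integral against $w(y-z_{k+1})-w(y-z_k)\ge 0$. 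Relatedly, your ``anchoring'' is off: a profile-generated car distribution has its own spacings $\ell/P(\bar z_i+c)$ and will not share the positions $z_i(0)$, whereas hypothesis \eqref{P1P2} compares $\rho_i(0)$ with $P$ evaluated at the actual $z_i(0)$.

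The second gap is that the collapse of the sandwich is exactly the part you defer to a hoped-for ``strict contraction estimate,'' and that is where the theorem lives. In the paper the strictness comes directly from the touching-point inequality: introducing the shift coordinates $\phi_i(t)=\Phi(z_i(t),\rho_i(t))$, Step 1 yields $\frac{d}{dt}\phi_{k_{\max}}(t)<0$ and $\frac{d}{dt}\phi_{k_{\min}}(t)>0$ at the extremal indices, and this strict differential inequality is what drives $c^+-c^-\to0$; the exponential rates $\lambda_\pm^\ell$ of Lemma~\ref{lm:A} play no role in the stability argument. Without a proved strict inequality at the touching index, your $c^\pm(t)$ are merely monotone and bounded and could converge to two different limits, in which case \eqref{hatc} does not follow.
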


\begin{proof}
\textbf{Step 1.} 
We first observe that assumption~\eqref{P1P2} implies
\[\lim_{i\to \pm\infty} \rho_i(0) = \rho^\pm. \]
Fix a time $t\ge 0$, and let $\{z_i(t):i\in\mathbb{Z}\}$ be the solution of the FtLs model 
and $\{ \rho_i(t):i\in\mathbb{Z}\}$ the corresponding discrete densities.
Denote by $\hat P(\cdot)$ the profile that satisfies 
\[ 
\hat P(x) = P(x+\tilde c) \quad \forall x\in\mathbb{R}, ~\mbox{for some}~\tilde c\in\mathbb{R}, 
\qquad \mbox{and}\quad 
\hat P(z_i(t)) \ge \rho_i(t), \quad \forall i \in\mathbb{Z}.
\]
Let $k$ be an index such that 
\begin{equation}\label{eq:w1}
\hat P(z_k(t)) =\rho_k(t), \qquad  \mbox{and}\quad 
\hat P(z_i(t)) > \rho_i(t) \quad \forall i >k,
\end{equation}
and hence
\[
L^{\hat P}(z_k) = z_{k+1}=z_k+\frac{\ell}{\hat P(z_k)}.
\]

We claim that 
\begin{equation}\label{eq:ww}
\frac{\dot\rho_k}{\dot z_k}  < \hat P'(z_k). 
\end{equation}
Indeed,~\eqref{eq:w1} and $v'\le 0$ imply
\begin{equation}\label{eq:w2}
 A^{\hat P}(z_k) > \rho^*_k, \quad A^{\hat P}\left(L^{\hat P}(z_k)\right) > \rho^*_{k+1}.
\end{equation}
Furthermore, using $v'\le 0$ we get
\begin{equation}\label{eq:w3}
v(A^{\hat P}(z_k)) \le v(\rho^*_k).  
\end{equation}

Equation~\eqref{eq:P} can be written as
\[
\hat P'(z_k) =  A_1 B_1 C_1,
\]
where
\[
A_1 \,\dot=\, \frac{\hat P(z_k)}{v(A^{\hat P}(z_k))} ,\quad 
B_1 \,\dot=\,
\frac{v(A^{\hat P}(z_k))-v(A^{\hat P}(L^{\hat P}(z_k)))}{A^{\hat P}(L^{\hat P}(z_k)) -A^{\hat P}(z_k) } ,
\quad
C_1 \,\dot=\, 
\frac{A^{\hat P}(L^{\hat P}(z_k)) -A^{\hat P}(z_k) }{L^{\hat P}(z_k)-z_k} .
\]
On the other hand, equations~\eqref{FtLs}  and~\eqref{rhodot} lead to
\[
\frac{\dot\rho_k}{\dot z_k} =  A_2 B_2 C_2,
\]
where 
\[
A_2 \,\dot=\, \frac{\rho_k}{v(\rho^*_k)},\quad 
B_2 \,\dot=\, \frac{v(\rho^*_k)-v(\rho^*_{k+1})}{ \rho^*_{k+1} - \rho^*_k}, \quad
C_2 \,\dot=\, \frac{ \rho^*_{k+1} - \rho^*_k}{z_{k+1}-z_k} .
\]
By~\eqref{eq:w1} and~\eqref{eq:w2}, we have $A_2 < A_1$. 
Since $v '' \le 0$, by~\eqref{eq:w2} we have $B_2 \le B_1$.
Finally,  to compare $C_1$ and $C_2$, 
let $\{y_i\}$ be the car distribution generated by the profile $\hat P(\cdot)$ with 
$y_k=z_k$.   We also have  $y_{k+1}=z_{k+1}$.
Define the piecewise constant functions $\hat P^\ell(\cdot)$ and $\rho^\ell(\cdot,\cdot)$ as
\begin{eqnarray*}
  \hat P^\ell(x) &\dot= &\hat P(y_i)\qquad \mbox{for}~ x\in [y_i,y_{i+1}), \\
\rho^\ell(x,t) &\dot=& \rho_i(t) \qquad \mbox{for}~ x\in [z_i(t),z_{i+1}(t)).
\end{eqnarray*}
We have 
\begin{equation}\label{eq:w4}
\hat P^\ell(x) > \rho^\ell(x,t)\qquad \forall x > z_{k+1}. 
\end{equation}
Moreover, 
\[
\rho^*_{k+1} - \rho^*_k = {\color{black}-} \int_{z_k}^{z_{k+1}} \rho_k  w(y-z_k)\; dy + 
\int_{z_{k+1}}^\infty \rho^\ell (y,t) \left[w(y-z_{k+1})-w(y-z_k)\right]\; dy,
\]
and 
\[
A^{\hat P}\left(L^{\hat P}(z_k)\right) -A^{\hat P}(z_k)  = {\color{black}-}  \int_{z_k}^{z_{k+1}} \hat P(z_k)  w(y-z_k) dy + 
\int_{z_{k+1}}^\infty \hat P^\ell (y) \left[w(y-z_{k+1})-w(y-z_k)\right]dy.
\]
Since $w'\le0$, we have $ w(y-z_{k+1})-w(y-z_k)\ge 0$.  
Using~\eqref{eq:w1} and~\eqref{eq:w4}, we conclude that $C_2\le C_1$. 
This proves~\eqref{eq:ww}. 

\medskip

On the other hand,  let $\tilde P(\cdot)$ be a profile such that 
\[  \tilde P(x) =P(x+\bar c) \quad \forall x\in\mathbb{R},~ \mbox{for some}~\bar c\in\mathbb{R}, \qquad \mbox{and} \quad 
 \tilde P(z_i(t)) \le \rho_i(t), \quad \forall i.\]
Let $k$ be an index such that 
\begin{equation}\label{eq:w11}
\tilde P(z_k(t)) =\rho_k(t), \qquad  \mbox{and}\quad 
\tilde P(z_i(t)) < \rho_i(t) \quad \forall i >k.
\end{equation}
Then, by a totally similar argument one concludes
\begin{equation}\label{eq:ww1}
\frac{\dot\rho_k}{\dot z_k}  > \tilde P'(z_k). 
\end{equation}

\medskip
\textbf{Step 2.} 
The stability of the stationary profiles 
is a consequence of~\eqref{eq:ww} and~\eqref{eq:ww1}.  
Let $P(\cdot)$ denote the profile with $P(0)=\hat\rho$, where
$\hat\rho$ is defined in~\eqref{eq:stag}. 
Since any horizontal shift of $P(\cdot)$ is also a profile, we have a family of non-intersecting
profiles generated by horizontal shifts of $P(\cdot)$. 
Then, in the $(x,P)$-plane, any point $(x,\rho)$ with $\rho^-<\rho<\rho^+$ 
must lie on a unique profile. 
This motivates the introduction of the following  mapping
\begin{equation}\label{Phi}
\Phi(x,\rho)\;\dot=\;P(0), \quad \mbox{where $P(\cdot)$ is a profile such that}\quad P(x)=\rho.
\end{equation}

Let $\{z_i(t): i\in\mathbb{Z}\}$  be the solution of~\eqref{FtLs} and $\{ \rho_i(t): i\in\mathbb{Z}\}$ the corresponding discrete densities, as in the setting of the theorem. 
Define the functions
\[
\phi_i(t) \;\dot=\;\Phi(z_i(t), \rho_i(t)), \qquad i\in \mathbb{Z}.
\]
Fix a time $t\ge 0$. 
Let $k_{\min}$ and $k_{\max}$ be the indices where $\{\phi_i(t): i\in\mathbb{Z}\}$ attains its minimum 
and maximum values, respectively,  such that
\begin{eqnarray*}
  \phi_i(t)\ge \phi_{k_{\min}}(t)\quad  \forall i\in\mathbb{Z}, ~&\mbox{and}& 
\phi_i(t)> \phi_{k_{\min}}(t) \quad \forall i>k_{\min},\\
  \phi_i(t)\le \phi_{k_{\max}}(t) \quad \forall i\in\mathbb{Z}, ~&\mbox{and}&
\phi_i(t)< \phi_{k_{\max}}(t)  \quad \forall i >k_{\max}.
\end{eqnarray*}

By the results in  Step 1, we now have 
\[
\frac{d}{dt} \phi_{k_{\min}}(t) >0, \qquad 
\frac{d}{dt} \phi_{k_{\max}}(t) <0. 
\]
This further implies that 
\[
\lim_{t\to\infty}  \left[\phi_{k_{\max}}(t) - \phi_{k_{\min}}(t) \right]= 0, 
\qquad \mbox{therefore}\quad 
\lim_{t\to\infty} \phi_i(t) = \hat\phi=\mbox{constant} \quad \forall i\in\mathbb{Z}.
\]
This proves~\eqref{hatc}, where $\hat c$ satisfies $P(-\hat c )=\hat\phi$. 
\end{proof}

\textbf{Numerical simulations.}
We consider an initial condition $\{z_i(0),\rho_i(0)\}$ which satisfies
\begin{equation}\label{SimIC}
\rho_i(0) = \begin{cases} 0.2, & z_i(0) \le -0.3,\\
0.5-0.3*\sin(5\pi z_i(0)), \qquad& -0.3 < z_i(0)< 0.3,\\
0.8, & z_i(0)\ge 0.3.
\end{cases}
\end{equation}
see the left plots in Figure~\ref{fig:Pins}.  
Typical solutions of the FtLs model $(z_i(t), \rho_i(t))$ at various time $t$
are given in the same figure, for twos different weight function $w(\cdot)$.
We observe that if $w'<0$, the oscillations damp out quickly as $t$ grows, 
and the solution approaches some profile 
$P(\cdot)$. 
On the other hand, when $w'<0$, the solution becomes more oscillatory  as $t$ grows, 
indicating the instability of the profiles.

\begin{figure}[htbp]
\begin{center}
\includegraphics[width=14cm]{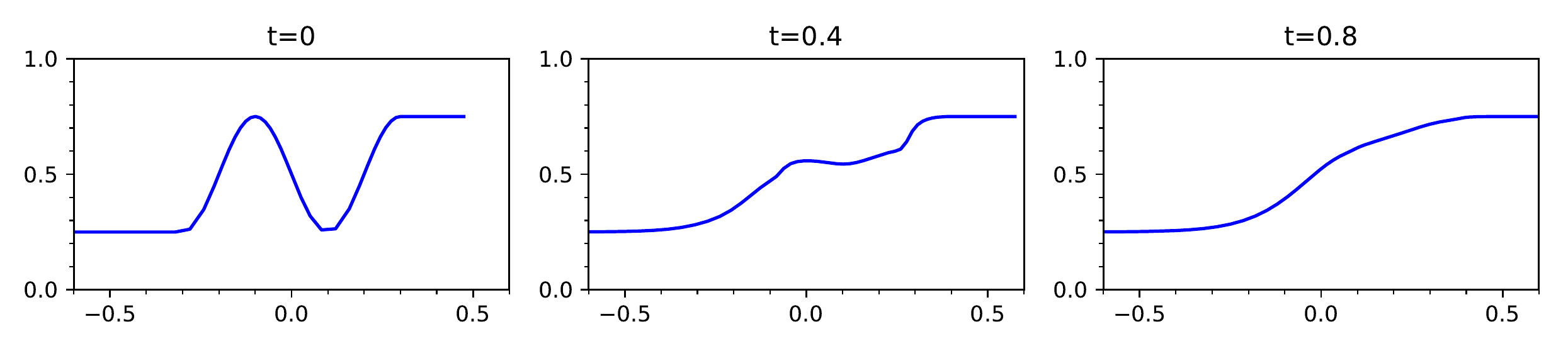}\\
\includegraphics[width=14cm]{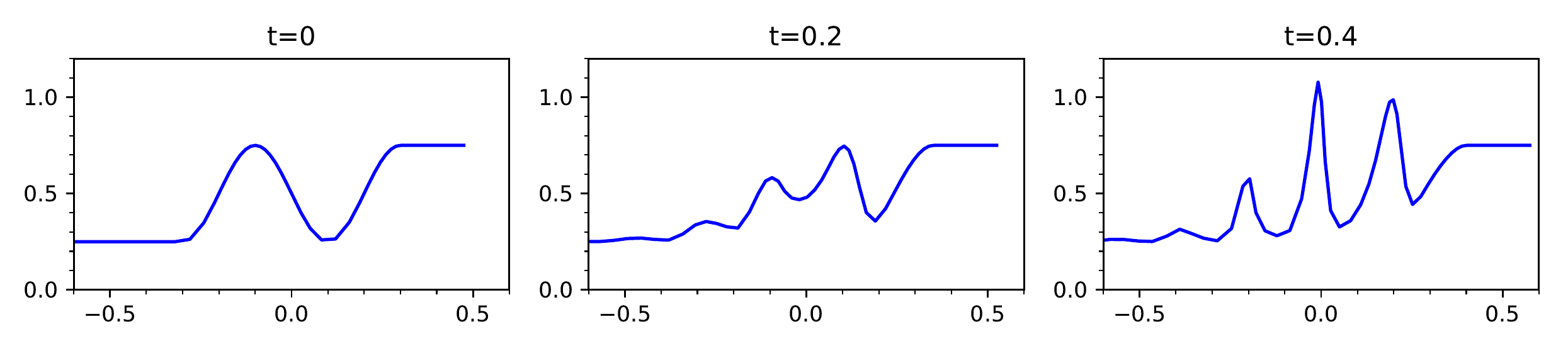}
\caption{Typical solutions of the FtLs model $(z_i(t),\rho_i(t))$ at various time $t$, with oscillatory initial condition.
Above: 
$w(x)=\frac{2}{h}-\frac{2x}{h^2}$ and the solution approaches some profile as $t$ grows.
Below: $w(x) = \frac{2x}{h^2}$ and the solution oscillates more as $t$ grows.}
\label{fig:Pins}
\end{center}
\end{figure}

\begin{remark}
If the initial condition satisfies the assumptions in Theorem~\ref{th3b}, 
then the solution approaches a stationary profile as $t\to\infty$, 
by Theorem~\ref{th3b}. 
The above numerical simulation suggests possible improvements for Theorem~\ref{th3b}.
Indeed, note that for the profile $P(\cdot)$ with $\lim_{x\to-\infty} =0.2$ and  
$\lim_{x\to\infty} =0.8$, 
one has that $0.2 < P(x) < 0.8$ for every bounded $x$. 
We remark that  the initial condition~\eqref{SimIC}  
does not satisfy the assumptions in Theorem~\ref{th3b},
since $\rho(x)=0.8$ for $x\ge 0.3$. 
Nevertheless, we observe stability in the simulation.
This indicates that the basin of attraction is probably larger than 
the assumptions in Theorem~\ref{th3b}.
\end{remark}

\section{The non-local conservation law}
\setcounter{equation}{0}
\label{sec:NLCL}

In this section 
we consider the stationary traveling wave profile $Q(\cdot)$ for~\eqref{eq:claw1}.
We denote by $\mathcal{A}$ the continuous averaging operator
\begin{equation}
\mathcal{A}(Q;x) 
\;\dot=\; \int_x^{x+h} Q(y) \,w(y-x) \; dy
=\int_0^h Q(x+s) w(s) \; ds, \label{eq:AQ}
\end{equation}
and with a slight abuse of notation,  we denote the operator also for a function 
of two variables, 
\begin{equation}
\mathcal{A}(\rho;t,x) 
\;\dot=\;
\int_x^{x+h} \rho(t,y) w(y-x) \; dy = \int_0^h \rho(t,x+s) w(s)\; ds.
\label{eq:Q*}
\end{equation}

A stationary profile for~\eqref{eq:claw1} satisfies
\begin{equation}\label{eq:p1}
Q(x) \cdot v(\mathcal{A}(Q;x) ) 
\equiv \bar f = \mbox{constant}.
\end{equation}
In the case where 
\[\lim_{x\to-\infty} Q(x) = \rho^-, \quad\mbox{and/or}  \lim_{x\to+\infty} Q(x) = \rho^+,\]
the constant $\bar f$ satisfies (respectively)
\[
\bar f = \lim_{x\to-\infty} Q(x) \cdot v(\mathcal{A}(Q;x) )
= f(\rho^-) ,
\quad \mbox{and/or} \quad 
\bar f = \lim_{x\to+\infty} Q(x) \cdot v(\mathcal{A}(Q;x) ) = f(\rho^+).
\]

We seek continuous solutions $Q(\cdot)$ for~\eqref{eq:p1}. 
Differentiating~\eqref{eq:p1} in $x$, we can
rewrite it equivalently  as a differential equation:
\begin{equation}\label{eq:p2}
Q'(x) = - \frac{Q(x)v'(\mathcal{A}(Q;x) ) }{v(\mathcal{A}(Q;x) ) }
\int_0^h Q'(x+s) w(s) \, ds.
\end{equation}
Note that~\eqref{eq:p2} is a \textbf{delay integro-differential equation}.
We remark that, for smooth solutions of $Q(\cdot)$, 
\eqref{eq:p1} and~\eqref{eq:p2} are equivalent. 
In the sequel we consider the initial value problems in Section~3.2, 
where the solution for $Q(\cdot)$ is only continuous, 
and the 
derivative $Q'(x)$  in~\eqref{eq:p2}  is assumed to be the left derivative $Q'(x-)$.

\subsection{Technical lemma}

The existence of solutions for~\eqref{eq:p2} will be established in 
Section~\ref{sec:3.2} for initial value problems, 
and in Section~\ref{sec:3.3} for asymptotic value problems. 
Assuming that the solutions exist, we establish some technical lemma.

\begin{lemma}[Asymptotic limits]\label{lm:AL}
Assume that $Q(\cdot)$ is a solution of~\eqref{eq:p2} which satisfies
\begin{equation}\label{eq:Lim}
\lim_{x\to-\infty} Q(x) = \rho^-,\qquad 
\lim_{x\to +\infty} Q(x) = \rho^+,\qquad \lim_{x\to\pm\infty} Q'(x)=0.
\end{equation}
Then, the following holds.
\begin{itemize}
\item[i)] As $x\to+\infty$, $Q(x)$ approaches $\rho^+$ with an exponential rate 
if and only if  $\rho^+ > \hat\rho$, where $\hat\rho$  satisfies~\eqref{eq:stag}. The rate $\lambda_+$ satisfies the estimate
\begin{equation}\label{estL+}
\lambda_+ >  \frac{1}{h} 
\ln\left(-\frac{\rho^+ v'(\rho^+)}{v(\rho^+)}\right).
\end{equation}
\item[ii)] As $x\to-\infty$, $Q(x)$  approaches $\rho^-$ with an exponential rate 
if and only if  $\rho^- < \hat\rho$, where $\hat\rho$  satisfies~\eqref{eq:stag}. 
The rate $\lambda^-$ satisfies the estimate
\begin{equation}\label{estL-}
\lambda_->  \frac{1}{h} 
\ln\left(-\frac{\rho^- v'(\rho^-)}{v(\rho^-)}\right).
\end{equation}
\end{itemize}
\end{lemma}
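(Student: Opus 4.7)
The plan is to follow exactly the strategy used for Lemma \ref{lm:A}, replacing the discrete sum-over-cars averaging by the integral averaging operator $\mathcal{A}$. Specifically, I will linearize the delay integro-differential equation \eqref{eq:p2} around the constant state $\rho^+$, derive a characteristic equation for exponential solutions, and read off both the condition for existence of a positive decay rate and an explicit lower bound.

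Write $Q(x) = \rho^+ + \eta(x)$, where $\eta$ is a first order perturbation, and linearize each ingredient of \eqref{eq:p2}. Since $\int_0^h w(s)\,ds = 1$, one has $\mathcal{A}(Q;x) \approx \rho^+ + \int_0^h \eta(x+s)w(s)\,ds$, so to first order
\[
\frac{Q(x)\,v'(\mathcal{A}(Q;x))}{v(\mathcal{A}(Q;x))} \;\approx\; \frac{\rho^+ v'(\rho^+)}{v(\rho^+)} \;=\; -b,
\]
with $b$ as in the statement. Plugging into \eqref{eq:p2} gives the linear equation
\[
\eta'(x) \;=\; b\int_0^h \eta'(x+s)\,w(s)\,ds.
\]
Seeking $\eta(x) = M e^{-\lambda x}$ and dividing by $-\lambda M e^{-\lambda x}$ yields the characteristic equation
\[
\mathcal{L}(\lambda) \;\dot=\; b\int_0^h e^{-\lambda s} w(s)\,ds \;=\; 1.
\]
The function $\mathcal{L}$ is continuous and strictly decreasing in $\lambda$, with $\mathcal{L}(0)=b$, $\mathcal{L}(\lambda)\to 0$ as $\lambda\to+\infty$ and $\mathcal{L}(\lambda)\to+\infty$ as $\lambda\to-\infty$, so there is a unique real root $\lambda_+$. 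This root is positive iff $b>1$, which by the same manipulation used in the proof of Lemma \ref{lm:A} is equivalent to $f'(\rho^+)<0$, i.e.\ $\rho^+>\hat\rho$; the marginal case $b=1$ gives $\lambda_+=0$ (the constant profile), and $b<1$ gives a negative root, i.e.\ an unstable asymptote. This establishes the ``iff'' part.

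For the quantitative bound \eqref{estL+}, observe that for $\lambda>0$ the integrand is decreasing in $s\in[0,h]$, so $e^{-\lambda s}\ge e^{-\lambda h}$ on the support of $w$, hence
\[
1 \;=\; b\int_0^h e^{-\lambda_+ s} w(s)\,ds \;\ge\; b\,e^{-\lambda_+ h}\int_0^h w(s)\,ds \;=\; b\,e^{-\lambda_+ h},
\]
with strict inequality since $w$ is continuous and not concentrated at $s=h$. Solving gives $\lambda_+>\tfrac{1}{h}\ln b$, which is precisely \eqref{estL+}. The argument at $x\to-\infty$ is symmetric: linearize around $\rho^-$ to obtain the same characteristic equation with $b$ replaced by $b'=-\rho^- v'(\rho^-)/v(\rho^-)$, and repeat. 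The only subtle point, and the one I would double-check most carefully, is the strictness in the last estimate (ruling out equality in the integral bound); apart from that, every step is the continuum analog of a discrete computation already executed in Lemma \ref{lm:A}, so no genuinely new obstacle arises.
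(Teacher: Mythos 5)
Your proposal is correct and follows essentially the same route as the paper: linearize around the constant state, derive the characteristic equation $b\int_0^h e^{-\lambda s}w(s)\,ds=1$, use monotonicity of the left-hand side to get a unique real root whose sign is determined by $b\gtrless 1$ (equivalently $\rho^+\gtrless\hat\rho$), and bound the integral below by $e^{-\lambda h}$ to obtain \eqref{estL+}. The only cosmetic difference is that you linearize the differentiated form \eqref{eq:p2} while the paper linearizes the integral form \eqref{eq:p1}; these yield the same characteristic equation (your version just carries an extra spurious root $\lambda=0$ from dividing out $\eta'$, which you correctly set aside), and the strictness of the final inequality is justified exactly as in the paper, since the continuous weight $w$ cannot concentrate at $s=h$.
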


\begin{proof} 
We consider the limit $x\to+\infty$, and assume that $Q(x)\to \rho^+$ in the limit.
We linearize~\eqref{eq:p1} at $\rho^+$ and write
\[
Q(x) = \rho^+ + \eps(x),
\]
where $\eps(x)$ is a small perturbation. 
Keeping only the first order terms of $\eps$ and
using the notation ``$\approx$'', we compute, 
\begin{eqnarray*}
\mathcal{A}(Q;x) &=& \int_x^{x+h} (\rho^+ + \eps(y)) w(y-x) \; dy 
= \rho^+ + \int_x^{x+h} \eps(y) w(y-x)\; dy,
\\
v(\mathcal{A}(Q;x)) &\approx & v(\rho^+) + v'(\rho^+)\cdot \int_x^{x+h} \eps(y) w(y-x)\; dy.
\end{eqnarray*}
Putting these in~\eqref{eq:p1}, one gets
\[
(\rho^+ + \eps(x)) \cdot \left[ v(\rho^+) + v'(\rho^+)\cdot \int_x^{x+h} \eps(y) w(y-x)\; dy
\right] \approx \bar f= \rho^+ v(\rho^+).
\]

We obtain the following linear delay integro-differential equation for
for the perturbation $\eps(x)$:
\begin{equation}\label{eq:eps}
-\beta \int_x^{x+h} \eps(y) w(y-x)\; dy +\eps(x) =0, \qquad 
\mbox{where}\quad 
\beta \,\dot=\,  -\frac{\rho^+ v'(\rho^+)}{v(\rho^+)}. 
\end{equation}
Note that we have $\beta=b$, where $b$ is defined in~\eqref{lambda+e}.
We can solve~\eqref{eq:eps} using the characteristic equation.
We seek solutions of the form
\[
\eps(x) =  M e^{-\lambda x},
\]
where $M$ is an arbitrary constant (positive or negative),
and $\lambda\in\mathbb{R}$ is the exponential rate.
Plugging this into~\eqref{eq:eps}, we get
\begin{equation}\label{eq:lambda}
G(\lambda) \;\dot=\; \int_0^h e^{-\lambda s} w(s) \; ds - \frac{1}{\beta}=0.
\end{equation}
The function $G(\cdot)$ has the properties
\[
G(0) =1-\frac{1}{\beta}, \quad
\lim_{\lambda\to+\infty} G(\lambda)= -\frac{1}{\beta} <0, 
\qquad 
G'(\lambda) = -\int_0^h s e^{-\lambda s} w(s) \; ds <0 \quad \forall \lambda.
\]
Thus, $G(\lambda)=0$ has a unique positive solution if and only if 
$G(0) >0$, i.e., 
\[
\frac{1}{\beta} <1 
\qquad \Longleftrightarrow \qquad 
-\frac{v(\rho^+)}{\rho^+v'(\rho^+)} < 1
\qquad  \Longleftrightarrow \qquad 
\rho^+ > \hat \rho. 
\]
We denote this solution by $\lambda_+$.
For any given $\rho^+$, $h$, and $w(\cdot)$, an estimate for $\lambda_+$ can be obtained by observing
\[
 e^{-\lambda_+ h} = \int_0^h e^{-\lambda_+ h} w(s) \, ds < \frac{1}{\beta} ,
\]
which implies~\eqref{estL+}. 
A completely symmetric argument leads to the result in \textit{ii)} for the limit  $x\to-\infty$.
\end{proof}

\begin{remark}\label{rk3.1}
We observe two trivial cases. 
\begin{itemize}
\item[(1)] If $\rho^+ =\hat\rho$, then $\beta=1$, and we have $\lambda_+=0$. 
Similarly, if $\rho^-=\hat\rho$ then $\lambda_-=0$.  
Thus , if $\rho^-=\rho^+=\hat\rho$, 
the only profile is the constant function $Q(x)\equiv \hat\rho$.
\item[(2)] On the other hand, 
as $\rho^+ \to 1$, we have that $\beta^{-1}\to 0$, therefore $\lambda_+ \to \infty$. 
Similarly, as $\rho^-\to 0$, then $\lambda^- \to \infty$ as well. 
Thus, the only stationary traveling wave profile 
connecting $\rho^-=0,\rho^+=1$ is the unit step function,
taking the step at an arbitrary point.
\end{itemize}
\end{remark}

In the sequel  we consider the nontrivial cases where
\begin{equation}\label{eq:mm}
0<Q(x)<1 \quad \forall x\in\mathbb{R},\qquad 0<\rho^- < \hat\rho <\rho^+ <1.
\end{equation}

\subsection{Initial value problems}\label{sec:3.2}
Assume that $w(\cdot)$ satisfies~\eqref{eq:wc1} and $v(\cdot)$ satisfies~\eqref{eq:vc}. 
Fixing a point $x_0\in\mathbb{R}$, 
we consider the initial value problem of~\eqref{eq:p1},
with initial condition 
\begin{equation}\label{Qint}
Q(x)=\Phi(x)\qquad \mbox{for} \quad x\ge x_0,
\end{equation}
where $\Phi(\cdot)$ satisfies
\begin{equation}\label{eq:init}
0 < \Phi(x) <1,\qquad \Phi'(x) > 0, \qquad \mbox{for}~ x\ge x_0.
\end{equation}
We seek continuous solutions  $Q(\cdot)$ for~\eqref{eq:p1}, 
solved backward in $x$ on the interval $x\le x_0$. 
In this setting, the term $Q'(x)$ on the lefthand side of~\eqref{eq:p2} 
is assumed to be the left derivative $Q'(x-)$.

\begin{lemma}\label{lm1}
Assume that $Q(\cdot)$ is a solution of the  initial value problem of~\eqref{eq:p1}
with initial condition~\eqref{Qint}, satisfying~\eqref{eq:init}.
Then for all $x<x_0$, we have
\begin{itemize}
\item[(i)] 
$Q$ is positive and monotone
\begin{equation}\label{eq:LL1}
Q'(x) > 0, \qquad 0 < Q(x) < \Psi(x_0) <1.
\end{equation}

\item[(ii)]
We have
\begin{equation}\label{eq:LL2}
Q(x) v(\mathcal{A}(Q;x))=\bar f_0, \qquad \mbox{where}
\quad \bar f_0 \;\dot=\; \Phi(x_0) \cdot v\left( \mathcal{A}(\Phi;x_0) \right).
\end{equation}

\item[(iii)] 
We have the asymptotic limit
\begin{equation}\label{eq:LL3}
\lim_{x\to-\infty} Q(x) = \rho^-_0, \qquad \mbox{where} \quad
0<\rho^-_0<\hat\rho, \quad f(\rho^-_0)=\bar f_0.
\end{equation}
\end{itemize}
\end{lemma}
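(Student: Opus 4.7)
The plan is to establish part (ii) first, since the conservation identity $Q(x)\,v(\mathcal{A}(Q;x))=\bar f_0$ is the single structural fact from which (i) and (iii) follow with only small additional work. The heart of the argument is that equation~\eqref{eq:p2} was derived precisely by differentiating~\eqref{eq:p1} in $x$, so along any continuous solution of~\eqref{eq:p2} the quantity $x\mapsto Q(x)\,v(\mathcal{A}(Q;x))$ is locally constant on $(-\infty,x_0)$. Matching at $x_0$ with the initial data $\Phi$ (using that $Q$ is continuous through $x_0$) pins its value to $\bar f_0=\Phi(x_0)\,v(\mathcal{A}(\Phi;x_0))$, which proves~\eqref{eq:LL2}. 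A small technicality is that $Q'$ may have a jump at $x_0$, but this does not affect the identity~\eqref{eq:p1} because it involves no derivatives.

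For part (i), positivity is immediate from (ii): since $\Phi(x_0)\in(0,1)$ and $v>0$ on $[0,1]$ we have $\bar f_0>0$, so $Q(x)=0$ would contradict~\eqref{eq:LL2}. For monotonicity I would repeat the contradiction argument of Lemma~\ref{lm:mono}: if $Q'$ fails to be strictly positive on $(-\infty,x_0)$, let $\hat x<x_0$ be the largest point with $Q'(\hat x)=0$ while $Q'(x)>0$ on $(\hat x,x_0)$ (possible since $\Phi'>0$ forces $Q'(x_0-)>0$ via~\eqref{eq:p2}). Then $\int_0^h Q'(\hat x+s)\,w(s)\,ds>0$, and since $v'<0$ and $v>0$, equation~\eqref{eq:p2} gives $Q'(\hat x)>0$, a contradiction. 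The strict upper bound $Q(x)<\Phi(x_0)$ on $x<x_0$ then follows from strict monotonicity and continuity at $x_0$.

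For part (iii), the monotonicity and boundedness from (i) immediately yield the existence of $\rho_0^-\doteq\lim_{x\to-\infty}Q(x)\in[0,\Phi(x_0))$. Because $w$ is supported in $[0,h]$, the uniform convergence of $Q(\cdot+s)$ on $s\in[0,h]$ as $x\to-\infty$ forces $\mathcal{A}(Q;x)\to\rho_0^-$, so passing to the limit in~\eqref{eq:LL2} gives $\rho_0^-\,v(\rho_0^-)=\bar f_0$, i.e., $f(\rho_0^-)=\bar f_0$. Positivity of $\bar f_0$ rules out $\rho_0^-=0$. To obtain $\rho_0^-<\hat\rho$, observe that strict monotonicity of $Q$ together with the averaging structure gives $Q(x)<\mathcal{A}(Q;x)$ for every $x<x_0$, hence, using $v$ strictly decreasing,
\[
f(Q(x))~=~Q(x)\,v(Q(x))~>~Q(x)\,v(\mathcal{A}(Q;x))~=~\bar f_0~=~f(\rho_0^-)\qquad\forall x<x_0.
\]
If $\rho_0^-\ge\hat\rho$ held, then $Q(x)>\rho_0^-\ge\hat\rho$ for every finite $x<x_0$, and since $f$ is strictly decreasing on $[\hat\rho,1]$ we would have $f(Q(x))<f(\rho_0^-)$, contradicting the displayed inequality.

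The main obstacle is the strict inequality $\rho_0^-<\hat\rho$ in~(iii): mere monotone convergence and the conservation identity only produce $f(\rho_0^-)=\bar f_0$, which generically admits two roots, and one must argue on which side of $\hat\rho$ the limit lies. The argument above extracts this from the strict gap $f(Q(x))>\bar f_0$, which in turn requires the monotonicity from~(i); this is why (ii) and (i) must be proved before attacking~(iii). The remaining steps are routine mirror images of the FtLs arguments in Section~\ref{sec:NLF}.
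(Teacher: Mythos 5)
Your proposal is correct, and for parts (i) and (ii) it follows essentially the paper's route: (ii) is immediate from the integral form \eqref{eq:p1} (the paper states it in one line; your extra remark about the conserved quantity and the possible kink of $Q'$ at $x_0$ is harmless), and the monotonicity contradiction at the rightmost point where $Q'$ vanishes is exactly the argument of Lemma~\ref{lm1}(i) and Lemma~\ref{lm:mono}. Your derivation of positivity directly from \eqref{eq:LL2} (namely $Q(x)=\bar f_0/v(\mathcal{A}(Q;x))\ge \bar f_0>0$) is in fact cleaner and more quantitative than the paper's appeal to ``$Q=0$ is a critical value of an autonomous equation.''

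The genuine divergence is in part (iii). The paper obtains $\rho_0^-<\hat\rho$ by citing Lemma~\ref{lm:AL}, whose proof is a first-order linearization characterizing when the approach to the asymptote is exponential; invoking it here tacitly assumes the solution sits in that perturbative regime. You instead argue directly: strict monotonicity gives $Q(x)<\mathcal{A}(Q;x)$, hence $f(Q(x))>Q(x)\,v(\mathcal{A}(Q;x))=\bar f_0=f(\rho_0^-)$, while $\rho_0^-\ge\hat\rho$ would force $Q(x)>\hat\rho$ and, by \eqref{eq:stag}, $f(Q(x))<f(\rho_0^-)$ --- a contradiction. This is self-contained, rigorous without any linearization, and is really the same mechanism the paper itself exploits later (compare the inequality \eqref{eq:FEE} in the proof of Theorem~\ref{th2}). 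What the paper's route buys is brevity, since Lemma~\ref{lm:AL} is already on record and is needed elsewhere anyway; what your route buys is independence from that lemma and a strict, non-asymptotic inequality $f(Q(x))>\bar f_0$ valid for every finite $x$. Both are sound; no gaps.
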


\begin{proof}
(i) 
We first observe that $Q'(x_0-) > 0$. 
Indeed, this follows immediately from the assumptions~\eqref{eq:init} on $\Phi(\cdot)$ and 
the equation~\eqref{eq:p2}. 

We now assume that $Q(\cdot)$ is not monotone increasing on $x<x_0$.
Let $\hat y < x_0 $ be the local minimum such that $Q'(\hat y)=0$ and 
$Q'(x) > 0$ for all $x\ge \hat y$. 
By~\eqref{eq:p2}, this implies $Q'(\hat y) >0$, a contradiction. 
Thus, we conclude that $Q'(x) >0$ for all $x<x_0$.
 
The positivity of the solution  follows from the fact that~\eqref{eq:p1}
is autonomous and $Q=0$ is a critical value.  

(ii) By~\eqref{eq:p1}, we immediately have~\eqref{eq:LL2}.

(iii)  By Lemma~\ref{lm:AL}, the asymptotic value satisfies $\rho^-_0<\hat\rho$.
Taking the limit $x\to-\infty$ in~\eqref{eq:LL2} we obtain~\eqref{eq:LL3}. 
\end{proof}

\begin{theorem}\label{th1}
Assume that $w(\cdot)$ satisfies~\eqref{eq:wc1} and $v(\cdot)$ satisfies~\eqref{eq:vc}. 
Consider the initial value problem of~\eqref{eq:p1} with initial condition~\eqref{Qint},
satisfying~\eqref{eq:init}.
Then there exists a unique solution $Q$ on the interval $x\le x_0$.
The solution $Q$ is monotone increasing and 
Lipschitz continuous, with Lipschitz constant $\bar f_0  L_v \kappa$, 
where $L_v$ is the Lipschitz constant for the map $(1/v)$
and $\kappa=\| w(\cdot)\|_\infty$. 
\end{theorem}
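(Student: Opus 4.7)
The key observation is that~\eqref{eq:p1} can be written in the explicit fixed-point form
\[
Q(x) \;=\; \frac{\bar f_0}{v\bigl(\mathcal{A}(Q;x)\bigr)}, \qquad
\bar f_0 \;\dot=\; \Phi(x_0)\,v\bigl(\mathcal{A}(\Phi;x_0)\bigr),
\]
and that the right-hand side depends on $Q$ only through its values on the \emph{forward} interval $[x,x+h]$. This makes the problem naturally suited to solving backward from $x_0$ by the method of steps.

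\textbf{Step 1 (local existence and uniqueness).} I would fix a step size $\delta>0$, to be chosen shortly independently of $x_0$, and consider the closed set
$\mathcal{K}_\delta \;\dot=\; \{Q\in C([x_0-\delta,x_0]) : 0 \le Q \le \Phi(x_0)\}$. For $Q\in\mathcal{K}_\delta$, let $\tilde Q$ agree with $Q$ on $[x_0-\delta,x_0]$ and with $\Phi$ on $[x_0,x_0+h]$, and define
\[
(\mathcal{T}Q)(x) \;\dot=\; \frac{\bar f_0}{v(\mathcal{A}(\tilde Q;x))}, \qquad x\in[x_0-\delta,x_0].
\]
A routine check shows $\mathcal{T}(\mathcal{K}_\delta)\subset \mathcal{K}_\delta$ with the averages $\mathcal{A}(\tilde Q;\cdot)$ kept bounded away from the singularity $v=0$. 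The contraction estimate follows from the fact that two extensions $\tilde Q_1,\tilde Q_2$ coincide on $[x_0,x_0+h]$, so only a window of length at most $\delta$ contributes to the averaging:
\[
|\mathcal{A}(\tilde Q_1;x)-\mathcal{A}(\tilde Q_2;x)|
~\le~ \kappa\,\delta\,\norm{Q_1-Q_2}_\infty,
\]
which, combined with the Lipschitz constant $L_v$ of $(1/v)$, gives
$\norm{\mathcal{T}Q_1-\mathcal{T}Q_2}_\infty \le \bar f_0 L_v \kappa\delta \norm{Q_1-Q_2}_\infty$. Choosing $\delta<(\bar f_0 L_v \kappa)^{-1}$, Banach's fixed point theorem produces a unique continuous $Q$ on $[x_0-\delta,x_0]$.

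\textbf{Step 2 (global extension and Lipschitz bound).} Since $\delta$ depends only on the global constants $\bar f_0$, $L_v$, $\kappa$, iterating the contraction on successive intervals $[x_0-(k+1)\delta,x_0-k\delta]$ yields a unique continuous solution $Q$ on all of $(-\infty,x_0]$. Monotonicity $Q'>0$ and the range $0<Q<\Phi(x_0)<1$ are inherited from Lemma~\ref{lm1}. Using this monotonicity together with $0 \le Q \le 1$, for $x_1<x_2\le x_0$
\[
|\mathcal{A}(Q;x_2)-\mathcal{A}(Q;x_1)|
~\le~ \kappa\int_0^h\!\bigl[Q(x_2+s)-Q(x_1+s)\bigr]\,ds
~=~ \kappa\!\left[\int_{x_1+h}^{x_2+h}\!\!Q\,-\,\int_{x_1}^{x_2}\!\!Q\right]
~\le~ \kappa(x_2-x_1),
\]
and applying $|1/v(\rho_1)-1/v(\rho_2)|\le L_v|\rho_1-\rho_2|$ to the identity $Q(x)=\bar f_0/v(\mathcal{A}(Q;x))$ yields $|Q(x_1)-Q(x_2)| \le \bar f_0 L_v \kappa |x_1-x_2|$, which is exactly the claimed Lipschitz constant.

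\textbf{Main obstacle.} The most delicate point is verifying that $\mathcal{T}$ preserves $\mathcal{K}_\delta$ and that $\mathcal{A}(\tilde Q;\cdot)$ remains bounded away from $1$ throughout the iteration, so that the Lipschitz constant $L_v$ of $(1/v)$ may be used with a uniform value at every step. Once this uniform non-degeneracy is secured, the contraction estimate of Step 1 is uniform in $x_0$ and the global construction of Step 2 goes through essentially mechanically.
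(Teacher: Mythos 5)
Your proof is correct, and it shares the paper's central idea --- rewriting \eqref{eq:p1} as the fixed-point identity $Q(x)=\bar f_0/v(\mathcal{A}(Q;x))$ for the Picard map --- but it closes the contraction argument in a genuinely different way. The paper remarks that the method of steps fails because the delay in \eqref{eq:p2} can be arbitrarily small, and instead sets up a \emph{single global} contraction on all of $(-\infty,x_0]$, working in a class $\mathcal{U}$ of monotone Lipschitz functions with prescribed limit $\rho_0^-$ and using the exponentially weighted norm $\|u\|_\gamma=\sup_{x\le x_0}e^{\gamma x}|u(x)|$ with $\gamma=2\bar f_0L_v\kappa$ to beat the factor $\bar f_0 L_v\kappa$ coming from the averaging. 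You instead rescue the stepping idea: even though the delay can vanish, on a backward window of length $\delta$ the unknown portion of the averaging interval $[x,x+h]$ has measure at most $\delta$, so the kernel contributes only $\kappa\delta$ and the plain sup norm already gives a contraction once $\delta<(\bar f_0L_v\kappa)^{-1}$; since $\delta$ depends only on global constants, the construction iterates over all of $(-\infty,x_0]$. Your version is more elementary (no weighted norm, no need to build the asymptotic limit $\rho_0^-$ into the function class), at the cost of having to re-verify the invariant set and the non-degeneracy $\mathcal{A}(\tilde Q;x)\le \mathcal{A}(\Phi;x_0)\le\Phi(x_0+h)<1$ at every step --- the point you correctly flag as the main obstacle, and which does go through since the averaging window never sees $\Phi$ beyond $x_0+h$. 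The paper's weighted-norm formulation buys the asymptotic value $\lim_{x\to-\infty}Q=\rho_0^-$ for free as part of the fixed-point class, whereas you (appropriately) delegate monotonicity, positivity, and the range $0<Q<\Phi(x_0)$ to Lemma~\ref{lm1}, which is also what makes your local uniqueness-in-$\mathcal{K}_\delta$ into genuine uniqueness. Your derivation of the Lipschitz constant $\bar f_0L_v\kappa$ matches the paper's.
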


\begin{proof} 
\textbf{Existence.} For delay differential equations with strictly positive delays, the 
existence and uniqueness of solutions 
can be proved by the standard method of steps, cf~\cite{MR0141863, MR0477368}. 
Unfortunately, for~\eqref{eq:p2} the delay is arbitrarily small, and
the method of steps does not apply.
Instead, we apply a fixed point argument. 

Let $\Phi$ be the initial condition on $x\ge x_0$, and let
$\bar f_0$ and $\rho^-_0$ be defined as in~\eqref{eq:LL2}-\eqref{eq:LL3} in 
Lemma~\ref{lm1}.  
Let $L_v$ be the Lipschitz constant for the map $(1/v)$, and consider the constants
\begin{equation}\label{CCC}
\kappa \;\dot=\; \| w(\cdot)\|_\infty,\qquad 
L_Q \;\dot=\; \bar f_0 L_v \kappa, \qquad \gamma\;\dot=\; 2 \bar f_0 L_v \kappa.
\end{equation}
We consider the set $\mathcal{U}$ of functions, defined on $x\le x_0$, as
\begin{eqnarray}
\mathcal{U} &\dot=& \left\{  u : (-\infty,x_0]\mapsto (\rho^-_0, \Phi(x_0)]~;~
u ~\mbox{is Lipschitz with Lipschitz constant} ~L_Q ,
\right. 
\nonumber \\
&& \qquad \left. u(x_0) = \Phi(x_0), ~\lim_{x\to-\infty}u(x)=\rho^-_0, ~u'(x) >0 ~\forall x\le x_0.
\right\}
\label{def:U}
\end{eqnarray}

Let $u\in\mathcal{U}$. 
We define a Picard operator on $\mathcal{U}$ as
\bel{Picard}
(\P u)(x)~\doteq~  \frac{\bar f_0}{v(\mathcal{A}(u;x))}, 
\qquad\forall x\geq x_0.
\eeq
Note that a fixed point for $\P $ is a solution for~\eqref{eq:p1}.

We first claim that the Picard operator $\P$ maps $\mathcal{U}$ into itself, i.e.
\begin{equation}\label{P-inv}
(\P u) \in \mathcal{U} \quad \mbox{if} ~ u \in \mathcal{U} .
\end{equation}
Indeed, from the construction we have 
\[(\P u) (x_0) = \frac{\bar f_0}{ v(\mathcal{A}(\Phi;x_0))} = \Phi(x_0).\]
Moreover, since $v$ is decreasing, we conclude that $(1/v)$ is increasing. 
Then, since $u\in\mathcal{U}$ is monotone increasing, 
so is the averaged function $\mathcal{A}(u;x)$. 
Therefore $(\P u)$ is also monotone increasing. 
Furthermore, for the asymptotic value, we have
\[
\lim_{x\to-\infty} (\P u)(x) = \lim_{x\to-\infty} \frac{\bar f_0}{v(\mathcal{A}(u;x))}
= \frac{\bar f_0}{v(\rho^-_0)}  = \rho^-_0.
\]
Finally, since $u$ is Lipschitz, so is $(\P u)$. To obtain the Lipschitz constant, we compute
\[
\mathcal{A}(u;x)_x = \int_0^h u'(x+s) w(s) \; ds 
\le \kappa \int_0^h u'(x+s) \; ds = \kappa \left[u(x+h)-u(x) \right] \le \kappa,
\]
therefore
\[
(\P u)'(x) = \bar f_0 \cdot \left(\frac{1}{v(\mathcal{A}(u;x))}\right)_x \le \bar f_0 L_v \mathcal{A}(u;x)_x\le \bar f_0 L_v \kappa = L_Q.
\]
We conclude that $(\P u) \in\mathcal{U}$, proving the claim~\eqref{P-inv}.

\medskip

We further claim that the Picard operator $\P$ is a strict contraction w.r.t.~the norm
\bel{norm} 
\|u\|_\gamma~\doteq~\sup_{x\leq x_0} e^{\gamma x} |u(x)|,\eeq
where $\gamma$ is defined in~\eqref{CCC}.

Indeed, let $u_1, u_2\in \mathcal{U}$. Assume
\[
\|u_1-u_2\|_\gamma~=~\delta,
\qquad \mbox{i.e.}\quad 
|u_1(x) - u_2(x)|~\leq~\delta\,e^{-\gamma x}\quad\forall x\leq x_0\,.
\]
Then, for any $x\leq x_0$ we have 
\begin{eqnarray*}
&& \hspace{-15mm}  \Big|(\P u_1)(x)-(\P u_2)(x)\Big|
~\leq~
\bar f_0 L_v\cdot \int_x^{x+h}
| u_1(y)-u_2(y)|\, w(y-x)\, dy
\\
&\leq& \bar f_0 L_v\kappa \cdot \int_x^{x+h}\delta\,e^{-\gamma y}\, dy
~\leq~\bar f_0 L_v\kappa \delta {1\over \gamma} \Big( e^{-\gamma x} - e^{-\gamma(x+h)}\Big)~
<~{\delta\over 2} e^{-\gamma x}.
\end{eqnarray*}
Hence
\[
\Big\| (\P u_1)-(\P u_2)\Big\|_\gamma~\leq~\sup_{x\leq x_0} e^{\gamma x}
\Big|(\P u_1)(x)-(\P u_2)(x)\Big|~\leq~{\delta\over 2}~=~{1\over 2} \|u_1-u_2\|_\gamma\,.
\]
This shows that the Picard operator is a strict contraction from $\mathcal{U}$ to itself,
hence it has a unique fixed point in $\mathcal{U}$. 
The fixed point iterations converge pointwise on bounded sets. 
Furthermore, since all functions in $\mathcal{U}$ 
have a fixed asymptotic limit as $x\to-\infty$, 
we conclude the pointwise convergence for all $x\le x_0$.
This establishes the existence of solutions for the initial value problem.

\bigskip

\noindent\textbf{Uniqueness.} 
The uniqueness of solutions can be proved by contradiction. 
 Let $Q(\cdot)$ and $\widehat Q(\cdot)$ be two distinct 
solutions of the initial value problem, 
with the same initial condition~\eqref{Qint}. 
Without loss of generality, we assume that for some $\bar x\le x_0$ we have 
$Q(x) = \widehat Q(x)$ on $x\ge \bar x$ and 
$Q(x) > \widehat Q(x)$ on 
some non-empty interval $[\bar x -c, \bar x]$ where $c\in\mathbb{R}^+$. 
Since the solutions are monotone, there exist $x_1,x_2$, with 
$\bar x -c < x_1<x_2<\bar x < x_1+h$ and 
\begin{equation}\label{ASN}
Q(x_1)=\widehat Q(x_2), \quad Q(x)>\widehat Q(x) \quad x\in[x_1,\bar x),
\qquad
\mbox{and}
\quad 
Q'(x) < \widehat Q'(x) \quad x\in[x_1,\bar x],
\end{equation}
see Figure~\ref{fig:1} for an illustration.  Note that,  since $Q(\cdot)$ and $\widehat Q(\cdot)$ 
are  smooth functions, by continuity the assumptions~\eqref{ASN} 
hold for some $x_1,x_2$ 
sufficiently close to $\bar x$.
 This  implies
\[
Q(x_1 + s) < \widehat Q(x_2+s),\qquad \forall s\in(0,h].
\]

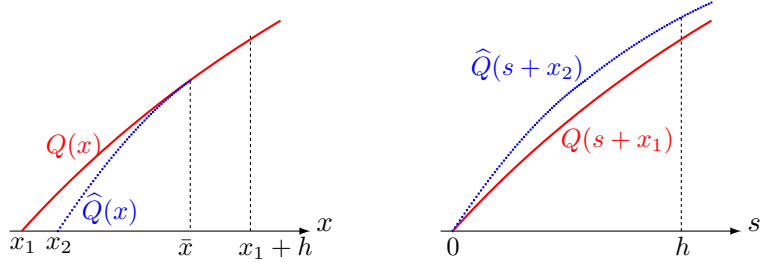
\begin{figure}[htbp]
\begin{center}
\setlength{\unitlength}{0.8mm}
\begin{picture}(70,40)(0,-4)  
\put(0,0){\vector(1,0){50}}\put(51,0){$x$}
\multiput(30,25)(0,-1){25}{\line(0,-1){0.5}}
\multiput(40,0)(0,1){33}{\line(0,1){0.5}}
\put(0,-3){\small$x_1$} 
\put(6,-3){\small$x_2$} 
\put(28,-4){\small$\bar x$} 
\put(38,-4){\small$x_1+h$} 
\thicklines
\color{red}
\qbezier(2,0)(20,20)(45,35)
\put(6,13){\small$Q(x)$}
\color{blue}
\qbezier[60](8,0)(20,18)(30,25)
\put(12,2){\small$\widehat Q(x)$}
\end{picture}
\begin{picture}(60,36)(0,-4)  
\put(0,0){\vector(1,0){50}}\put(51,0){$s$}
\multiput(40,0)(0,1){36}{\line(0,1){0.5}}
\put(1,-4){\small $0$}
\put(39,-4){\small $h$}
\thicklines
\color{red}
\qbezier(2,0)(20,20)(45,35)
\put(20,14){\small$Q(s+x_1)$}
\color{blue}
\qbezier[60](2,0)(14,18)(24,25)
\qbezier[50](24,25)(34,33)(45,38)
\put(5,26){\small$\widehat Q(s+x_2)$}
\end{picture}
\caption{Left: Graphs of $Q(x)$ and $\widehat Q(x)$ on $[x_1, x_1+h]$. 
Right: Graphs of shifted functions $Q(s+x_1)$ and $\widehat Q(s+x_2)$.}
\label{fig:1}
\end{center}
\end{figure}

We now have
\begin{equation}\label{eq:c2}
\mathcal{A}(Q;x_1) < \mathcal{A}(\widehat Q;x_2).
\end{equation}
Since both $Q(\cdot),\widehat Q(\cdot)$ are solutions of~\eqref{eq:p1}, we have
\[
Q(x_1) v(\mathcal{A}(Q;x_1)) = \widehat Q(x_2) v(\mathcal{A}(\widehat Q;x_2))
\,,\quad \text{and hence}\quad
v(\mathcal{A}(Q;x_1))=v(\mathcal{A}(\widehat Q;x_2)),
\]
a contradiction to~\eqref{eq:c2}.
Thus, we conclude that $Q(x) \equiv \widehat Q(x)$ for all $x\le x_0$,
proving the uniqueness of the solutions for the initial value problem.
\end{proof}

\begin{remark}\label{rm:Num}
To generate profiles of $Q(x)$, the fixed point iterations for the Picard operator $\P$ is not conveniet. Instead,  we adopt the following numerical scheme: 
Fix a step size $\dx$, and discretize the space by
\[
x_i = x_0+ i \dx, \quad i\in\mathbb{Z}^-.
\]
We construct a continuous  and piecewise affine approximate solution 
$Q^\Delta(\cdot)$.  Denote the approximate value at the grid points as
\[
 Q_0 \;\dot=\; \Phi(x_0) <1, \qquad 
 \mbox{and}\quad Q_i \;\dot=\; Q^\Delta(x_i),
\quad i\in\mathbb{Z}^-.
\]
Then we have
\[
Q^\Delta(x) = Q_{i-1} \frac{x_i-x}{\dx} 
+ Q_i \frac{x-x_{i-1}}{\dx} \quad \mbox{for}~ x\in[x_{i-1},x_i],
\qquad \forall i\in\mathbb{Z}^-.
\]

Fix an $i\in\mathbb{Z}^-$, and assume that $Q^\Delta(x)$ is given for all $x\ge x_i$. The value $Q_{i-1}$ is generated by solving the nonlinear equation 
\begin{equation}\label{eq:NN}
\mathcal{G}(Q_{i-1}) \;\dot=\;  Q_{i-1} \cdot v(\mathcal{A}(Q^\Delta;x_{i-1})) -  Q_{i} 
\cdot v(\mathcal{A}(Q^\Delta;x_{i})) =0.
\end{equation}
Numerically,~\eqref{eq:NN} can be computed efficiently using  Newton iterations,
 with $Q_i$ as the initial guess.
We  remark that~\eqref{eq:NN} can be viewed as a finite difference approximation
for~\eqref{eq:p2},
\begin{equation}\label{eq:NN2}
\frac{Q_{i}-Q_{i-1}}{\dx} \cdot v(\mathcal{A}(Q^\Delta;x_{i-1})) + Q_i \cdot
\frac{v(\mathcal{A}(Q^\Delta;x_{i}))-v(\mathcal{A}(Q^\Delta;x_{i-1}))}{\dx}=0.
\end{equation}
The algorithm~\eqref{eq:NN2} is somewhat similar to the symplectic method for 
systems of ODEs.

We compute
\begin{eqnarray}
\mathcal{G}(Q_i) &=& Q_{i} \cdot \left[v(\mathcal{A}(Q^\Delta;x_{i-1})) -  v(\mathcal{A}(Q^\Delta;x_{i}))\right] > 0, \label{eq:gg0} \\
\mathcal{G}(0) &=& - Q_{i} \cdot v(\mathcal{A}(Q^\Delta;x_{i})) <0,\label{eq:gg1}
\end{eqnarray}
and 
\begin{eqnarray*}
&& \hspace{-1.5cm} \frac{\partial}{\partial Q_{i-1}} 
\mathcal{A}\left(Q^\Delta;x_{i-1}\right) 
~=~ 
\frac{\partial}{\partial Q_{i-1}}  \int_{x_{i-1}}^{x_i} \left[
Q_{i-1} \frac{x_i-y}{\dx} + Q_i \frac{y-x_{i-1}}{\dx}
\right] w(y-x_{i-1})\; dy \\
&=& \int_{x_{i-1}}^{x_i}  \frac{x_i-y}{\dx}  w(y-x_{i-1})
\; dy  ~=~ \int_0^{\dx} \frac{\dx-s}{\dx} w(s) \, ds.
\end{eqnarray*}
Then, for $\dx$ sufficiently small, the above term arbitrarily small, and we have
\begin{equation}\label{eq:gg3}
\mathcal{G}'(Q_{i-1}) = v (\mathcal{A}(Q^\Delta;x_{i-1})) 
+ Q_{i-1}  v'(\mathcal{A}(Q^\Delta;x_{i-1})) 
\cdot \int_0^{\dx} \frac{\dx-s}{\dx} w(s) \, ds
>0.
\end{equation}
By~\eqref{eq:gg0}-\eqref{eq:gg3} we conclude that there exists a unique solution  
$Q_{i-1}$ of~\eqref{eq:NN}, satisfying $0<Q_{i-1}<Q_i$. 

Iterating the above step for $i\in\mathbb{Z}^-$, we
generate a sequence of solutions $\{Q_i\}$ satisfying 
\begin{equation}\label{eq:Fi}
0 < Q_{i-1} <Q_i< Q_0, \quad   Q(x_i) v(\mathcal{A}(Q^\Delta;x_i) )= Q(x_0) v(\mathcal{A}(Q^\Delta;x_0))=\bar f_0, \quad \forall i \in\mathbb{Z}^- .
 \end{equation}

We  now establish an upper bound on the  gradient of 
$Q^\Delta$, on $x<x_0$. 
Recall the constants $L_v,\kappa,L_Q$ defined in~\eqref{CCC}. 
Using the scheme \eqref{eq:NN2}, we compute
\begin{eqnarray*}
\frac{Q_i-Q_{i-1}}{\Delta x} &=&  Q_{i-1} v(\mathcal{A}(Q^\Delta;x_{i-1}))
\left[ \frac{1}{v(\mathcal{A}(Q^\Delta;x_{i}))} - \frac{1}{v(\mathcal{A}(Q^\Delta;x_{i-1}))} \right] \\
&\le& \bar f_0 L_v \left[ \mathcal{A}(Q^\Delta;x_{i}) -\mathcal{A}(Q^\Delta;x_{i-1}) \right] \le \bar f_0 L_v  \kappa  = L_Q.
\end{eqnarray*}
Since $Q^\Delta(\cdot)$ is piecewise affine on $x<x_0$, we conclude that
\[ (Q^\Delta)'(x) \le L_Q \quad \forall x<x_0.\]
Convergence of  the sequence $\{Q^{\dx}(\cdot)\}$ as $\dx\to 0$
follows from Helly's compactness theorem.
This offers an alternative proof for
the existence of solutions for the initial value problem.
\end{remark}

\subsection{Asymptotic value problems} \label{sec:3.3}

\begin{theorem}[Asymptotic value problem]\label{th2}
Assume that $w(\cdot)$ satisfies~\eqref{eq:wc1} and $v(\cdot)$ satisfies~\eqref{eq:vc}. 
Let $\rho^-\in\mathbb{R}$ and $\rho^-\in\mathbb{R}$ be given which satisfy
\[
0< \rho^-<\hat\rho<\rho^+<1 \quad\mbox{and}\quad
f(\rho^-)=f(\rho^+).
\]
Consider the asymptotic value problem for~\eqref{eq:p1}, with 
\begin{equation}\label{eq:BC2}
\lim_{x\to\infty} Q(x) = \rho^+,
\qquad
\lim_{x\to-\infty} Q(x) = \rho^-.
\end{equation}
There exist Lipschitz continuous and monotone solutions for 
the asymptotic value problem. 

Furthermore, the solutions are unique up to horizontal shifts, in the following sense. 
Let  $Q_1(\cdot)$ and $Q_2(\cdot)$ be two solutions of the asymptotic value problem
with the same asymptotic conditions~\eqref{eq:BC2}, then 
there exists a constant $c\in\mathbb{R}$ such that
$Q_1(x)=Q_2(x+c)$ for all $x\in\mathbb{R}$.
\end{theorem}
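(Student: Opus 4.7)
The plan is to mirror the strategy used for Theorem~\ref{th2b}, replacing the FtLs initial-value result (Theorem~\ref{th:2f}) by its PDE counterpart (Theorem~\ref{th1}). Let $\lambda_+$ be the exponential rate given by Lemma~\ref{lm:AL} at $\rho^+$, and set $\Phi(x)=\rho^+-e^{-\lambda_+ x}$, a smooth increasing function satisfying~\eqref{eq:init} on any half-line $[x_n,+\infty)$. Pick $x_n\to+\infty$ and let $Q^{(n)}$ be the unique monotone Lipschitz solution of the initial-value problem for~\eqref{eq:p1} with initial data $\Phi$ on $[x_n,+\infty)$ produced by Theorem~\ref{th1}. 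By Lemma~\ref{lm1}, $Q^{(n)}$ has an asymptote $\rho^-_n<\hat\rho$ satisfying $f(\rho^-_n)=\bar f_n$ with $\bar f_n:=\Phi(x_n)\,v(\mathcal{A}(\Phi;x_n))$. As $n\to\infty$, both $\Phi(x_n)$ and $\mathcal{A}(\Phi;x_n)$ tend to $\rho^+$, so $\bar f_n\to f(\rho^+)=\bar f$; together with $\rho^-_n<\hat\rho$ this forces $\rho^-_n\to\rho^-$.

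The Lipschitz constant $\bar f_n L_v \kappa$ given in Theorem~\ref{th1} is uniformly bounded in $n$, so $\{Q^{(n)}\}$ is equi-Lipschitz. Since $\rho^-_n<\hat\rho<\Phi(x_n)$ for large $n$, there exists $\xi_n$ with $Q^{(n)}(\xi_n)=\hat\rho$; define $\hat Q^{(n)}(x):=Q^{(n)}(x-\xi_n)$. Helly's theorem and Arzel\`a--Ascoli extract a subsequence converging pointwise, and uniformly on compact sets, to a monotone nondecreasing Lipschitz function $Q$ with $Q(0)=\hat\rho$. Passing to the limit in the conservation identity $Q^{(n)}(x)v(\mathcal{A}(Q^{(n)};x))=\bar f_n$ yields $Q(x)v(\mathcal{A}(Q;x))=\bar f$, so $Q$ solves~\eqref{eq:p1}. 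Since $Q$ is bounded and monotone, $Q^\pm:=\lim_{x\to\pm\infty}Q(x)$ exist and both satisfy $f(Q^\pm)=\bar f$; in the nontrivial regime $\rho^-<\hat\rho<\rho^+$ the equation $f(\rho)=\bar f$ has only the two roots $\rho^-,\rho^+$, and the ordering $Q^-\le Q(0)=\hat\rho\le Q^+$ forces $Q^\pm=\rho^\pm$.

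For uniqueness, I would reproduce the argument of Theorem~\ref{th2b}. Assume $Q_1,Q_2$ are two solutions that are not translates of one another. Since they share the same asymptotic values, appropriate horizontal shifts make their graphs cross. Let $\hat x$ be the rightmost intersection, so (after relabeling if necessary) $Q_1(\hat x)=Q_2(\hat x)$ and $Q_1(x)>Q_2(x)$ for every $x>\hat x$. Because $w\ge 0$ has total mass one on $(0,h)$ and is continuous, the strict pointwise inequality $Q_1>Q_2$ on $(\hat x,\hat x+h]$ transfers to $\mathcal{A}(Q_1;\hat x)>\mathcal{A}(Q_2;\hat x)$; by monotonicity of $v$ this gives $v(\mathcal{A}(Q_1;\hat x))<v(\mathcal{A}(Q_2;\hat x))$. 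Combined with $Q_1(\hat x)=Q_2(\hat x)$ this contradicts the conserved identity $Q_i(\hat x)v(\mathcal{A}(Q_i;\hat x))=\bar f$.

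The step I expect to be most delicate is the identification of the asymptotic limits of the limit profile $Q$: one must exclude the possibility that the extracted limit degenerates to the constant profile $Q\equiv\hat\rho$, which would make $Q^\pm=\hat\rho$ a spurious solution of $f(\rho)=\bar f$ in the borderline case. The normalization $\hat Q^{(n)}(0)=\hat\rho$ keeps the profiles ``centered'', and the exponential lower bounds from Lemma~\ref{lm:AL} prevent the transition layer from flattening out; together with the fact that $f(\rho)=\bar f$ admits exactly the two roots $\rho^\pm$ in the nontrivial regime, this pins down the correct asymptotes and closes the existence proof.
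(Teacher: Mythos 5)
Your proposal is correct and follows the same overall strategy as the paper's proof: approximate by solutions of initial value problems with data $\Phi(x)=\rho^+-e^{-\lambda_+x}$ posed on $[x_n,+\infty)$, use Theorem~\ref{th1} and Lemma~\ref{lm1} to obtain monotone, uniformly Lipschitz profiles with $\bar f_n\to\bar f$ and $\rho^-_n\to\rho^-$, normalize by a horizontal shift so that the value $\hat\rho$ is attained at $x=0$, extract a limit by Helly and Arzel\`a--Ascoli, pass to the limit in the conserved identity, and prove uniqueness by the rightmost-crossing argument. The one place where you genuinely diverge is the identification of the asymptotic values of the limit profile. The paper derives the auxiliary estimate \eqref{eq:FEE}, carries it through the limit, and uses it to produce a positive lower bound on $Q(x+h)-Q(x)$ for large $x$ whenever $Q^+\ne\rho^+$, contradicting convergence. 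You instead let $x\to\pm\infty$ directly in $Q(x)\,v(\mathcal{A}(Q;x))=\bar f$ to get $f(Q^\pm)=\bar f$, observe that by \eqref{eq:stag} this equation has exactly the two roots $\rho^\pm$ in $(0,1)$ (which in particular rules out the degenerate constant profile, since $f(\hat\rho)>\bar f$), and use the normalization $Q(0)=\hat\rho$ together with monotonicity to select the correct root on each side. This is shorter, valid, and avoids \eqref{eq:FEE} altogether; note that your closing paragraph's worry about the constant profile is already disposed of by this root-counting argument, so the appeal there to the exponential rates of Lemma~\ref{lm:AL} is superfluous. Both your proof and the paper's leave the same minor details implicit (that the conserved identity for the shifted approximants holds on half-lines that must exhaust $\mathbb{R}$ in the limit, and that a rightmost crossing point exists after shifting), so your write-up is at the same level of rigor as the original.
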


\begin{proof}
\textbf{Existence.} 
The  existence of solutions to the asymptotic value problem 
is established through convergence of approximate solutions,
similar to the approach used for the proof of Theorem~\ref{th2b}.
Let $\lambda_+>0$ be the exponential rate given in Lemma~\ref{lm:AL},
and let $\{x_n: n\in\mathbb{N}\}$ be an increasing sequence of real numbers 
such that 
 $ \lim_{n\to\infty} x_n = +\infty$. 
For each given $n\in\mathbb{N}$,  let $Q^{(n)}(\cdot)$ 
be the solution of the initial value problem of~\eqref{eq:p1}, defined on $x\le x_n$,
with initial condition
\[
Q^{(n)}(x)=\Phi(x)\;\dot=\;  \rho^+ - e^{-\lambda_+ x},\quad \mbox{on}\quad x \ge x_n.
\]
By Theorem~\ref{th1}, $Q^{(n)}(\cdot)$ exists and is unique, and it satisfies
\[
Q^{(n)}(x) \cdot v(\mathcal{A}(Q^{(n)};x)) = \bar f_n 
\quad
\mbox{where}\quad
\bar f_n = \Phi(x_n) \cdot v\left(\int_{x_n}^{x_n+h} \Phi(y) w(y-x_n)\, dy\right).
\]
By Lemma~\ref{lm1}, we have 
\[
\lim_{x\to-\infty} Q^{(n)}(x) = \rho^-_n\qquad  \mbox{where} \quad
f(\rho^-_n) =\bar f_n, \quad \rho^-_n<\hat\rho.
\]
Using the exact expression of $\Phi(x)$, we compute
\begin{eqnarray*}
\lim_{n\to\infty} \bar f_n &=& \lim_{x_n\to\infty} (\rho^+ - e^{-\lambda_+ x_n}) \cdot v\left( 
\int_{x_n}^{x_n+h} (\rho^+ - e^{-\lambda_+ y})w(y-x_n)\, dy
\right)\\
&=& \rho^+ \cdot v\left(\rho^+  \int_0^h  w(s) \, ds \right)
~=~ \rho^+ v(\rho^+) = f(\rho^+)=f(\rho^-).
\end{eqnarray*}
This  implies that 
\[\lim_{n\to\infty} \rho^-_n=\rho^-.\]

Furthermore, replacing $Q$ by $Q^{(n)}$ in ~\eqref{eq:p1} and 
subtracting it  from the identity
\[
\mathcal{A}\left(Q^{(n)};x\right) \cdot v\left(\mathcal{A}(Q^{(n)};x)\right) 
= f\left(\mathcal{A}(Q^{(n)};x)\right),
\]
we get the estimate
\[
\frac{f(\mathcal{A}(Q^{(n)};x))-\bar f}{v(\mathcal{A}(Q^{(n)};x))} 
= \mathcal{A}(Q^{(n)};x) -Q^{(n)}(x) 
< Q^{(n)}(x+h) -Q^{(n)}(x).
\]

Finally, 
let $\tilde Q^{(n)}$ be a horizontally shifted function of $Q^{(n)}$, such that 
for some $c_n\in\mathbb{R}$,
\[
\tilde Q^{(n)}(x) = Q^{(n)}(x+c_n) \quad \forall x, \qquad \mbox{and}\quad
\tilde Q^{(n)}(0)=\hat \rho.
\]

Then, $\tilde Q^{(n)}$ is bounded, Lipschitz continuous, and monotonically increasing. 
 By Helly's compactness Theorem,  as $n\to\infty$, 
 there exists a subsequence 
of functions $\{\tilde Q^{(n)}\}$ that converges uniformly  on bounded set
to a limit function $Q$. 
The limit function is bounded, Lipschitz continuous, and monotone increasing, 
satisfying the integral equation~\eqref{eq:p1} and the estimate
\begin{equation}\label{eq:FEE}
\frac{f(\mathcal{A}(Q^;x))-\bar f}{v(\mathcal{A}(Q^;x))} 
 < Q(x+h) -Q(x).
\end{equation}

It remains to establish the asymptotic values of the limit function $Q$.
Since $Q$ is monotone and bounded, the limits as $x\to\pm\infty$ exist. 
Let $Q^+ \;\dot=\; \lim_{x\to+\infty} Q(x)$
and assume that $Q^+ \not=\rho^+$.  
From Lemma~\ref{lm:AL} it holds $Q^+>\hat\rho$,
and from the construction $Q^+ <\rho^+$. 
Therefore we have $\bar f > f(Q^+)$. 
Let $\epsilon>0$. There exists an $M\in\mathbb{R}$, sufficiently large, such that 
$Q^+-Q(x) <\epsilon$ for all $x>M$. 
In particular, we
have
\[  Q(x+h) - Q(x) < \epsilon \qquad \forall x >M.\]
However, from~\eqref{eq:FEE} we get, for any $x>M$,
\[
  Q(x+h)-Q(x) >  f(Q^+) - \bar f>0 
\]
a contradiction. 
We conclude that $Q^+=\rho^+$. 
A completely similar argument gives the limit as $x\to-\infty$, 
proving the asymptotic values~\eqref{eq:BC2}.
This establishes the existence of solutions for the asymptotic value problems.

\medskip

\textbf{Uniqueness.} 
The uniqueness of solutions is proved by a contradiction argument. 
Let $Q_1$ and $Q_2$ be two distinct solutions for the asymptotic value problem
with the same asymptotic values~\eqref{eq:BC2}. 
We may horizontally shift the profiles, such that the graphs of $Q_1$ and $Q_2$ intersect.
Let $y\in\mathbb{R}$ be the rightmost intersection point, such that
\[
Q_1(y)=Q_2(y), \qquad \mbox{and}\quad Q_1(x) > Q_2(x) \quad \forall x>y.
\]
Then, we have 
$\mathcal{A}(Q_1;y) > \mathcal{A}(Q_2;y)$, 
so
\begin{equation}\label{CDCD}
Q_1(y) v( \mathcal{A}(Q_1;y)) < Q_2(y) v( \mathcal{A}(Q_2;y)) . 
\end{equation}
On the other hand, by~\eqref{eq:p1} we must have 
\[
Q_1(y) v( \mathcal{A}(Q_1;y)) = Q_2(y) v( \mathcal{A}(Q_2;y)) =\bar f = f(\rho^\pm),
\]
which leads to a contradiction to~\eqref{CDCD}.
This proves that solutions to the asymptotic value problems are unique up to horizontal
shifts.
\end{proof}

\paragraph{Sample profiles.}
Sample profiles for $Q(\cdot)$ with various $(\rho^-,\rho^+)$ values
and two different weight functions $w(\cdot)$ are given in Figure~\ref{fig:2}. 
The profiles are generated using the algorithm in Remark~\ref{rm:Num}
in the approximating procedure in the proof of Theorem~\ref{th2}.

\begin{figure}[htbp]
\begin{center}
\includegraphics[width=7.5cm]{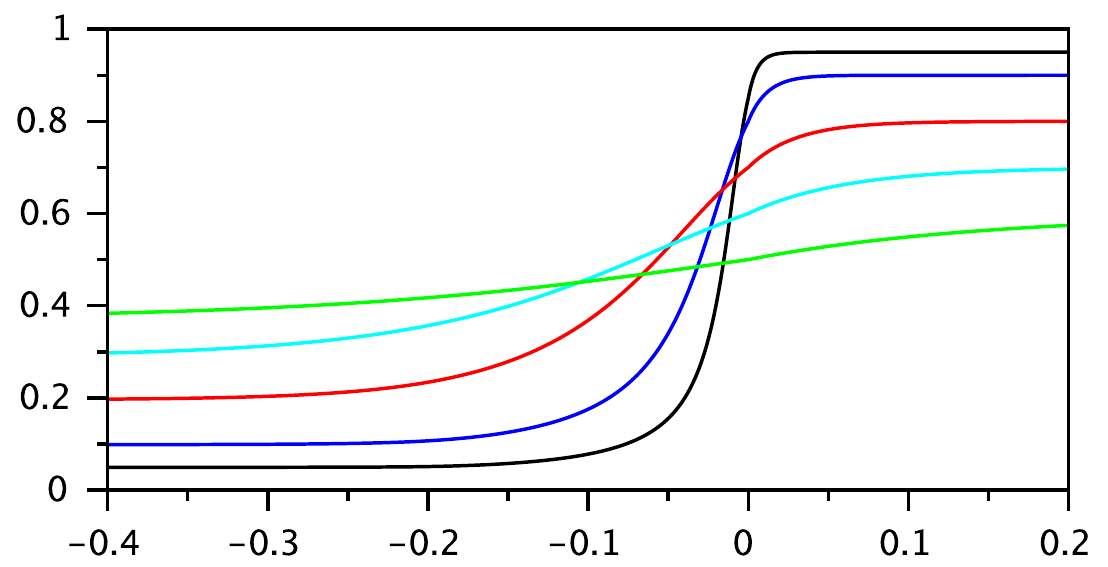}$\quad$
\includegraphics[width=7.5cm]{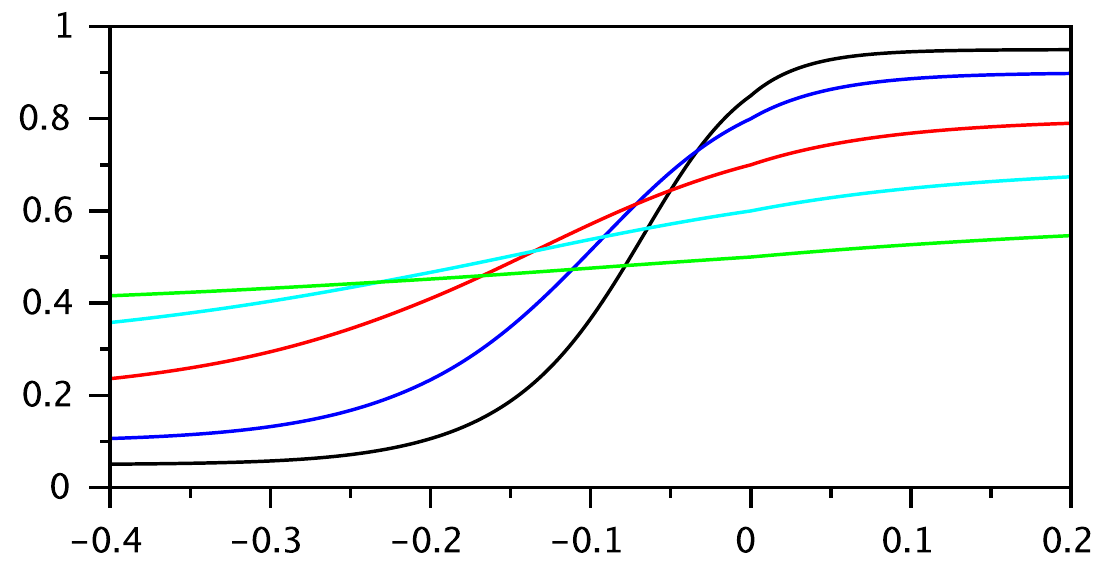}
\caption{Typical profiles $Q(\cdot)$ with $v(\rho)=1-\rho, h=0.2$ and 
various $\rho^\pm$ values.
For the left plot we use $w(x)=\frac{2}{h}-\frac{2x}{h^2}$ for $x\in(0,h)$,
and for right plot we use $w(x) = \frac{2x}{h^2}$ for $x\in(0,h)$. }
\label{fig:2}
\end{center}
\end{figure}

\subsection{Stability of the traveling waves}

For the Cauchy problem of~\eqref{eq:claw1}, the existence and uniqueness 
of entropy weak solution is established in~\cite{BG2016}. 
In particular,  if the initial condition $\rho(0,\cdot)$ is smooth, then the solution
$\rho(t,\cdot)$ remains smooth for all $t\ge 0$.

Under the additional assumptions~\eqref{eq:wc2} and~\eqref{eq:vc2}, we now show that 
the stationary profiles are the stable  time asymptotic limit for the solutions of the
Cauchy problem of the non-local conservation law, under mild assumptions on 
smooth initial condition.

\begin{theorem}[Stability]\label{th3}
Let  $w(\cdot)$ satisfy~\eqref{eq:wc1} and~\eqref{eq:wc2}, 
and let $v(\cdot)$ satisfy~\eqref{eq:vc} and~\eqref{eq:vc2}.
Let $\rho^-,\rho^+$ satisfy 
\[
f(\rho^-)=f(\rho^+) = \bar f, \qquad 0 < \rho^- < \hat\rho < \rho^+ <1.
\]
Let $Q(x)$ be the unique stationary profile with  
asymptotic conditions~\eqref{eq:BC2} and $Q(0)=\hat\rho$. 

Let $\rho(0,\cdot)$ be a smooth function, and assume that there exist 
constants $c_1\in\mathbb{R},c_2\in\mathbb{R}$ 
such that 
\begin{equation}\label{Q1Q2}
Q(x+c_1) \le \rho(0,x) \le Q(x+c_2), \qquad \forall x \in\mathbb{R}.
\end{equation}
For $t\ge0$, let $\rho(t,\cdot)$ be the solution of 
the Cauchy problem for~\eqref{eq:claw1}, 
with initial condition $\rho(0,\cdot)$. 
Then, there exists a constant $\bar c\in\mathbb{R}$ such that 
\begin{equation}\label{Q-conv}
\lim_{t\to\infty} \left[ \rho(t,x) - Q(x+\bar c) \right]=0, \qquad \forall x\in\mathbb{R}. 
\end{equation}
\end{theorem}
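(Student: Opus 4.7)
My approach mirrors Theorem~\ref{th3b}, with the discrete car indices replaced by the continuous spatial variable. By Theorem~\ref{th2}, the stationary profiles connecting $\rho^-$ and $\rho^+$ form a one-parameter family $\{Q(\cdot + c)\}_{c \in \R}$ that foliates $\R \times (\rho^-,\rho^+)$, so the mapping $\Phi$ from~\eqref{Phi} extends verbatim to the PDE setting. Setting
\[
\phi(t,x) \;\dot=\; \Phi\bigl(x,\rho(t,x)\bigr), \qquad c^+(t) \;\dot=\; \sup_x \phi(t,x), \qquad c^-(t) \;\dot=\; \inf_x \phi(t,x),
\]
hypothesis~\eqref{Q1Q2} gives $c^\pm(0) \in [c_1,c_2]$, and the plan is to prove that $c^+$ is non-increasing, $c^-$ non-decreasing, and $c^+(t) - c^-(t) \to 0$. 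Since $Q(\cdot + c^-(t)) \le \rho(t,\cdot) \le Q(\cdot + c^+(t))$ by construction, this yields~\eqref{Q-conv} with $\bar c$ the common limit.

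The heart of the argument is the continuous counterpart of~\eqref{eq:ww}. Assume $Q_c \;\dot=\; Q(\cdot + c^+(t))$ touches $\rho(t,\cdot)$ from above at $\bar x$, so $Q_c \ge \rho$ everywhere with equality and first-order tangency at $\bar x$. Combining the stationary relation $(Q_c\, v(\mathcal{A}(Q_c)))_x \equiv 0$ with the conservation law $\rho_t = -(\rho\, v(\mathcal{A}(\rho)))_x$, and inserting $\rho(t,\bar x) = Q_c(\bar x)$, $\rho_x(t,\bar x) = Q_c'(\bar x)$, a short calculation yields
\[
\rho_t(t,\bar x) = Q_c(\bar x)\, v\bigl(\mathcal{A}(\rho;t,\bar x)\bigr)\Big[\,\Gamma\bigl(\mathcal{A}(\rho;t,\bar x)\bigr)\,\partial_x\mathcal{A}(\rho;t,\bar x) \,-\, \Gamma\bigl(\mathcal{A}(Q_c;\bar x)\bigr)\,\partial_x\mathcal{A}(Q_c;\bar x)\,\Big],
\]
where $\Gamma(s) \;\dot=\; -v'(s)/v(s) \ge 0$. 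Three monotonicity facts --- the direct continuous analogues of the inequalities $A_2<A_1$, $B_2\le B_1$, $C_2\le C_1$ used in Theorem~\ref{th3b} --- render the bracket non-positive: (i) $\mathcal{A}(\rho;t,\bar x) \le \mathcal{A}(Q_c;\bar x)$ since $\rho \le Q_c$ globally; (ii) $\Gamma$ is non-decreasing on $[0,1]$, because $\Gamma'(s) = -v''(s)/v(s) + (v'(s)/v(s))^2 \ge 0$ whenever $v''\le 0$ and $v>0$; (iii) $\partial_x\mathcal{A}(\rho;t,\bar x) \le \partial_x\mathcal{A}(Q_c;\bar x)$, which after integration by parts reads $(\rho - Q_c)(\bar x + h)\,w(h) - \int_0^h (\rho - Q_c)(\bar x + u)\, w'(u)\,du \le 0$, and holds because $\rho - Q_c \le 0$, $w \ge 0$, and $w' \le 0$. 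A mirror argument at a touching from below gives $\rho_t(t,\bar x) \ge 0$.

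These pointwise bounds show, via an envelope-theorem argument (using that $\Phi_\rho>0$ and that $\partial_x\phi$ vanishes at a maximum), that $c^+$ cannot increase; so $c^+$ is non-increasing and bounded, hence converges to some $\bar c^+$, and symmetrically $c^- \nearrow \bar c^-$. To close the gap and obtain $\bar c^+ = \bar c^-$, I would argue by contradiction: if $\bar c^+ > \bar c^-$, a quantitative version of the displayed identity on a bounded "core" region (where $Q_{\bar c^\pm}$ is strictly separated from $\rho^\pm$) makes the bracket uniformly strictly negative, forcing $c^+(t)$ to keep decreasing past $\bar c^+$, a contradiction.

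The principal obstacle is this last step --- converting the infinitesimal pointwise bound $\rho_t \le 0$ at individual touching points into the global closure $c^+(t) - c^-(t) \to 0$. Two features of the nonlocal conservation law complicate matters: because $\rho(t,\cdot)$ and each $Q_c$ share the asymptotes $\rho^\pm$, the touching set can be unbounded, so a direct strong-maximum-principle argument does not apply; and the purely hyperbolic (non-smoothing) structure means the argument must rely entirely on the geometric ingredients $w'\le 0$ and $v''\le 0$. I would address these by using the exponential convergence rates from Lemma~\ref{lm:AL} to reduce the envelope-closure analysis to a bounded spatial region, and then obtaining a uniform strict lower bound on $-\rho_t$ at the touching points there by a compactness argument combined with the strict inequalities in facts (ii)--(iii).
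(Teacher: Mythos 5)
Your proposal follows essentially the same route as the paper: the same touching-point comparison built on the three monotonicity ingredients ($\rho\le Q_c$ globally, $v''\le 0$, $w'\le 0$) --- your $\Gamma$-identity is just a repackaging of the paper's three-term decomposition of $[\rho\, v(\mathcal{A}(\rho))]_x-[\hat Q\, v(\mathcal{A}(\hat Q))]_x$ in~\eqref{eq:stabest} --- followed by the same shift-function/envelope argument imported from Theorem~\ref{th3b}. The closure step $c^+(t)-c^-(t)\to 0$ that you flag as the principal obstacle is also left essentially unproved in the paper, which asserts it and refers to Step~2 of Theorem~\ref{th3b}, so your sketch is at least as complete as the published argument.
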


\begin{proof} \textbf{Step 1.}
We observe that assumption~\eqref{Q1Q2} implies 
\[ \lim_{x\to\infty} \rho(0,x) = \rho^+,
\qquad
\lim_{x\to-\infty} \rho(0,x) = \rho^-.\]
Fix a time $t\ge0$. Let $\hat Q(x) = Q(x+\hat c)$ for some $\hat c\in\mathbb{R}$ 
be a profile  such that
\[
\hat Q(x) \ge \rho(t,x), \qquad \forall x\,,
\]
and let $\hat x$ be the point where 
\begin{equation}\label{eq:s2}
\hat Q(\hat x) = \rho(t,\hat x), 
\quad
\hat Q_x(\hat x) = \rho_x(t,\hat x), 
\quad \mbox{and}\quad
\hat Q(x) > \rho(t,x) \quad \forall x>\hat x.
\end{equation}

We claim that 
\begin{equation}\label{eq:s1}
\rho_t(t,\hat x) <0, 
\qquad \mbox{i.e.,}\quad 
[\rho(t,\hat x) v(\mathcal{A}(\rho; t,\hat x))]_x >0. 
\end{equation}
Indeed,  we have the estimate
\begin{equation}\label{eq:s3}
\mathcal{A}(\hat Q;\hat x) - \mathcal{A}(\rho;t,\hat x)=
\int_0^h [\hat Q(\hat x+s)-\rho(\hat x+s)]w(s)\; ds >0. 
\end{equation}
Since $v'< 0$ and  $v''\le 0$, we have
\begin{equation}\label{eq:s4}
v(\mathcal{A}(\hat Q;\hat x)) < v(\mathcal{A}(\rho;t,\hat x)),\qquad
v'(\mathcal{A}(\hat Q;\hat x)) \le v'(\mathcal{A}(\rho;t,\hat x)).
\end{equation}
Finally, since 
\begin{eqnarray*}
\mathcal{A}(\rho;t,\hat x)_x&=& \rho(t,\hat x+h)w(h) - \rho(t,\hat x)w(0) + \int_{\hat x}^{\hat x+h} -\rho(t,y) w'(y-x)\; ds,\\
\mathcal{A}(\hat Q;\hat x)_x&=& 
\hat Q(\hat x+h)w(h) - \hat Q(\hat x)w(0) + \int_{\hat x}^{\hat x+h} -\hat Q(y) w'(y-x)\; ds, 
\end{eqnarray*}
and using $w'(x) \le 0$, we get 
\begin{eqnarray}
&& \hspace{-1cm}\mathcal{A}(\rho;t,\hat x)_x-
\mathcal{A}(\hat Q;\hat x)_x 
\nonumber \\
&=&[\rho(t,\hat x+h)-\hat Q(t,\hat x+h)] w(h) 
-\int_{\hat x}^{\hat x+h} [\rho(t,y)-\hat Q(y)] w'(y-x)\; ds ~<~ 0.
\label{eq:s5}
\end{eqnarray}

By using~\eqref{eq:p1}, we compute, at $(t,\hat x)$, 
\begin{eqnarray}
[\rho(t,\hat x) v(\mathcal{A}(\rho;t,\hat x))]_x &=&
[\rho(t,\hat x) v(\mathcal{A}(\rho;t,\hat x))]_x -[\hat Q(\hat x) v(\mathcal{A}(\hat Q;\hat x))]_x 
\nonumber  \\
&=& \hat Q'(\hat x) [v(\mathcal{A}(\rho))-v(\mathcal{A}(\hat Q))] + \hat Q(\hat x) v'(\mathcal{A}(\rho)) [
\mathcal{A}(\rho)_x-\mathcal{A}(\hat Q)_x]  \nonumber
\\
&& +\,
\hat Q(\hat x) \mathcal{A}(\hat Q)_x [v'(\mathcal{A}(\rho))-v'(\mathcal{A}(\hat Q))].
\label{eq:stabest} 
\end{eqnarray}
Since the profile $\hat Q(x)$ is monotone increasing, we have $\mathcal{A}(\hat Q)_x>0$.
By further using the properties~\eqref{eq:s3}--\eqref{eq:s5}, 
all three terms on the righthand side of~\eqref{eq:stabest} are positive.
We conclude~\eqref{eq:s1}.

\medskip

Similarly, let $\tilde Q(x)\;\dot=\;Q(x+\tilde c)$ for some $\tilde c\in\mathbb{R}$ 
be a profile such that
$\tilde Q(x) \le \rho(t,x)$  for all $x$, 
and let $\hat x$ be the largest $x$ value where $\tilde Q(\hat x)=\rho(\hat x)$. 
By a completely similar argument we conclude that  $\rho_t(t,\hat x) >0$. 
The proof is completely similar and we omit the details. 

\medskip

\noindent\textbf{Step 2.}
The time asymptotic stability follows from the properties in Step 1, 
with similar arguments as in Step 2 of the proof of Theorem~\ref{th3b}. 
We skip the details.
\end{proof}

\textbf{Numerical simulations.} 
Solutions for the nonlocal conservation law~\eqref{eq:claw1}  using oscillatory initial condition
\begin{equation}\label{SimIC2}
\rho(0,x) = \begin{cases} 0.2, & x \le -0.3,\\
0.5-0.3*\sin(5\pi x), \qquad& -0.3 < x< 0.3,\\
0.8, & x\ge 0.3.
\end{cases}
\end{equation}
are computed  with a finite difference method,  at various time $t$.  
See Figure~\ref{fig:4}, where two cases of the weight functions $w(\cdot)$ are treated. 
In the plots in the top row we have $w'<0$. 
Here, we observe that the oscillations are quickly damped
and that the solution approaches some profile as $t$ grows.
On the other hand, in the plots in the bottom row we have $w'>0$, 
and we observe that the solution becomes more oscillatory as $t$ grows, 
indicating  instability of the profiles. 
This behavior is analyzed in some detail in Section~\ref{sec:insta}.

\begin{figure}[htbp]
\begin{center}
\includegraphics[width=15cm]{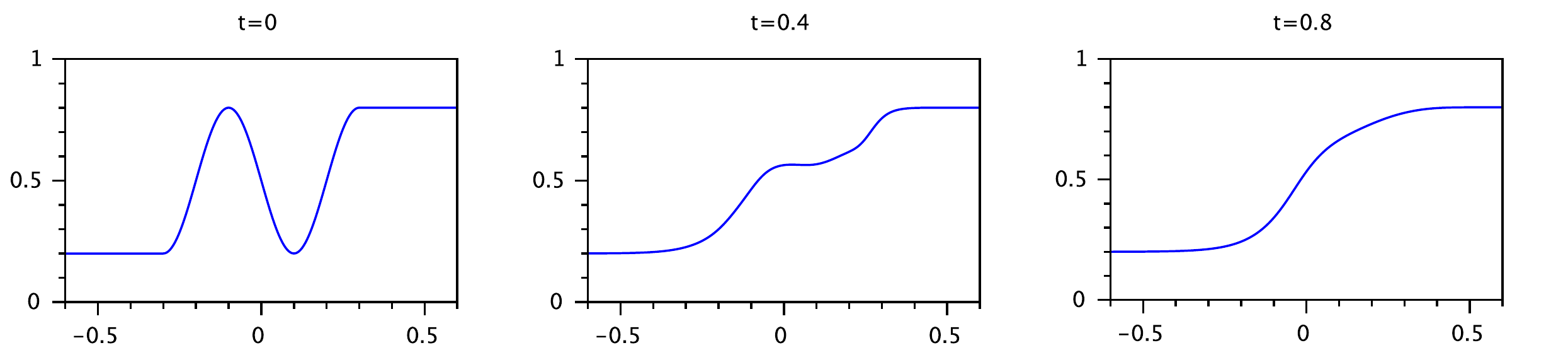}
\includegraphics[width=15cm]{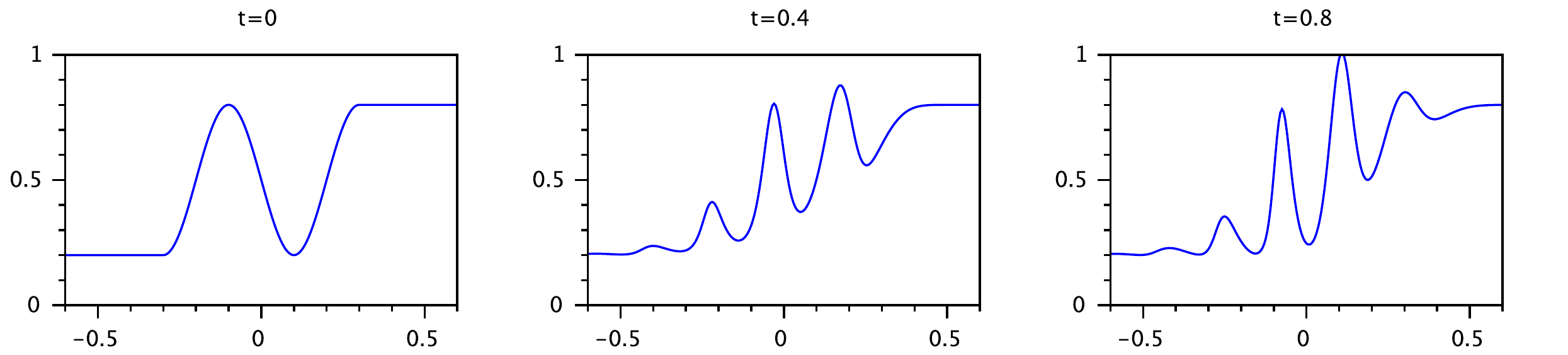}
\caption{Solution for the nonlocal conservation law~\eqref{eq:claw1} 
with oscillatory initial condition, at $t=0, 0.4, 0.8$.
Top: With $w(x)=\frac{2}{h}-\frac{2x}{h^2}$, i.e., $w'<0$,  the solution quickly 
approaches the traveling wave profile $Q(x)$ as $t$ grows.
Bottom: With $w(x)=\frac{2x}{h^2}$, i.e., $w'>0$,
the solution becomes more oscillatory as $t$ grows.}
\label{fig:4}
\end{center}
\end{figure}

\section{Micro-macro limit of traveling waves}
\label{sec:conv}
\setcounter{equation}{0}

\begin{theorem}[Micro-macro limit of traveling waves]
\label{th:convTW}
Fix a weight function $w(\cdot)$ that satisfies the assumption (A1). 
Let $\{\ell_n\}$ be a sequence of car length such that $\lim_{n\to\infty}\ell_n=0$. 
Let $P^{(n)}(\cdot)$ be the discrete stationary profile for the FtLs model,
with car length $\ell_n$, such that
\[
\lim_{x\to\pm\infty} P^{(n)}(x)=\rho^\pm,\qquad P^{(n)}(0)=\hat\rho.
\]
Then, as $n\to\infty$, the sequence of functions 
$P^{(n)}(\cdot)$ converges to a unique limit function
$Q(\cdot)$, where $Q(\cdot)$ is the stationary profile for 
the non-local conservation law~\eqref{eq:claw1},
satisfying the  conditions
\begin{equation}\label{eq:PlimQ}
\lim_{x\to\pm\infty} Q(x)=\rho^\pm,\qquad Q(0)=\hat\rho.
\end{equation}
\end{theorem}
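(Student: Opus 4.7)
The plan is to extract a convergent subsequence from $\{P^{(n)}\}$ via Helly's selection theorem, pass to the limit in the integral form of the profile equation provided by Lemma~\ref{lm:period}, identify the limit as the unique continuous profile $Q$ using Theorem~\ref{th2}, and then deduce that the full sequence converges because every subsequence yields the same limit.

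For the compactness step, Theorem~\ref{th2b} and Lemma~\ref{lm:mono} ensure that each $P^{(n)}$ is monotone increasing with range contained in $[\rho^-,\rho^+]\subset (0,1)$, and the normalization gives $P^{(n)}(0)=\hat\rho$ for every $n$. Helly's selection theorem produces a subsequence (still denoted $P^{(n)}$) converging pointwise on $\R$ to a monotone increasing function $Q:\R\to[\rho^-,\rho^+]$ with $Q(0)=\hat\rho$.

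The heart of the argument is to pass to the limit in the identity from Lemma~\ref{lm:period},
\[
\frac{P^{(n)}(x)}{\ell_n}\int_x^{x+\ell_n/P^{(n)}(x)}\frac{dz}{v(A^{P^{(n)}}(z))}=\frac{P^{(n)}(x)}{\bar f},
\]
where $\bar f=f(\rho^\pm)$. Two approximations must be controlled. First, the discrete average converges to the continuous one: writing $A^{P^{(n)}}(x)=\sum_k P^{(n)}(z_{i+k})\int_{z_{i+k}}^{z_{i+k+1}}w(y-x)\,dy$ and comparing term-by-term with $\int_x^{x+h}P^{(n)}(y)w(y-x)\,dy$, the monotonicity of $P^{(n)}$ and the bound $z_{k+1}-z_k\le\ell_n/\rho^-$ give, via a telescoping estimate, an error of order $\|w\|_\infty(\rho^+-\rho^-)\ell_n/\rho^-$; dominated convergence then yields $\int P^{(n)}w\to\int Qw=\mathcal{A}(Q;x)$. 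Second, the map $z\mapsto\int_0^h P^{(n)}(z+s)w(s)\,ds$ is uniformly Lipschitz in $z$ with constant $\|w\|_\infty(\rho^+-\rho^-)$ (a one-line consequence of monotonicity of $P^{(n)}$), so $A^{P^{(n)}}(z)$ deviates by $o(1)$ from $A^{P^{(n)}}(x)$ on the interval $[x,x+\ell_n/P^{(n)}(x)]$ as $n\to\infty$. Combining these facts, the shrinking-interval average of $1/v(A^{P^{(n)}}(z))$ converges to $1/v(\mathcal{A}(Q;x))$, and the limiting equation
\[
Q(x)\,v(\mathcal{A}(Q;x))=\bar f
\]
is precisely \eqref{eq:p1}.

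Finally, since $\mathcal{A}(Q;\cdot)$ is continuous as a bounded convolution and $v$ is continuous with $v>0$, the relation $Q=\bar f/v(\mathcal{A}(Q;\cdot))$ forces $Q$ to be continuous. Its monotonicity and boundedness give asymptotic limits $Q^\pm$; passing to the limit in the equation yields $f(Q^\pm)=\bar f$, and $Q^-\le\hat\rho\le Q^+$ combined with \eqref{rpm} forces $Q^\pm=\rho^\pm$. By the uniqueness part of Theorem~\ref{th2}, $Q$ is the unique stationary profile satisfying \eqref{eq:PlimQ}. Every subsequence of $\{P^{(n)}\}$ therefore has a further subsequence converging to this same $Q$, so the full sequence converges. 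The main obstacle is the two-scale passage to the limit: the sum-to-integral approximation for $A^{P^{(n)}}$ and the shrinking-interval average of $1/v(A^{P^{(n)}})$ must be controlled simultaneously, and both rely crucially on the uniform bounds from monotonicity of the discrete profiles.
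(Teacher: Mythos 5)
Your proposal is correct and its core is the same as the paper's: Helly compactness on the monotone, normalized profiles, passage to the limit in the integral (periodic) form of the profile equation from Lemma~\ref{lm:period}, and identification of the limit via the uniqueness part of Theorem~\ref{th2}. The one genuine divergence is in how the asymptotic values of the limit $Q$ are recovered. The paper devotes its entire Step~1 to proving $\lambda_\pm^{\ell}\to\lambda_\pm$ for the linearized decay rates, and uses this uniform tail control to upgrade locally uniform convergence to uniform convergence on all of $\R$, from which $\lim_{x\to\pm\infty}Q(x)=\rho^\pm$ follows. You instead exploit that every $P^{(n)}$ satisfies the periodic identity with the \emph{same} constant $\bar f=f(\rho^\pm)$, so the limit satisfies $Q(x)\,v(\mathcal A(Q;x))=\bar f$ exactly; letting $x\to\pm\infty$ gives $f(Q^\pm)=\bar f$, and since $f$ is strictly unimodal with the two roots $\rho^\mp$ of $f=\bar f$ straddling $\hat\rho=Q(0)$, monotonicity forces $Q^\pm=\rho^\pm$. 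This bypasses the rate computation entirely and is arguably cleaner; what the paper's Step~1 buys in exchange is the stronger conclusion of uniform convergence on the whole line and quantitative information on how the decay rates behave as $\ell\to0$. You are also more explicit than the paper on the two-scale limit (the telescoping sum-to-integral bound of order $\ell_n$ for $A^{P^{(n)}}$ versus $\mathcal A(P^{(n)};\cdot)$, and the uniform Lipschitz bound controlling the shrinking-interval average), and you supply the subsequence-to-full-sequence step that the paper leaves implicit; both of these estimates are correct as stated.
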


\begin{proof}
\textbf{Step 1.}  
Let $\lambda_+^\ell>0$ be the exponential rate for the profile $P(\cdot)$
as $x\to\infty$, derived in Lemma~\ref{lm:A}, where $\lambda_+^\ell$ is the unique
solution of~\eqref{eq:G22}. 
Let $\lambda_+$ be the exponential rate for the profile $Q(\cdot)$ derived 
in Lemma~\ref{lm:AL},
where $\lambda_+$ is the unique solution of~\eqref{eq:lambda}.
Recalling~\eqref{eq:G22}, we define the function
\begin{equation}\label{eq:H}
H(a,\lambda) 
\;\dot=\;  b \sum_{k=0}^{ m} 
\hat w_{k}   e^{-ka\lambda} 
-\frac{a\lambda}{ 1-e^{-a\lambda}}.
\end{equation}
Recall that $a=\ell/\rho^+$. 
Since the first term on the righthand side of~\eqref{eq:H} 
is an approximate Riemann sum of the integral 
$ \int_0^h e^{-\lambda s}w(s)\,ds$, where $a$ is the length of the sub-intervals,
we have 
\begin{eqnarray*}
H(0,\lambda) &\dot=& \lim_{a\to0}H(a,\lambda) \;= \;b\int_0^h e^{-\lambda s}w(s)\;ds -1 .
\end{eqnarray*}
By~\eqref{eq:G22} and~\eqref{eq:lambda}, we have 
\[
 H(0,\lambda_+)=0, \qquad H(a, \lambda_+^\ell)=0.
\]

For $0<\lambda \le \lambda^\sharp \,\dot=\, \max\{\lambda^\ell_+,\lambda_+\}$, we have 
\[
\left| \frac{\partial}{\partial a} H(a,\lambda) \right| = 
\left| 
- b \sum_{k=0}^{ m} \hat w_k k\lambda e^{-ka\lambda} 
- \frac{\lambda (1-e^{-a\lambda})-a\lambda^2e^{-a\lambda}}{(1-e^{-a\lambda})^2}
\right|
~\le~ M_1,
\]
and
\[
\frac{\partial}{\partial \lambda}  H(0,\lambda) =
\int_0^h -s e^{-\lambda s}w(s)\;ds  \le - e^{-\lambda h} \int_0^h s w(s)\; ds
\le - M_2 <0,
\]
so
\[
\abs{\frac{\partial}{\partial \lambda}  H(0,\lambda) } \ge M_2.
\]

Since $\lambda\mapsto  H(0,\lambda) $ is strictly decreasing, 
we have $H(0,\lambda_+^\ell) \not= 0$. 
Then, it holds
\[
\abs{\lambda_+-\lambda_+^\ell} = 
\abs{\frac{\lambda_+-\lambda_+^\ell}{H(0,\lambda_+)-H(0,\lambda_+^\ell)}}
 \cdot 
\abs{\frac{H(0,\lambda_+^\ell)-H(a,\lambda_+^\ell))}{a} }\cdot
a
\le \frac{\ell}{\rho^+} \cdot \frac{M_1}{M_2},
\]
therefore, we have 
\[
\lim_{\ell\to 0} \lambda_+^\ell = \lambda_+. 
\]
A completely similar proof yields 
\[
\lim_{\ell\to 0} \lambda_-^\ell = \lambda_-. 
\]

\bigskip

\textbf{Step 2.} Fix a small $\ell>0$, and let $P^\ell(\cdot)$ be a profile 
that satisfies~\eqref{eq:P} with asymptotic conditions 
$\lim_{x\to\pm\infty}P^\ell(x)=\rho^\pm$.
Then $P^\ell(\cdot)$ is monotone and Lipschitz continuous. 
Taking the limit $\ell\to 0$, by Helly's compactness theorem 
there exists a subsequence of $\{P^\ell\}$ 
that converges to a limit function $Q(\cdot)$
uniformly on bounded set. 
Thanks to the asymptotic conditions  
$\lim_{x\to\pm\infty} P^{\ell} (x)=\pm\infty$ for all $\ell >0$
and the result in step 1 on the exponential rates, 
the convergence $P^{\ell} \to Q$ is uniform for all $x\in\mathbb{R}$. 
Moreover, the limit function $Q$ is monotone, Lipschitz continuous, and satisfies
\[
Q(0)=\hat\rho, \quad 
\lim_{x\to-\infty} Q(x) = \rho^-, 
\quad 
\lim_{x\to\infty} Q(x) = \rho^+.
\]

It remains to show that $Q$ satisfies the integral equation~\eqref{eq:p1}. 
Indeed, recalling the definition of the operator $A^{P^\ell}$ in \eqref{eq:AP}, 
we write
\[
A^{P^\ell}(x) 
= \sum_{k=0}^\infty \int_{(L^{P^\ell})^k(x)}^{(L^{P^\ell})^{k+1}(x)} 
P^\ell\left((L^{P^\ell})^k(x)\right) w(y-x)\; dy .
\]
Since the convergence $P^\ell \to Q$ is uniform for all $x\in \mathbb{R}$, 
and the weight function $w$ is continuous on its support $x\in[0,h]$, we have
\begin{equation}\label{FF}
\lim_{\ell\to 0+} A^{P^\ell}(x)  =\int_{x}^{x+h} Q(y) w(y-x)\; dy= 
 \mathcal{A}(Q;x) \qquad \forall x\in\mathbb{R}.
\end{equation}

By the periodic property, $P^\ell $ satisfies the integral equation
\[
\int_x^{x+\ell/P^\ell(x)} \frac{1}{v(A^{P^\ell}(z))}\; dz = \frac{\ell}{\bar f} \qquad \forall x\in\mathbb{R}.
\]
This can be written as 
\begin{equation}\label{FF2}
\frac{1}{\ell/P^\ell(x)} \int_x^{x+\ell/P^\ell(x)} \frac{1}{v(A^{P^\ell}(z))}\; dz = \frac{P^\ell(x)}{\bar f} \qquad \forall x\in\mathbb{R},
\end{equation}
where the left-hand side is the average value of $v(A^{P^\ell}(z))^{-1}$ 
over the interval $[x, x+ \ell/P^\ell(x)]$. 

Taking the limit $\ell\to 0+$ on both sides of~\eqref{FF2}, we obtain
\[
\frac{1}{v(\mathcal{A}(Q;x))} = \frac{Q(x)}{\bar f}\qquad \implies \quad 
Q(x) v(\mathcal{A}(Q;x)) = \bar f, \qquad 
\forall x\in\mathbb{R}.
\]
Thus, we conclude that $Q$ satisfies \eqref{eq:p1}, completing the proof. 
\end{proof}

\section{Further Discussions}
\setcounter{equation}{0}
\label{sec:insta}

\subsection{Traveling waves with non-zero speed}
So far in this paper we considered stationary profiles. 
In the case where a traveling wave has non-zero speed, 
a coordinate translate can be used. 
Let $\sigma$ be the velocity of the travelling waves. 
Assume that the function $f(\rho)=\rho v(\rho)$ is convex with $f'' <0$. 
Let $\rho^-\in\mathbb{R},\rho^+\in\mathbb{R}$ satisfy
\[
0<\rho^-<\hat\rho_\sigma < \rho^+<1, \qquad 
\sigma =  \frac{f(\rho^-)-f(\rho^+)}{\rho^--\rho^+},\qquad 
\mbox{where}~ f'(\hat\rho_\sigma ) = \sigma.
\]

We consider the nonlocal conservation law~\eqref{eq:claw1}. 
Let $\xi=x-\sigma t$. We seek profile $Q(\cdot)$ such that 
$\rho(t,x)=Q(\xi)=Q(x-\sigma t)$ 
is a solution for~\eqref{eq:claw1}. 
The profile $Q(\cdot)$ satisfies the ODE
\[
-\sigma Q_\xi + (Q \cdot v(\mathcal{A}(Q;\xi)))_\xi =0,
\]
where $\mathcal{A}$ is the averaging operator defined in~\eqref{eq:AQ}. 
This leads to the integro-equation 
\begin{equation}\label{Q-sigma}
Q(\xi) \left[v(\mathcal{A}(Q;\xi)) - \sigma  \right] \equiv \bar f_\sigma,
\qquad \mbox{where} \quad 
\bar f_\sigma = f(\rho^-) - \sigma \rho^- = f(\rho^+) - \sigma \rho^+.
\end{equation}
The analysis in Section~\ref{sec:NLCL} can be applied to~\eqref{Q-sigma} in a similar way,
achieving similar results.

\medskip

On the other hand, 
for the FtLs model we seek a profile $P(\cdot)$ such that 
\[  P(z_i(t)-\sigma t) = \rho_i(t).\]
Differentiating this in $t$, and after direct computation we arrive at
\begin{equation}\label{P-sigma}
P'(\xi) = \frac{P^2(\xi)}{ \ell \left[ v(A^P(\xi)) -\sigma\right]} \cdot
\Big[  v(A^P(L^P(\xi))) -  v(A^P(\xi))\Big].
\end{equation}
Here $L^P$ is the operator in~\eqref{eq:LP} and $A^P$ is defined in~\eqref{eq:AP}. 
Note the similarity between~\eqref{P-sigma} and~\eqref{eq:P}. 
Again, similar results are achieved by applying the same approach as 
in Section~\ref{sec:NLF}. 

\bigskip

\subsection{Unstable profiles with $w'>0$} 
 
We  discuss a case where the traveling wave profiles  are not local attractors
for the solution of the Cauchy problem for the nonlocal conservation law. 
Assume that the weight function $w(\cdot)$ satisfies 
\[ w'(x) >0, \qquad x\in[0,h]. \]
This implies that, on the interval $[0,h]$ ahead of a driver, 
the situation further ahead is more important than the one closer to the driver.
Of course, from a practical point of view, this assumption is rather obscure, and
we expect  the mathematical models to exhibit erroneous behavior.
Indeed, as we have observed in the numerical simulations (see Figure~\ref{fig:4}), 
when $w'(x)>0$ on $x\in(0,h)$, the solution of the Cauchy problem for~\eqref{eq:claw1}
does not approach any profile $Q(\cdot)$ as $t$ grows. 
Here we offer a supplementary argument. 

To fix ideas, assume that $w(0)=0$ and $w'(h) \ge c_o >0$ on $(0,h)$. 
We revisit the proof of Theorem~\ref{th3} and observe that~\eqref{eq:stabest} and~\eqref{eq:s3} give
\begin{eqnarray*}
\rho_t(t,\hat x) &=& - [\rho(t,\hat x) v(\cA(\rho;t,\hat x))]_x \\
&=& - Q'(\hat x) [v(\mathcal{A}(\rho))-v(\mathcal{A}(Q))] - Q v'(\mathcal{A}(\rho)) [
\mathcal{A}(\rho)_x-\mathcal{A}(Q)_x]\\
&& \qquad  -
Q \mathcal{A}(Q)_x [v'(\mathcal{A}(\rho))-v'(\mathcal{A}(Q))] \\
&=& \int_0^h \left[ Q'(\hat x) v'(p_1) w(s) + Q(\hat x) A(Q;\hat x)_x v''(p_2) w(s) -Q(\hat x) v'(\cA(\rho;t,\hat x)) w'(s)\right] \\
&& \qquad 
\cdot \left[ Q(\hat x+s) - \rho(t,\hat x+s) \right] \, ds,
\end{eqnarray*}
where
\[
v'(p_1) \;\dot=\; \frac{v(\cA(\rho))-v(\cA(Q))}{\cA(\rho)-\cA(Q)},
\qquad 
v''(p_2)  \;\dot=\; \frac{v'(\cA(\rho))-v'(\cA(Q))}{\cA(\rho)-\cA(Q)}.
\]
Since the profile $Q(x)$ approaches $\rho^+$ as $x\to +\infty$, therefore, for 
$\hat x$ sufficiently large, $Q'(\hat x)$ and $A(Q;\hat x)_x$ become arbitrarily small, 
and we have 
\[ \rho_t(t,\hat x) >0 .\]
This shows that the solution $\rho(t,x)$ can never settle around $Q(x)$ for $x$ large,
therefore the profiles $Q$ are not time asymptotic limits
in the sense of Theorem~\ref{th3}.

\subsection{A symmetric kernel $w$}

We now consider the case when
a driver consider the situation both in front and behind the car. 
Although in reality one only adjusts  the speed according to the leaders, 
there has been an interest in nonlocal models where the 
integral kernel has support both in front and behind particle.
This leads to the non-local conservation law
\begin{equation}\label{eq:clawB}
\rho_t + \left[ \rho(t,x) \cdot v\left(\int_{x-{h_1}}^{x+{h_2}} \rho(t,y) w(y-x) \; dy\right)\right]_x =0,
\end{equation}
where the weight function $w(\cdot)$ has support on $[-h_1,h_2]$.

One motivation for~\eqref{eq:clawB} stems from possible extensions of 
the one-dimensional conservation law into  several space
dimensions, for applications such as pedestrian flow and flock flow. 
A radially symmetric kernel is commonly used, i.e.~$w(\vec{x})=w(r)$ where $r=|\vec{x}|$. 
Then, the corresponding one-dimensional kernel $w(x)$ is necessarily 
an even function. 
We now consider~\eqref{eq:clawB} with $h_1=h_2=h$ and 
a weight function $w(\cdot)$ that satisfies
\begin{equation}\label{eq:ce1}
w(s) =0  \quad \forall |s| \ge h, 
\qquad  w(-s)=w(s) \quad \forall s, \qquad \int_0^h w(s)\, ds=0.5.
\end{equation}

We seek stationary smooth and monotone profiles $\tilde Q(\cdot)$  such that
\begin{equation}\label{eq:ce2}
\lim_{x\to\pm\infty} \tilde Q(x) = \rho^\pm, \qquad f(\rho^-)=f(\rho^+)=\bar f.
\end{equation}
The profile $\tilde Q(\cdot)$ satisfies the integral equation
\begin{equation}\label{eq:ce3}
\tilde Q(x) \cdot v\left(\int_{x-h}^{x+h} \tilde Q(y) w(y-x) \; dy\right) \equiv \bar f .
\end{equation}
Carrying out a similar asymptotic analysis as in Lemma~\ref{lm:AL}, 
for the limit $x\to\infty$, 
equation~\eqref{eq:lambda} is modified to 
\begin{equation}\label{eq:lambdaN}
\int_{-h}^h e^{-\lambda s} w(s) \; ds = \frac{1}{\beta}.
\end{equation}
We seek positive roots  $\lambda>0$. By using $w(-s)=w(s)$, we get
\begin{eqnarray*}
\frac{1}{\beta} &=& \int_{-h}^0 e^{-\lambda s} w(s) \; ds + \int_{0}^h e^{-\lambda s} w(s) \; ds
~=~ \int_0^h \left[ e^{-\lambda s} + e^{\lambda s} \right] w(s)\; ds \\
&=& \int_0^h  2 \cosh(\lambda s) w(s)\; ds  
~>~ 2 \int_0^h   w(s)\; ds 
~=~ 1.
\end{eqnarray*}
Therefore, a positive root $\lambda_+$ exists if and only if $\beta<1$, i.e., $\rho^+ < \hat \rho$.
Thus, as $x\to +\infty$, the profile 
$\tilde Q$ converges to $\rho^+$
exponentially if and only if 
$\rho^+ < \hat \rho$. 
A completely similar argument shows that 
as $x\to -\infty$, the profile 
$\tilde Q$ converges to $\rho^-$
exponentially if and only if 
$\rho^- > \hat \rho$. 

The existence of solutions for~\eqref{eq:ce3} is not obvious, since the 
technique used in Section~\ref{sec:NLCL}
 can not be applied. 
However, if monotone and continuous profiles  should exist, 
then the above argument indicates that 
$\rho^+<\hat \rho<\rho^-$ (instead of  $\rho^+>\hat \rho>\rho^-$ as in 
Lemma~\ref{lm:AL}). 
In the limit as $\ell \to 0+$, these profiles converge 
to a downward jump with $\rho^- > \rho^+$.

\section{Another non-local model: averaging the velocity}
\label{sec:model2}
\setcounter{equation}{0}

In this section we consider the conservation law~\eqref{eq:claw2}
and the corresponding FtLs model~\eqref{FtLs2}. 
Note that~\eqref{FtLs2} now  gives
\begin{align}\label{rhodot2}
    \dot \rho_i(t) 
=-\frac{\ell \left( \dot z_{i+1} - \dot z_i \right) }{(z_{i+1}-z_i)^2} 
= \frac{1}{\ell} \rho_i^2 \cdot \big( v_i^*(\rho;t)- v^*_{i+1}(\rho;t)\big),
\qquad i\in\mathbb{Z}.
\end{align}

The same results on stationary profiles as in Sections~\ref{sec:NLCL} and~\ref{sec:conv} 
apply to these models.
The proofs are very similar, with only mild modifications. 
Below we go through the analysis briefly,  focusing mainly on  the differences. 

\subsection{The FtLs model}

We seek  ``discrete stationary traveling wave profiles'' $\cP(\cdot)$ such that 
\begin{equation}\label{cP1}
\cP(z_i(t)) = \rho_i(t), \qquad \forall t\ge 0, \quad \forall i\in\mathbb{Z}.
\end{equation}
We define the  operators
\begin{equation}\label{eq:LcP}
L^\cP(x) \;\dot=\; x + \frac{\ell}{\cP(x)}, \qquad
A(v(\cP(z_i))) \;\dot=\; \sum_{k=0}^m w_{i,k} v(\cP((L^\cP)^k(z_i) )) .
\end{equation}

After a similar derivation, we find that $\cP(x)$ satisfies 
a delay differential equation,
\begin{equation}\label{eq:cP}
\cP'(x) = - \frac{\cP^2(x)}{\ell \cdot A(v(\cP(x))) } \Big[ A(v(\cP(L^\cP(x)))) - A(v(\cP(x)))\Big].
\end{equation}

The asymptotic limits at $x\to\pm\infty$, 
the periodic behavior, the existence and uniqueness of solutions of the initial value problems,  
and the existence and uniqueness of two-point asymptotic value problem 
all follow in almost the same way as those in Section~\ref{sec:NLCL}. 

\medskip

\textbf{Stability.} The analysis for the stability of the profiles is slightly different. 
In the same setting as in the proof of Theorem~\ref{th3b}, we let $k$ be the index such that
\[
\hat \cP(z_k(t)) =\rho_k(t), \quad \hat \cP(z_i(t)) > \rho_i(t) \quad \forall i >k,
\qquad \mbox{and}\quad
\hat \cP(z_i(t)) \ge \rho_i(t) \quad \forall i\in\mathbb{Z} ,
\]
and claim that 
\begin{equation}\label{eq:ww2}
\frac{\dot\rho_k}{\dot z_k}  < \hat \cP'(z_k). 
\end{equation}

Indeed, using $L^{\hat \cP}(z_k)=z_{k+1}$, we have
\begin{eqnarray*}
\hat \cP'(z_k) &=&
 \frac{\hat \cP(z_k)}{A(v(\hat \cP(z_k))} \cdot 
\frac{A(v(\hat \cP(z_{k+1})))-A(v(\hat \cP(z_k)))}{v(\hat \cP(z_{k+1})) -v(\rho_k) }
\cdot
\frac{v(\hat \cP(z_{k+1})) -v(\rho_k) }{z_k - z_{k+1}} 
\;\dot=\; A_1 B_1 C_1, \\
\frac{\dot\rho_k}{\dot z_k} &=& \frac{\rho_k}{v^*_k} \cdot 
\frac{v^*_{k+1}-v^*_k}{ v(\rho_{k+1}) - v(\rho_k)} \cdot
\frac{ v(\rho_{k+1}) - v(\rho_k)}{z_k- z_{k+1}} 
\;\dot=\; A_2 B_2 C_2.
\end{eqnarray*}
Then, using $v'<0$, we have
\begin{eqnarray*} 
A(v(\hat \cP(z_k))) < v^*_k \quad &\implies& \quad A_2 < A_1,  \\
\hat \cP(L^{\hat \cP}(z_k)) > \rho_{k+1} \quad &\implies& \quad
v(\hat \cP(L^{\hat \cP}(z_k))) < v(\rho_{k+1} ) \quad \implies \quad   C_2 < C_1.
\end{eqnarray*}

It remains to show that $B_2\le B_1$.  Indeed, we observe that 
\[
v(\rho_{k+1}) - v(\rho_k)  > v(\hat \cP(z_{k+1})) -v(\rho_k)  .
\]
Now  
let $V_1(\cdot), V_2(\cdot) $ denote the piecewise constant functions defined as
\begin{eqnarray*}
V_1(x) \;\dot=\; v(\hat \cP((L^{\hat \cP})^j(z_k))), \qquad &&\mbox{for}\quad 
(L^{\hat \cP})^j(z_k) ~<~ x ~<~ (L^{\hat \cP})^{j+1}(z_k), \\
V_2(x) \;\dot=\; \rho_{k+j}, \qquad &&\mbox{for}\quad 
z_{k+j} ~<~ x ~<~ z_{k+j+1}.
\end{eqnarray*}
In this  setting we have 
\[
V_1(x)=V_2(x), \quad   x\in(z_k,z_{k+1}); \qquad 
\mbox{and}\quad 
V_1(x) < V_2(x) ,  \quad  x > z_{k+1}.
\]
Now, we can write
\begin{eqnarray*}
&& \hspace{-2.6cm} A(v(\hat \cP(z_{k+1})))-A(v(\hat \cP(z_k))) \\
&=& -\rho_k \int_{z_k}^{z_{k+1}} w(y-z_k)\; dy
+ \int_{z_{k+1}}^\infty (w(y-z_{k+1})-w(y-z_k)) V_1(y)\; dy,\\
v^*_{k+1}-v^*_k &=& -\rho_k \int_{z_k}^{z_{k+1}} w(y-z_k)\; dy + 
\int_{z_{k+1}}^\infty (w(y-z_{k+1})-w(y-z_k)) V_2(y)\; dy.
\end{eqnarray*}
Now, since $w'\le 0$, we have $w(y-z_{k+1})-w(y-z_k)\ge 0$.
We  conclude
\[
v^*_{k+1}-v^*_k \le  A(v(\hat \cP(z_{k+1})))-A(v(\hat \cP(z_k))). 
\]
This implies $B_2\le B_1$, and therefore  proves~\eqref{eq:ww2}. 

\begin{remark}
Note that in the above proof  we do not use the assumption $v''\le 0$. 
\end{remark}

\subsection{The nonlocal conservation law}

Let $\cQ(x)$ denote  the stationary wave profile  for~\eqref{eq:claw2}.
Introduce the operator $\mathcal{A}$ such that 
\begin{eqnarray*}
\mathcal{A}(v(\rho);t,x) 
&\dot=&
\int_x^{x+h} v(\rho(t,y)) w(y-x) \; dy~ =~ \int_0^h v(\rho(t,x+s)) w(s)\; ds,\\
\mathcal{A}(v(\cQ);x) 
&\dot=& \int_x^{x+h} v(\cQ(y)) \,w(y-x) \; dy
~=~\int_0^h v(\cQ(x+s)) w(s) \; ds.
\end{eqnarray*}
A stationary solution $\cQ(\cdot)$ for~\eqref{eq:claw2}
 satisfies the equation
\begin{equation}\label{eq:cQ1}
\cQ(x) \cdot \mathcal{A}(v(\cQ);x) 
\equiv \bar f = \mbox{constant}= f(\rho^\pm).
\end{equation}
This can also be written in the form of a delay integro-differential equation,
\[
\cQ'(x) = -\frac{\cQ(x)}{\mathcal{A}(v(\cQ);x) } \cdot
\int_0^h v'(\cQ(x+s)) \cQ'(x+s) w(s) \; ds.
\]

The asymptotic limits are analyzed in the same way as for Section~\ref{sec:NLCL}. 

\medskip

\textbf{Approximate solutions for initial value problem.}
The approximate solutions for the initial value problem are generated using  a similar 
algorithm  as in Step 1 of the proof for Theorem~\ref{th1}, with a few different details. 
Fix a $k\in\mathbb{Z}$, let 
 $\cQ_i$ for $i\ge k$ be given.  We compute $\cQ_{k-1}$ by 
solving the nonlinear equation
\[
G(\cQ_{k-1}) \;\dot=\; \cQ_{k-1} \cA(v(\cQ); x_{k-1})- \bar f =0, 
\]
where on $x\in[x_{k-1},x_k]$ the function $\cQ(x)$ is reconstructed by linear interpolation. 
The above nonlinear equation has a unique zero, if it is monotone. 
We claim that, for $\dx$ sufficiently small,  
\[
\partial_{\cQ_{k-1}} G(\cQ) = \cA(v(\cQ); x_{k-1}) + \cQ_{k-1} \partial_{\cQ_{k-1}} \cA(v(\cQ); x_{k-1})>0.
\]
Indeed, we have
\begin{eqnarray*}
\partial_{\cQ_{k-1}} \cA(v(\cQ); x_{k-1})
&=& \int_{x_{k-1}}^{x_k} \partial_{\cQ_{k-1}}
v\left(\cQ_{k-1}\frac{x_k - x}{x_k-x_{k-1}}    +\cQ_k \frac{x-x_{k-1}}{x_k-x_{k-1}} \right) 
w(y-x)\; dy\\
&=&  -\O(1) \cdot  \int_{x_{k-1}}^{x_k}
 \frac{x_k - x}{x_k-x_{k-1}}  w(y-x)\; dy  ~=~ -\O(1) \cdot \Delta x,
\end{eqnarray*}
proving the claim. 

The existence and uniqueness of the initial value problem and the two-point asymptotic value problem
follow in a very similar way as those in Section~4. 

\medskip

\textbf{Stability.} 
The existence and uniqueness of entropy weak solutions for the Cauchy problem 
of~\eqref{eq:claw2} is recently established in~\cite{FKG2018}. 
Fix a time $t\ge 0$. Similar to the proof of Theorem~\ref{th3}, 
let $\hat \cQ(x)$ be a profile such that $\hat \cQ(x) \ge \rho(t,x)$,
and let $\hat x$ be a point satisfying
\begin{equation}\label{eq:sQ}
\hat \cQ(\hat x) = \rho(t,\hat x), 
\qquad
\hat \cQ_x(\hat x) = \rho_x(t,\hat x), 
\qquad\hat  \cQ(x) > \rho(t,x) \qquad \forall x>\hat x.
\end{equation}

We claim that,
\begin{equation}\label{eq:s12}
\rho_t(t,\hat x) <0, 
\qquad \mbox{i.e.}\quad 
[\rho(t,\hat x) \cA(v(\rho); t,\hat x)]_x >0. 
\end{equation}

Indeed, we compute
\begin{eqnarray}
&&\hspace{-1cm} [\rho(t,\hat x) \cA(v(\rho); t,\hat x)]_x ~=~
[\rho(t,\hat x) \cA(v(\rho); t,\hat x)]_x - \left[\hat \cQ(\hat x) \cA(v(\hat \cQ);\hat x)\right]_x \nonumber \\
&=& \hat \cQ_x \left[ \cA(v(\rho); t,\hat x) -\cA(v(\hat \cQ);\hat x) \right] +
\hat \cQ(\hat x) \left[ \cA(v(\rho); t,\hat x)_x - \cA(v(\hat \cQ);\hat x)_x \right].
\label{eq:EE}
\end{eqnarray}

Since $\hat \cQ(x)$ is monotone increasing, we have $\hat \cQ_x >0$. And, 
by~\eqref{eq:sQ} we have
\[
v(\rho; t,\hat x) > v(\hat \cQ;\hat x) \quad (x>\hat x), \qquad \mbox{so}\quad
\cA(v(\rho); t,\hat x) >\cA(v(\hat \cQ);\hat x).
\]
Thus, the first term on the righthand side of~\eqref{eq:EE} is positive. 
To estimate the second term,  using 
\begin{eqnarray*}
\cA(v(\rho); t,\hat x)_x &=& 
v(\rho(t,\hat x+h))w(h) - v(\rho(t,\hat x))w(0) -\int_0^h v(\rho(t,\hat x+s)) w'(s)\; ds, \\
\cA(v(\hat \cQ);\hat x)_x &=& v(\hat \cQ(\hat x+h))w(h) - v(\hat \cQ(\hat x))w(0) -\int_0^h v(\hat \cQ(\hat x+s)) w'(s)\; ds ,
\end{eqnarray*}
and that $w'\le 0, v' <0$, we get
\[
\cA(v(\rho); t,\hat x)_x - \cA(v(\hat \cQ);\hat x)_x  >0,
\]
proving the claim.  

\begin{remark}
Note again that we do not need the assumption $v''\le 0$ in the above proof.
\end{remark}

Finally,  as $\ell\to 0$,
the profiles $\cP^\ell(\cdot)$ converges to $\cQ(\cdot)$, following the same argument as in the proof of 
Theorem~\ref{th:convTW}. We omit the details.

\section{Concluding remarks}
\label{sec:cr}
\setcounter{equation}{0}

In this paper we analyze existence, uniqueness and stability of stationary traveling wave profiles for several non-local 
models for traffic flow, for both particle models and PDE models. 
Furthermore, we prove the convergence of the traveling waves of the FtLs models
to those of the corresponding non-local conservation laws. 
However, 
the convergence of solutions of the non-local microscopic model to the macroscopic model
remains open.
We recall that, for the local models, the micro-macro limits are well treated in the literature, 
see~\cite{MR3217759, MR3356989, HoldenRisebro, HoldenRisebro2}.
Existence of solutions for the Cauchy problem of the non-local conservation laws 
is also well studied, cf.~\cite{BG2016}.
We speculate that an adaptation of the approach in~\cite{HoldenRisebro}
combined with the results in~\cite{BG2016}
could yield the micro-macro limit. Details may come in a future work. 

It is also interesting to study stationary profiles for the case where the road condition
is discontinuous, for example where the speed limit has a jump at $x=0$. 
Preliminary results in~\cite{ShenTR} show that the profiles for the models~\eqref{eq:claw1}
and~\eqref{eq:claw2}  are very different. 
In both cases, some profiles are non-monotone, some are non-unique, and some are also unstable, (similar to the results in~\cite{ShenDDDE2017}), 
portraying a much more complex picture.

\bigskip

Codes for the numerical simulations used in this paper can be found: 

\verb+http://www.personal.psu.edu/wxs27/TrafficNL/+

\bigskip

\paragraph{Acknowledgement.} We thank the anonymous reviewers for their 
careful readings  and useful comments which led to an improvement of our manuscript.


\begin{thebibliography}{99}

\bibitem{AggarwalColomboGoatin2015}
  \newblock A. Aggarwal, R. M. Colombo, P. Goatin,
  \newblock Nonlocal systems of conservation laws in several space dimensions,
  \newblock \emph{SIAM J. Numer. Anal.}, \textbf{53} {(2015)}, 963--983.
  

\bibitem{AggarwalGoatin2016}
  \newblock A. Aggarwal, P. Goatin,
  \newblock Crowd dynamics through non-local conservation laws,
  \newblock \emph{B. Braz. Math. Soc.}, \textbf{47} {(2016)}, 37--50.


\bibitem{AmorimColomboTeixeira2015}
  \newblock P. Amorim, R. M. Colombo, A. Teixeira,
  \newblock On the numerical integration of scalar nonlocal conservation laws,
  \newblock \emph{EASIM: M2MAN}, \textbf{49} {(2015)}, 19--37.
  
\bibitem{AmadoriHaPark2017}
  \newblock D. Amadori, S.Y. Ha, J. Park,
  \newblock On the global well-posedness of BV weak solutions to the Kuramoto--Sakaguchi equation,
  \newblock \emph{J. Differential Equations}, \textbf{262} {(2017)}, 978--1022.

\bibitem{AmadoriShen2012}
  \newblock D. Amadori, W. Shen,
  \newblock Front tracking approximations for slow erosion,
  \newblock \emph{Dicrete Contin. Dyn. Syst.}, \textbf{32} {(2012)}, 1481--1502.

  
\bibitem{MR1952890} 
     \newblock  B. Argall, E. Cheleshkin,  J. M. Greenberg, C. Hinde, P. Lin,     
     \newblock A rigorous treatment of a follow-the-leader traffic model with
   traffic lights present,
     \newblock \emph{SIAM J. Appl. Math.}, \textbf{63} (2002), 149--168.

\bibitem{MR2727134} 
     \newblock  J. Aubin, 
      \newblock Macroscopic traffic models: shifting from densities to
   ``celerities'',
     \newblock \emph{Appl. Math. Comput}, \textbf{217} (2010), 963--971.


\bibitem{BetancourtBuergerKarlsenTory2011}
  \newblock F. Betancourt, R. B\"{u}rger, K. H. Karlsen, E. M. Tory,
  \newblock On nonlocal conservation laws modelling sedimentation,
  \newblock \emph{Nonlinearity}, \textbf{27} {(2011)}, 855--885.
  
  
\bibitem{BG2016}
  \newblock S. Blandin and P. Goatin,
  \newblock Well-posedness of a conservation law with non-local flux arising in traffic flow modeling.
 \newblock \textit{Numer. Math.} (2016) 132:217--241.

\bibitem{ChenChristoforou2007}
  \newblock G.-Q. Chen, C. Christoforou,
  \newblock Solutions for a nonlocal conservation law with fading memory,
  \newblock \textit{Proc. Amer. Math. Soc.} \textbf{135} (2007), 3905--3915. 
 
\bibitem{ColomboLecureuxMercier2011}
  \newblock R. M. Colombo, M. L\'{e}cureux-Mercier,
  \newblock Nonlocal crowd dynamics models for several populations,
  \newblock \emph{Acta Math. Sci.},  \textbf{32} {(2012)}, 177--196.

\bibitem{ColomboGaravelloLecureuxMercier2011}
  \newblock R. M. Colombo, M. Garavello, M. L\'{e}cureux-Mercier,
  \newblock Non-local crowd dynamics,
  \newblock \emph{C. R. Acad. Sci. Paris, Ser. I}, \textbf{349} {(2011)}, 769--772.

\bibitem{ColomboGaravelloLecureuxMercier2012}
  \newblock R. M. Colombo, M. Garavello, M. L\'{e}cureux-Mercier,
  \newblock A class of nonlocal models for pedestrian traffic,
  \newblock \emph{Math. Models Methods Appl. Sci. }, \textbf{22} {(2012)}.

\bibitem{ColomboMarcelliniRossi2016}
  \newblock R. M. Colombo, F. Marcellini, E. Rossi,
  \newblock Biological and industrial models motivating nonlocal conservation laws: A review of analytic and numerical results,
  \newblock \emph{Netw. Heterog. Media}, \textbf{11} {(2016)}, 49--67.

  
\bibitem{MR3217759} 
     \newblock    R. M. Colombo, E. Rossi,  
     \newblock On the micro-macro limit in traffic flow,
     \newblock \emph{Rend. Semin. Mat. Univ. Padova}, \textbf{131} (2014), 217--235.
 
\bibitem{CrippaLecureuxMercier2013}
  \newblock G. Crippa, M. L\'{e}cureux-Mercier,
  \newblock Existence and uniqueness of measure solutions for a system of continuity equations with non-local flow,
  \newblock \emph{NoDEA Nonlinear Differential Equations Appl.}, \textbf{20} {(2013)}, 523--537.


\bibitem{MR3672998}
\newblock De Filippis, Cristiana(F-INRIA17); Goatin, Paola(F-INRIA17)
\newblock The initial-boundary value problem for general non-local scalar conservation laws in one space dimension. (English summary) 
\newblock \emph{Nonlinear Anal.} \textbf{161} (2017), 131--156. 

 
\bibitem{MR3356989} 
     \newblock  M.  Di Francesco, M. D. Rosini,
     \newblock Rigorous derivation of nonlinear scalar conservation laws from
   follow-the-leader type models via many particle limit,
     \newblock \emph{Arch. Ration. Mech. Anal.}, \textbf{217} (2015), 831--871.




\bibitem{MR0477368} 
     \newblock R. D. Driver,
     \newblock \emph{Ordinary and delay differential equations},
     \newblock \emph{Applied Mathematical Sciences}, \textbf{20} 
     \newblock Springer-Verlag, New York-Heidelberg, (1977).


\bibitem{MR0141863} 
     \newblock  R. D.  Driver, M. D. Rosini, 
     \newblock Existence and stability of solutions of a delay-differential
   system,
     \newblock \emph{Arch. Rational Mech. Anal.}, \textbf{10} (1962), 401--426.




\bibitem{DuKammLehoucqParks2012}
  \newblock Q. Du, J. R. Kamm, R. B. Lehoucq, M. L. Parks,
    \newblock A new approach for a nonlocal, nonlinear conservation law,
  \newblock \emph{SIAM J. Appl. Math.}, \textbf{72} {(2012)}, 464--487.
  
\bibitem{FKG2018} 
\newblock    J. Friedrich, O. Kolb, S. G\"{o}ttlich, 
\newblock A Godunov type scheme for a class of LWR traffic flow models with non-local flux, \newblock \emph{Networks and Heterogeneous Media}, Vol. \textbf{13} (2018), pp. 531--547.

\bibitem{MR3605557} 
     \newblock  P. Goatin, F. Rossi, 
      \newblock A traffic flow model with non-smooth metric interaction:
   well-posedness and micro-macro limit,
     \newblock \emph{Commun. Math. Sci.}, \textbf{15} (2017), 261--287.




\bibitem{MR3461737} 
 \newblock P.~Goatin, Paola and S.~Scialanga.
\newblock Well-posedness and finite volume approximations of the LWR traffic flow model with non-local velocity. 
\newblock \emph{Netw. Heterog. Media} \textbf{11} (2016), no. 1, 107--121. 



\bibitem{HoldenRisebro} 
     \newblock  H. Holden, N. H. Risebro,
     \newblock Continuum limit of Follow-the-Leader models -- a short proof,
     \newblock \emph{To appear in DCDS}, preprint 2017. -- update!!! 

\bibitem{HoldenRisebro2}
  \newblock  H. Holden,  N. H. Risebro,  
  \newblock Follow-the-Leader models can be viewed as a numerical approximation to the Lighthill-Whitham-Richards model for traffic flow, preprint 2017. -- update!!! 



\bibitem{MR3670045}  
 \newblock Keimer, Alexander; Pflug, Lukas. 
\newblock Existence, Uniqueness and Regularity Results on Nonlocal Balance Laws. 
\newblock \emph{J. Differential Equations}  \textbf{263} (2017), no. 7, 4023--4069. 



\bibitem{MR381810}
\newblock A.~Keimer and L.~Pflug and M.~Spinola.
\newblock Existence, uniqueness and regularity of multi-dimensional nonlocal balance laws with damping. 
\newblock \emph{J. Math. Anal. Appl.} \textbf{466} (2018), no. 1, 18--55.


\bibitem{MR2860409} 
\newblock Dong Li  and  Tong Li.
\newblock Shock formation in a traffic flow model with Arrhenius look-ahead dynamics.  
\newblock \emph{Netw. Heterog. Media}  \textbf{6} (2011), no. 4, 681--694. 


\bibitem{MR0072606} 
     \newblock  M. J.  Lighthill, G. B.  Whitham,  
     \newblock  On kinematic waves. II. A theory of traffic flow on long crowded
   roads,
     \newblock \emph{Proc. Roy. Soc. London. Ser. A.}, \textbf{229} (1955), 317--345.


\bibitem{MR0492671} 
\newblock Robert H.~Martin, Jr..
\newblock Nonlinear Operators and Differential Equations in Banach Spaces. 
\newblock \emph{Pure and Applied Mathematics.}  Wiley, New York, 1976.  


\bibitem{MR3177735} 
     \newblock  E.  Rossi,
     \newblock A justification of a LWR model based on a follow the leader
   description,
     \newblock \emph{Discrete Contin. Dyn. Syst. Ser. S}, \textbf{7} (2014), 579--591.


\bibitem{MR1873467}  
\newblock George Sell and Yuncheng You.
\newblock Dynamics of Evolutionary Equations. 
\newblock \emph{Applied Mathematical Sciences}, \textbf{143}, Springer-Verlag, New York, 2002. 

	
\bibitem{ShenDDDE2017}
\newblock W. Shen,
\newblock Traveling wave profiles for a Follow-the-Leader model for traffic flow  with rough road condition, 
\newblock  \textit{Netw. Heterog. Media},  Volume 13, Number 3, September (2018), 449--478.  

\bibitem{ShenTR}
\newblock W. Shen,
\newblock Nonlocal PDE Models for Traffic Flow on Rough Roads,
\newblock In preparation, 2018. 



\bibitem{ShenKarim2017}
\newblock {W. Shen and K. Shikh-Khalil},
 \newblock  {Traveling Waves for a Microscopic Model of Traffic Flow},
 \newblock  \textit{DCDS (Discrete and Continuous Dynamical Systems)}, \textbf{38} (2018), 2571--2589.

\bibitem{ShenZhang}
\newblock{W. Shen and T.Y. Zhang},
\newblock{Erosion Profile by a Global Model for Granular Flow}, 
\newblock \textit{Arch. Rational Mech. Anal.} \textbf{204} (2012), pp 837-879.


\bibitem{Zumbrun1999}
  \newblock K. Zumbrun,
  \newblock On a nonlocal dispersive equation modeling particle suspensions,
  \newblock \emph{Q. Appl. Math.}, \textbf{57} {(1999)}, 573--600.

\end{thebibliography}
\end{document}